\documentclass[11pt,a4paper]{article}

\usepackage[utf8]{inputenc}
\usepackage[T1]{fontenc}
\usepackage{microtype}      
\usepackage{lmodern}             
\usepackage{amsmath,amssymb,amsthm,mathtools}
\usepackage{mathrsfs}            
\usepackage{enumitem}            
\usepackage{graphicx}             
\usepackage{hyperref}             
\usepackage{geometry}
\usepackage{bbm}                 
\usepackage{cleveref}             
\usepackage{xspace}              
\usepackage{color}
\usepackage{faktor}
\usepackage{caption}
\DeclareMathOperator{\sech}{sech}

\hypersetup{
  pdftitle={Ruling out Infinite-Time Bubbling under SO(4)-Equivariant Yang--Mills Flow on S4 x SU(2)},
  pdfauthor={Gabriel Tribe},
  colorlinks=true,
  linkcolor=blue,
  citecolor=blue,
  urlcolor=blue,
  bookmarksdepth=2
}

\numberwithin{equation}{section}  

\theoremstyle{plain} 
\newtheorem{theorem}{Theorem}[section]
\newtheorem{lemma}[theorem]{Lemma}
\newtheorem{proposition}[theorem]{Proposition}
\newtheorem{corollary}[theorem]{Corollary}

\theoremstyle{remark} 
\newtheorem{remark}[theorem]{Remark}

\title{Ruling out Infinite-Time Bubbling under $SO(4)$-Equivariant Yang--Mills Flow on $\mathbb{S}^4\times SU(2)$}
\author{Gabriel Tribe\thanks{University of Oxford, email: \texttt{gabriel.tribe@st-hughs.ox.ac.uk}.
Supported by the Heilbronn Institute for Mathematical Research and UKRI/EPSRC
through ``DA EPSRC Heilbronn Institute for Mathematical Research doctoral
training 2025'' (UKRI3009).}}
\date{October 9, 2026}

\begin{document}

\maketitle

 \begin{abstract}
We study the Yang--Mills flow on $\mathbb{S}^4$ restricted to
$\mathrm{SO}(4)$-equivariant connections on the trivial $\mathrm{SU}(2)$ bundle.
It was expected that antipodal instanton--anti-instanton configurations would lead to infinite-time singularities along the flow. We show that this does not occur. Moreover, for any smooth equivariant initial condition, the flow converges to the trivial flat connection. 
\end{abstract}

\setcounter{tocdepth}{2}
\tableofcontents
\bigskip

\section{Introduction}

\subsection{Background}

The Yang–-Mills flow is the negative $L^2$-gradient flow of the Yang–-Mills functional
$$
YM(A)=\frac 1 2\int_M |F_A|^2dV_g,
$$
whose critical points are Yang–-Mills connections. It was introduced by Atiyah and Bott as a parabolic approach to constructing Yang–-Mills connections and has become a central object of study in gauge theory and geometric analysis.

In dimension four the Yang--Mills functional is critical, the Palais--Smale
condition does not hold, and compactness may fail through bubbling.
Uhlenbeck compactness shows that a sequence of Yang--Mills connections with
uniformly bounded energy has, after passing to a subsequence and modulo gauge,
a smooth limit away from finitely many points of energy concentration.
Rescaling around these points produces bubbles, namely Yang--Mills connections on $\mathbb S^4$. Understanding the formation of such singular limits is therefore a natural problem in the study of Yang--Mills flow.

On the trivial $SU(2)$ bundle over $\mathbb S^4$, any bubble tree must have total charge zero. The simplest, lowest-energy bubbling configuration therefore consists of a charge $+1$ instanton together with a charge $-1$ anti-instanton. Among such configurations, the antipodal instanton--anti-instanton pair occupies a distinguished place, serving as a representative of the unique conformal class at the first bubbling threshold. It appears naturally in several constructions of non-minimal Yang--Mills connections; see~\cite{Parker1992NonMinimal,SibnerSibnerUhlenbeck1989,Wang1997MorseYM}. Motivated by this picture, it was expected that Yang–Mills flow started from a sufficiently concentrated antipodal instanton--anti-instanton pair would exhibit infinite-time bubbling. This expectation was explicitly stated in~\cite{Wang1997MorseYM,Sadun1994SymmetricFamily}. Our original aim was to establish such behaviour. Instead, we found that it does not occur. Every $SO(4)$-equivariant initial condition converges to the trivial connection.

From the PDE perspective the question of singularity formation in the Yang--Mills flow starts with Struwe \cite{struwe1994yangmills} where short-time existence for the Yang--Mills flow in dimension four was proved. Struwe constructed global weak solutions on four-manifolds, allowing for the possibility of finitely many singularities. Evidence that finite-time singularities should not occur came from the work of Schlatter et al. \cite{67fa8f92-32f2-39c3-bd3f-c407da60a7c7}, who proved global existence for the $SO(4)$--equivariant Yang--Mills flow on the four-ball. Their examples exhibited energy concentration occurring only in infinite-time, leading them to conjecture that Yang--Mills flow on closed four-manifolds should exist for all time. This conjecture was resolved by Waldron \cite{Waldron2019} in 2019, who proved long-time existence in the critical dimension four.

Nevertheless \emph{infinite-time} bubbling was known to occur; in Chapter 4 of Waldron's PhD thesis a radially symmetric example was constructed over the ball in $\mathbb{R}^4$. The bubbling comes from a mismatch in boundary conditions at $r=0$ and $r=1$. More recent work of  Sire et al. \cite{SireWeiZheng2022, sire2026globaloscillatorysolutionsyangmills} show infinite-time singularity formation on $\mathbb R^4$ in greater generality, as well as other exotic behaviour; we will discuss these results, and our relation to them, in subsection 1.3.

\subsection{Outline of Results}

In Section 2 we derive the reduction of the Yang--Mills flow under the $SO(4)$ cohomogeneity-one action on $\mathbb{S}^4$. Using Wang's theorem \cite{Wang1958} and a local computation for the singular orbits, we show that every smooth equivariant connection on the trivial $SU(2)$ bundle is determined by a single scalar function $a(\theta,t),\,\theta\in[0,\pi]$ satisfying the boundary behaviour of Proposition \ref{smoothextension}. The Yang--Mills flow then reduces to the scalar equation
$$
a_t
=
a_{\theta\theta}
+
\cot\theta\, a_\theta
-
\frac{4a(2a-1)(a-1)}{\sin^2\theta}.
$$
Ignoring boundary conditions, the standard BPST instanton and anti-instanton are explicit stationary solutions of this equation. We also make the surprising observation that the most natural smooth gluings of concentrated instantons and anti-instantons have Yang–Mills energy strictly below that of the ideal antipodal pair. Consequently, by energy monotonicity, these configurations cannot themselves converge to the ideal pair.

In Section 3 we prove global existence for all smooth equivariant initial data. Following ~\cite{67fa8f92-32f2-39c3-bd3f-c407da60a7c7,Waldron2014Thesis}, we remove the apparent singularities at the poles by introducing
$$
b(\theta,t)=\frac{a(\theta,t)}{\sin^2\theta},
$$
which transforms the equation into a semilinear parabolic equation on $\mathbb{S}^6$. We then construct explicit supersolutions built from glued instanton and anti-instanton profiles. These control the pole behaviour sufficiently to allow $\|b\|_\infty$ to blow up only in infinite-time, yielding long-time existence.

The remaining problem is to understand the long-time dynamics. Passing to coordinates
$$
z=\log\tan(\theta/2)\in\mathbb{R},
$$
the equation becomes
\begin{equation}\label{coshAC}
a_t=\cosh^2(z)\bigl(a_{zz}-4a(2a-1)(a-1)\bigr),
\end{equation}
which may be viewed as an Allen–Cahn equation with a spatially varying time scale. Under this correspondence, the instantons become heteroclinic fronts connecting the stable states $0$ and $1$. The main dynamical question is therefore analogous to the classical extinction-versus-propagation problem in bistable reaction–diffusion equations: do the two transition fronts eventually annihilate, or can they survive indefinitely while the intermediate plateau expands?

For the Allen–Cahn equation on $\mathbb R$, one naturally expects sufficiently decaying initial data converging to the same stable equilibrium at both spatial infinities to converge to that equilibrium as $t\to\infty$. Although the asymptotic theory of bistable reaction–diffusion equations is well developed (see, for example, Poláčik \cite{inbook}), we are unaware of a theorem establishing this conclusion for such general initial data, even in the classical Allen–Cahn setting. The relevant mechanism is metastability, first described by Carr–Pego \cite{carr1989metastable}: well-separated fronts interact only through exponentially small tails and therefore evolve on very long time scales, so solutions may remain close to multi-front configurations long before eventually converging to a homogeneous equilibrium. While such convergence appears to be the expected behaviour (see, for instance, the discussion in Alfaro et al. \cite{Alfaro_2024}), existing metastability theory typically assumes the solution is already close to a multi-front configuration. Our argument supplements these techniques with global comparison estimates, allowing us to deduce convergence from arbitrary initial data.

Section 4 analyses the flow away from the metastable regime. We construct a one-parameter family of decaying supersolutions which foliates the entire region below height $1$. Hence any solution lying strictly below $1$ converges uniformly to the trivial connection. We then prove a quantitative decay estimate for the supremum, which when combined with the pole growth from Section 3 shows that arbitrary solutions are eventually forced into a small neighbourhood of the metastable instanton--anti-instanton family.

Finally, in Section 5 we analyse the metastable regime, adapting the proof of \cite{carr1989metastable}. After introducing suitable modulation coordinates, we study the linearised operator about the metastable family and establish the existence of a distinguished small eigenvalue corresponding to the modulation direction. We then prove coercivity estimates on the orthogonal complement, allowing us to show that the error decays while the modulation parameter evolves only on a much slower timescale. This eventually forces the solution to pass below height $1$, at which point the comparison principle and the decaying supersolutions constructed earlier imply convergence to the trivial connection. We do not attempt to reproduce the full metastability framework developed for Allen--Cahn type equations; rather, we establish only the estimates required for the present problem.

In summary we prove the following.
\begin{theorem}\label{1.1}
Let $A_0$ be a smooth $SO(4)$-equivariant connection on the trivial $SU(2)$-bundle over $\mathbb{S}^4$, and let $A(t)$ denote the corresponding Yang--Mills flow with initial condition $A_0$.

Then $A(t)$ exists smoothly for all $t\ge0$, remains SO(4)-equivariant, and converges smoothly as $t\to\infty$ to the trivial flat connection. 

In particular, infinite-time bubbling does not occur within the class of smooth $SO(4)$-equivariant initial data.
\end{theorem}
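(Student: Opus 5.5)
The argument follows the outline of Sections 2--5, and the first step is to reduce Theorem \ref{1.1} to a statement about the scalar function $a(\theta,t)$. By the equivariant reduction (Wang's theorem together with Proposition \ref{smoothextension}), every smooth $SO(4)$-equivariant connection on the trivial bundle is encoded by a profile $a$ with the prescribed pole behaviour. Uniqueness of Yang--Mills flow implies that the flow preserves equivariance, since the flow from $g^*A_0$ is $g^*A(t)$. It therefore suffices to solve
\[
a_t=a_{\theta\theta}+\cot\theta\,a_\theta-\frac{4a(2a-1)(a-1)}{\sin^2\theta}
\]
and show $a\to 0$ smoothly, in the sense that $b=a/\sin^2\theta\to 0$ in $C^\infty(\mathbb S^6)$.

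\emph{Long-time existence.} Set $b=a/\sin^2\theta$, which satisfies a semilinear heat equation on $\mathbb S^6$ with cubic nonlinearity. Smooth solutions continue as long as $\|b\|_\infty$ stays bounded on finite intervals. I will bound $b$ on each $[0,T]$ by comparison with the glued instanton/anti-instanton supersolutions of Section 3, whose concentration scale degenerates only as $t\to\infty$. Parabolic regularity then gives smoothness for all $t\ge 0$.

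\emph{Convergence.} Pass to $z=\log\tan(\theta/2)$, so the equation becomes the weighted Allen--Cahn equation \eqref{coshAC}. Using the one-parameter family of decaying supersolutions of Section 4, which foliates $\{a<1\}$, the comparison principle shows that if $\sup_z a(\cdot,t_0)<1$ at some time $t_0$, then $a\to0$ uniformly. Uniform convergence upgrades to smooth convergence of $b$ via the Section 3 pole control and parabolic estimates on $\mathbb S^6$. It remains to exclude solutions with $\sup a\ge 1$ for all time. The quantitative decay of the supremum (Section 4), combined with the pole growth from Section 3 and energy monotonicity, forces such a solution into an arbitrarily small neighbourhood of the instanton--anti-instanton family: two fronts at positions $\pm\xi$ joining $0$ to a plateau near $1$. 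Energy monotonicity enters because the energy is nonincreasing and is bounded below by the energy of the ideal antipodal pair whenever two separated fronts are present. In that regime, Section 5 applies. One writes $a=\Phi_{\xi(t)}+w$ with $w$ orthogonal to the small-eigenvalue direction and uses coercivity of the linearisation on the complement to obtain decay of $w$. The modulation equation then has $\dot\xi$ of sign driven by the exponentially small tail interaction, which is attractive between an instanton and an anti-instanton. Hence $\xi$ decreases, the fronts annihilate in finite time, $\sup a$ drops below $1$, and the first case applies.

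\emph{Main obstacle.} The hard step is the metastable analysis. The weight $\cosh^2 z$ is unbounded, so the linearised operator is not a standard Allen--Cahn operator, and the spectral gap and coercivity must be established in a weighted $L^2$ space adapted to $\cosh^{-2}z\,dz$. I will first try to establish coercivity for a single front, where the kernel is spanned by the derivative of the front, and then patch the two fronts with cutoffs, controlling the cross terms by $e^{-c\xi}$. A second, related difficulty is ensuring the solution actually enters the basin where modulation is valid, so that it cannot hover indefinitely near the ideal pair. For this I will compare the energy with the strictly smaller energies of the smooth gluings noted in Section 2, which rules out convergence to the ideal configuration.
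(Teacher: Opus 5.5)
There is a genuine gap at the step where you assert that supremum decay, pole growth and energy monotonicity force a solution with $\sup a\ge 1$ into a small neighbourhood of the instanton--anti-instanton family. Lemma~\ref{supdecaylem} and Corollary~\ref{longtimeexist} are one-sided. They bound $a$ from above, and otherwise give only the crude bound $a\ge -C\sin^2\theta$. Nothing stops $a$ from lying well below $u_\lambda$ on the plateau, so you never get the two-sided estimate $\|a-u_\mu\|_{L^\infty}<\mu^\delta$ that modulation coordinates require (Lemma~\ref{tubeee_eta}).

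Energy cannot close this. Your claim that two separated fronts carry at least the ideal-pair energy is false: the paper notes $YM(u_\lambda)<16\pi^2$. Conversely, there are data concentrating on the ideal pair with energy above $16\pi^2$. Comparing with the energies of the natural gluings therefore cannot exclude convergence to the ideal configuration for general $A_0$, and that exclusion is essentially the theorem itself.

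The missing idea is to replace closeness by comparison. Lemma~\ref{perturbboundlem} gives $a(\cdot,T_\lambda)\le u_\lambda+\lambda^\alpha\psi_\lambda$. The paper then runs the whole metastability analysis only on the solution $\bar a$ starting from this explicit barrier. That barrier lies in the tube by construction, with $\lambda(0)=\lambda_0$ and $v(0)=\lambda_0^\alpha\psi_{\lambda_0}$ (Remark~\ref{initialscalerem}). Once $\sup\bar a(\cdot,t_*)<1$, comparison gives $\sup a<1$, and the decaying foliation finishes the argument.

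Passing to the barrier also supplies a symmetry that your direct approach needs and lacks. The linearisation about $u_\lambda$ has two approximate kernel directions, $q(z+s)\pm q(s-z)$. The odd one is $\partial_z u_\lambda$, reflecting translation invariance of the energy in $z$. It is automatically $\langle\cdot,\cdot\rangle_w$-orthogonal to any even tangent vector, yet it has $\mathcal E_\lambda=o(1)$ at unit $H^1$ norm. So for a non-symmetric perturbation, a single symmetric parameter $\xi$ with one orthogonality condition cannot give coercivity. Lemma~\ref{twofrontunweightedlem} uses evenness exactly so that one condition controls both front coefficients. The barrier datum is symmetric, and the flow preserves that symmetry.

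The mechanism you invoke for dropping below $1$ is also not the right one. No annihilation is needed, since $\max u_\lambda=(1-\lambda^2)/(1+\lambda^2)=1-2\lambda^2+O(\lambda^4)$. By time $t_*\sim|\log\lambda_0|$ the error has decayed to $\|v\|_{L^\infty}=O(\lambda_0^3)$, while Lemma~\ref{logscaledrift} keeps $\lambda(t)=(1+o(1))\lambda_0$. Following the fronts until they annihilate would take you out of the small-$\lambda$ regime where the coercivity estimates hold, so as stated that step is unjustified as well as unnecessary.

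Finally, the Section~3 pole bound grows like $e^{c_{\mathrm{pole}}t}$ and cannot give $b\to0$. The paper instead uses $u_\lambda/\sin^2\theta\le(1-\lambda^4)/(4\lambda^2)$ along the foliation, together with the lower barrier $-Ce^{-6(t-t_0)}\sin^2\theta$.
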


\subsection{Comparisons and Outlook}
Recent work has shed new light on the first bubbling threshold for the trivial bundle on $S^4$. In the normalisation of \cite{dayaprema2026parabolicgaptheoremsyangmills}, a unit instanton has energy $4\pi^2$, so the ideal instanton--anti-instanton pair occurs at energy $8\pi^2$. Dayaprema and Waldron \cite{dayaprema2026parabolicgaptheoremsyangmills} proved that if
$$
YM(A_0)<4\pi^2(|\kappa(E)|+2),
$$
then Yang--Mills flow exists for all time and converges smoothly to an instanton. In particular, on the trivial bundle every flow with initial energy strictly below that of an instanton--anti-instanton pair converges to the flat connection. At the threshold itself, Waldron and Yin \cite{waldron2026yangmillsequationnearinstantonantiinstanton} showed that certain instanton--anti-instanton bubble trees cannot occur; in particular, a bubble tree containing a single self-dual unit-charge bubble and otherwise anti-self-dual bubbles is impossible.

Our result is complementary to these works. Together with \cite{waldron2026yangmillsequationnearinstantonantiinstanton}, it suggests unexpected rigidity at the first bubbling level for the trivial bundle. While Waldron and Yin establish that certain instanton--anti-instanton bubble configurations cannot arise as limits of sequences of Yang–Mills connections, we find a related rigidity at the level of the Yang–Mills flow, highlighting the possibility that the ideal antipodal configuration may likewise be inaccessible as an infinite-time singularity. More broadly, the relationship between bubble configurations, their sequential realisation, and their dynamical realisation under Yang–Mills flow remains largely unexplored and would be interesting to understand in greater generality. For example, suppose one could remove the symmetry assumptions in our result to show that no ideal instanton--anti-instanton pair can arise as an infinite-time singularity of Yang--Mills flow on the trivial bundle over $\mathbb S^4$. Such a result might extend the parabolic gap phenomenon beyond the energy range covered by \cite{dayaprema2026parabolicgaptheoremsyangmills}. In that case, the first non-minimal Yang--Mills connection constructed by Sibner, Sibner and Uhlenbeck \cite{SibnerSibnerUhlenbeck1989} would become the natural candidate for the first non-trivial asymptotic limit. Indeed, Waldron and Yin conjecture that the corresponding $2$-equivariant solution determines the only non-zero critical value below $16\pi^2$ (four times the instanton energy in their normalisation). It is noteworthy that the minimax construction of Sibner, Sibner and Uhlenbeck does not appear to extend to the $1$-equivariant setting corresponding to the antipodal pair. Their construction relies on the presence of a distinguished curvature direction in the basic $m$-instanton for $m>1$, which allows the construction of a non-trivial minimax loop below the bubbling threshold; no such direction exists when $m=1$, a feature that is suggestively consistent with the rigidity of the antipodal ideal pair observed above.

Regarding the removal of the symmetry assumptions, it is pertinent that Otto and Reznikoff \cite{OttoReznikoff2006} describe metastability in an abstract gradient-flow framework in terms of an energy--energy--dissipation inequality expressing coercivity transverse to a slow manifold, together with small variation of the energy along that manifold. This suggests a possible route to the non-equivariant problem, with the slow manifold given by well-separated instanton--anti-instanton configurations modulo gauge. The main analytic task would be to establish a \L ojasiewicz--Simon estimate at the bubble-tree boundary. This would extend estimates in the smooth case for Yang--Mills (see Appendix B of \cite{dayaprema2026parabolicgaptheoremsyangmills} for a modern account) and would be akin to results in the harmonic map setting such as \cite{Rupflin2024Lojasiewicz}.

We now discuss the relation with other literature on infinite-time singularity formation under Yang--Mills flow. In 2022 Sire et al. \cite{SireWeiZheng2022} were able to construct initial data bubbling at any given point in $\mathbb{R}^4$. Moreover, they showed the existence of bubble trees at infinite-time. More recently \cite{sire2026globaloscillatorysolutionsyangmills}, they uncovered an even richer range of asymptotic behaviours in the $SO(4)$-equivariant setting, including solutions whose scales oscillate indefinitely as $t\to\infty$. In the equivariant reduction considered in \cite{sire2026globaloscillatorysolutionsyangmills}, the flow is governed by
$$
u_t=u_{rr}+\frac1r u_r-\frac{2}{r^2}u(u-1)(u-2).
$$
Passing to logarithmic coordinates $z=\log r$ yields
$$
u_t=e^{-2z}\Bigl(u_{zz}-2u(u-1)(u-2)\Bigr),
$$
The resulting equation retains a distinguished non-compact end corresponding to spatial infinity, and the constructions in \cite{sire2026globaloscillatorysolutionsyangmills} exploit slowly decaying tails in that region to influence the long-time dynamics. By contrast, the compact geometry of $\mathbb{S}^4$ leads to the weighted equation \eqref{coshAC}, whose coefficients grow symmetrically at both ends. This reflects the absence of a spatial infinity in the present problem and suggests that the mechanism underlying the oscillatory solutions of \cite{sire2026globaloscillatorysolutionsyangmills} is not available in the closed setting.

At the same time, our convergence result should not be interpreted as evidence that infinite-time bubbling cannot occur on compact four-manifolds. Subsequent to the writing of this paper we found footnote eleven in Waldron's thesis \cite{Waldron2014Thesis} which, drawing on earlier constructions of Parker \cite{Parker1992NonMinimal}, proposed a singularity mechanism in the SU(2)-equivariant setting on $\mathbb S^4$. There, the reduced system admits asymptotic conditions which exclude the constant vacuum states and hence prevent convergence to the obvious smooth minimizers of the reduced problem. Waldron argues that the resulting mismatch is resolved through the formation of a non-trivial bubble tree, yielding a compact example of infinite-time singularity formation despite the existence of the trivial minimizer on the underlying bundle. By contrast, no such obstruction is present in the SO(4)-equivariant problem considered here: the trivial connection is compatible with the asymptotic data, and the flow converges to it.

The equivariant reduction viewpoint shows that different symmetry ans\"{a}tze lead to different reaction–diffusion systems exhibiting potentially radically different asymptotic behaviours. Waldron’s proposed singularity arises from asymptotic conditions which exclude the vacuum states of the reduced system. However, the reaction–diffusion literature suggests that such a mechanism for singularity formation is far from unique. In unbalanced bistable equations, the stable equilibria remain admissible, yet travelling fronts persist and invade one phase by another \cite{Fukao2004}. In more complicated systems, one encounters propagating terraces \cite{alma992983579365301631}, namely hierarchies of fronts evolving on distinct scales. Waldron’s bubble-tree picture appears closely analogous to this phenomenon. It would therefore be interesting to determine whether compact four-manifolds admit examples of infinite-time bubbling even when the reduced boundary conditions are compatible with a smooth minimizer.

More broadly, reaction–diffusion systems are known to exhibit a wide variety of exotic asymptotic behaviours at infinite-time, including oscillatory and non-quasiconvergent dynamics \cite{inbook}. This raises the possibility that suitably chosen compact equivariant reductions may also support genuinely exotic infinite-time behaviour.

\section{Geometric Setting}
\subsection{Cohomogeneity-one Ansatz}
Write the four-sphere as
$$
\mathbb{S}^4
=
\Bigl\{
(\cos\theta,\sin\theta\,\omega)
\in \mathbb{R}\times\mathbb{R}^4
\;\Big|\;
\theta\in[0,\pi],\;
\omega\in \mathbb{S}^3
\Bigr\},
$$
equipped with the metric
$$
g=d\theta^2+\sin^2\theta\,g_{\mathbb S^3}.
$$
Define
$$
\gamma:SO(4)\times\mathbb S^4\longrightarrow\mathbb S^4
$$
by
$$
\gamma\bigl(B,(\cos\theta,\sin\theta\,\omega)\bigr)
=
(\cos\theta,\sin\theta\,B\omega).
$$
For each $B\in SO(4)$, write $\gamma_B$ for the associated
diffeomorphism. The action preserves the coordinate $\theta$ and acts
transitively on the $\mathbb S^3$-factor. In particular, each level set
$\{\theta=\mathrm{const}\}$ is an $SO(4)$-orbit. For
$\theta\in(0,\pi)$, the stabiliser of
$$
p=(\cos\theta,\sin\theta\,\omega)
$$
is
$$
\{B\in SO(4)\mid B\omega=\omega\}\cong SO(3).
$$
Such orbits are called \emph{principal} and are diffeomorphic to
$SO(4)/SO(3)\simeq\mathbb S^3$. At the endpoints
$\theta=0,\pi$, the $\mathbb S^3$-factor collapses to a point, so the
stabiliser is the entire group $SO(4)$. These give rise to two
\emph{singular} orbits. We thus obtain a decomposition of
$\mathbb S^4$ into $SO(4)$-orbits, with orbit space naturally
identified with the interval $[0,\pi]$.

Our aim is to study equivariant connections on the trivial
principal $SU(2)$-bundle over $\mathbb S^4$. It is convenient to
describe the equivariant structure using the double cover
$$
\rho:
\operatorname{Spin}(4)
=
SU(2)_L\times SU(2)_R
\longrightarrow SO(4).
$$
Identifying $\mathbb R^4$ with the quaternions $\mathbb H$, the lifted
action on the base is
$$
(q_L,q_R)\cdot
(\cos\theta,\sin\theta\,\omega)
=
\bigl(
\cos\theta,
\sin\theta\,q_L\omega q_R^{-1}
\bigr).
$$
This action factors through $\rho$ and hence induces the standard
$SO(4)$-action described above.
For $\theta\in(0,\pi)$, the principal isotropy subgroup is the diagonal
subgroup
$$
\widetilde H
=
\{(q,q)\mid q\in SU(2)\}
\simeq SU(2),
$$
whereas the singular isotropy subgroup at either pole is the whole group
$$
\widetilde H_-=\widetilde H_+
=
SU(2)_L\times SU(2)_R.
$$
We briefly recall the equivariant bundle construction. Let $G$ act
transitively on $G/H$, and suppose that a principal $K$-bundle over
$G/H$ carries a lift of the $G$-action commuting with the principal
right $K$-action. After choosing a point in the fibre over the identity
coset, one obtains an isotropy homomorphism
$$
\lambda:H\longrightarrow K,
$$
well defined up to conjugation. By Wang's theorem \cite{Wang1958}, the
equivariant principal $K$-bundles over $G/H$ are classified by
conjugacy classes of such homomorphisms.
In the cohomogeneity-one setting, an equivariant bundle is specified by
singular isotropy homomorphisms
$$
\lambda_-:\widetilde H_-\longrightarrow K,
\qquad
\lambda_+:\widetilde H_+\longrightarrow K,
$$
whose restrictions to the principal isotropy subgroup agree. For the
trivial $SU(2)$-bundle considered here, we choose the same projection
at the two singular orbits:
\begin{equation}\label{singularisotropy}
\widetilde\lambda_-
=
\widetilde\lambda_+
=
\operatorname{pr}_R,
\qquad
\widetilde\lambda_\pm(q_L,q_R)=q_R.
\end{equation}
Their common restriction to $\widetilde H$ is
$$
\widetilde\lambda_H:\widetilde H\longrightarrow SU(2),
\qquad
\widetilde\lambda_H(q,q)=q.
$$

Let us verify explicitly that this choice gives the trivial principal bundle.
The homogeneous bundle over a principal orbit is
$$
\operatorname{Spin}(4)\times_{\widetilde H}SU(2),
$$
where
$$
(g,k)\sim
\bigl(gh,\widetilde\lambda_H(h)^{-1}k\bigr).
$$
Each extension $\widetilde\lambda_\pm$ determines a trivialisation over
the common boundary of the two singular neighbourhoods by
$$
[g,k]\longrightarrow
\bigl(g\widetilde H,\widetilde\lambda_\pm(g)k\bigr).
$$
The transition function between these trivialisations is therefore
$$
c:\operatorname{Spin}(4)/\widetilde H\longrightarrow SU(2),
\qquad
c(g\widetilde H)
=
\widetilde\lambda_+(g)
\widetilde\lambda_-(g)^{-1}.
$$
Since the two singular isotropy homomorphisms in
\eqref{singularisotropy} agree, one has $c\equiv 1$. Thus the clutching map is null-homotopic and the resulting principal
$SU(2)$-bundle has $c_2(P)=0$. It is therefore isomorphic to the trivial bundle
$$
P:=\mathbb S^4\times SU(2).
$$

We now describe the class of equivariant connections that will be studied
throughout the remainder of the paper. Strictly speaking, the equivariant structure constructed above is a lift of
the $\operatorname{Spin}(4)$-action on $\mathbb S^4$, rather than an honest
$SO(4)$-action on the principal bundle. Indeed, the nontrivial element of
$$
\ker\!\bigl(\operatorname{Spin}(4)\rightarrow SO(4)\bigr)
=
\{(1,1),(-1,-1)\}
$$
acts trivially on the base but by multiplication by the central element
$-1\in SU(2)$ on each fibre. Thus the lifted action does not descend to a
genuine $SO(4)$-action on $P$. This distinction disappears at the level of connections i.e. the induced action of
$\operatorname{Spin}(4)$ on the space of connections factors through
$SO(4)$. Indeed, since
$$
\operatorname{Ad}_{-1}
=
\mathrm{id}_{\mathfrak{su}(2)},
$$
the constant gauge transformation determined by $-1$ fixes every
connection. Throughout this paper, we shall therefore call a connection
\emph{$SO(4)$-equivariant} if it is invariant under the lifted
$\operatorname{Spin}(4)$-action. Equivalently, if
$\widetilde B=(q_L,q_R)\in\operatorname{Spin}(4)$ is any lift of
$B\in SO(4)$, then
\begin{equation}
A(p,v)
=
\operatorname{Ad}_{q_R^{-1}}
\Bigl(
A\bigl(
\gamma_{\widetilde B}(p),
d\gamma_{\widetilde B}(p)v
\bigr)
\Bigr).
\end{equation}

The symmetry assumption imposes strong restrictions on the form of an
equivariant connection. To make these explicit, we decompose the
connection into its radial and orbital components with respect to the
cohomogeneity-one decomposition of $\mathbb S^4$. Using $\mathbb S^4\setminus\{\text{poles}\}
\simeq
(0,\pi)\times\mathbb S^3$ write
$$
A(\theta,\omega)
=
A_\theta(\theta,\omega)\,d\theta
+
A_{\mathrm{orb}}(\theta,\omega),
$$
where
$$
A_\theta(\theta,\cdot)
\in
\Gamma(\operatorname{ad}P),
\qquad
A_{\mathrm{orb}}(\theta,\cdot)
\in
\Omega^1(\mathbb S^3,\mathfrak{su}(2)).
$$
\begin{lemma}
    Let $A$ be an $SO(4)$-equivariant connection on $P$. Then for every $\theta\in(0,\pi)$ the radial component
    vanishes on the principal orbit:
    $$
    A_\theta(\theta,\omega)\equiv 0\qquad\text{for all }\omega\in \mathbb{S}^3.
    $$
\end{lemma}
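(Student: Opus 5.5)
The plan is to use the principal isotropy group. Fix $\theta\in(0,\pi)$ and a point $p=(\cos\theta,\sin\theta\,\omega)$ of the principal orbit. By transitivity of the lifted action on the orbit $\{\theta\}\times\mathbb S^3$, the equivariance identity shows that the value $A_\theta(\theta,\omega)$ at one point determines it everywhere on the orbit, up to the adjoint action of $q_R^{-1}$. In particular, $A_\theta(\theta,\cdot)\equiv0$ follows from $A_\theta(\theta,\omega_0)=0$ at a single point. I would take $\omega_0=1\in\mathbb H$. The stabiliser of $\omega_0$ is the diagonal $\widetilde H=\{(q,q)\}$.

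Next, apply the equivariance condition with $\widetilde B=(q,q)\in\widetilde H$ and $v=\partial_\theta$. Since $\gamma_{\widetilde B}$ preserves $\theta$ and fixes $\omega_0$, we have $\gamma_{\widetilde B}(p)=p$. Moreover $d\gamma_{\widetilde B}(p)\partial_\theta=\partial_\theta$, because the radial curve $s\mapsto(\cos s,\sin s\,\omega_0)$ is mapped to $(\cos s,\sin s\,q\omega_0q^{-1})=(\cos s,\sin s\,\omega_0)$. The equivariance identity then gives
$$
A_\theta(\theta,\omega_0)=\operatorname{Ad}_{q^{-1}}A_\theta(\theta,\omega_0)\qquad\text{for all }q\in SU(2).
$$
So $A_\theta(\theta,\omega_0)\in\mathfrak{su}(2)$ is fixed by the whole adjoint action of $SU(2)$. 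This action is the standard $SO(3)$ rotation action on $\mathfrak{su}(2)\cong\mathbb R^3$, which has no nonzero fixed vectors; equivalently, $\mathfrak{su}(2)$ has trivial centre. Hence $A_\theta(\theta,\omega_0)=0$, and by the first step $A_\theta\equiv0$ on the whole orbit.

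The main point needing care is the interpretation of the equivariance formula. Here $A$ is a connection rather than an $\operatorname{ad}P$-valued form, so one has to check that the stated identity is meaningful in the fixed trivialisation $P=\mathbb S^4\times SU(2)$. One must also check that the lifted action on the fibre is exactly multiplication by $q_R$, so that no extra Maurer--Cartan term $g^{-1}dg$ appears. In the trivialisation induced by $\widetilde\lambda_\pm=\operatorname{pr}_R$, the fibre action is by the constant $q_R$ for each fixed group element. A constant gauge transformation contributes no derivative term, so the connection transforms tensorially, exactly as in the displayed identity. With that checked, the argument above is purely representation-theoretic and uses no analysis.
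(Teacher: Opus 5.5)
Your proof is correct and follows the paper's argument. You use transitivity to reduce to a single point of the orbit, then use that the principal isotropy $\widetilde H$ fixes both $p$ and $\partial_\theta$ while $\widetilde\lambda(\widetilde H)=SU(2)$. This forces $A_\theta(\theta,\omega_0)$ to be $\operatorname{Ad}_{SU(2)}$-invariant and hence zero.

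Applying $(q,q)$ directly at $\omega_0$, rather than comparing the lifts $\widetilde B$ and $\widetilde B\widetilde h$ as the paper does, is the same step. Your check that the lifted fibre action is left multiplication by the constant $q_R$, so that no $g^{-1}dg$ term arises, correctly justifies the transformation law that the paper simply takes as its definition of equivariance.
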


\begin{proof}
    Fix $\theta\in(0,\pi)$ and choose a reference point
    $\omega_0\in \mathbb{S}^3$. As $SO(4)$ acts transitively on the principal
    orbit $\mathbb{S}^3$, for any $\omega\in \mathbb{S}^3$ there exists
    $B\in SO(4)$ with $\gamma_B(\theta,\omega_0)=(\theta,\omega)$.
    Let $\widetilde B=(q_L,q_R)\in\operatorname{Spin}(4)$ be any lift of $B$.
    Equivariance of $A$ applied to the vector
    $\partial_\theta$ gives
    $$
    A_\theta(\theta,\omega)
    =
    \operatorname{Ad}_{q_R^{-1}}\bigl(A_\theta(\theta,\omega_0)\bigr),
    $$
    since $d\gamma_B(\partial_\theta)=\partial_\theta$.

    If $h\in H_0:=\mathrm{Stab}_{SO(4)}(\omega_0)$, then $B$ and $Bh$
    both map $(\theta,\omega_0)$ to $(\theta,\omega)$. Lift $h$ to
    $\widetilde h\in\widetilde H_0:=\rho^{-1}(H_0)\subset
    SU(2)_L\times SU(2)_R$. Comparing the two expressions above for
    $\widetilde B$ and for $\widetilde B\widetilde h$ shows that
    $$
    \operatorname{Ad}_{\widetilde\lambda(\widetilde h)^{-1}}
    \bigl(A_\theta(\theta,\omega_0)\bigr)
    =
    A_\theta(\theta,\omega_0)
    \qquad\text{for all }\widetilde h\in\widetilde H_0.
    $$

    Thus $A_\theta(\theta,\omega_0)$ is fixed by the adjoint action of
    $\widetilde\lambda(\widetilde H)$. Since
    $$
    \widetilde\lambda:\widetilde H\longrightarrow SU(2),
    \qquad
    (q,q)\longmapsto q,
    $$
    is surjective, $A_\theta(\theta,\omega_0)$ is fixed by
    $\operatorname{Ad}_{SU(2)}$. Hence $A_\theta(\theta,\omega_0)$ is in the centre of $\frak{su}(2)$ which is trivial. The equivariance relation above
    implies $A_\theta(\theta,\omega)=0$ for all
    $\omega\in\mathbb S^3$.
\end{proof}

\noindent
We now describe the orbital part $A_{\mathrm{orb}}$ using the following theorem, written at the level of the double cover.

\begin{theorem}[Wang's Theorem \cite{Wang1958}]
Decompose
$$
\mathfrak{spin}(4)=\widetilde{\mathfrak h}\oplus\mathfrak m
$$
with $\mathfrak m$ an
$\operatorname{Ad}(\widetilde H)$-invariant complement of
$\widetilde{\mathfrak h}$. Then the orbital parts
$A_{\mathrm{orb}}$ of $SO(4)$-equivariant connections $A$ on
$P$ are in one-to-one correspondence with linear maps
$$
M:\mathfrak m\longrightarrow\mathfrak{su}(2)
$$
satisfying the intertwining condition
\begin{equation}\label{intertwiner}
M\bigl(\operatorname{Ad}_{\widetilde h}X\bigr)
=
\operatorname{Ad}_{\widetilde\lambda(\widetilde h)}
\bigl(M(X)\bigr)
\qquad
\forall\,\widetilde h\in\widetilde H,\;
X\in\mathfrak m.
\end{equation}
\end{theorem}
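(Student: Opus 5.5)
The correspondence will be built by evaluating the orbital part at a single reference point and transporting it around the orbit with the symmetry. Fix $\theta\in(0,\pi)$ and the reference point $p_0=(\cos\theta,\sin\theta\,\omega_0)$ with $\omega_0=1\in\mathbb H$, so that its isotropy group is exactly the diagonal $\widetilde H$. The orbit map $\pi:\operatorname{Spin}(4)\to\operatorname{Spin}(4)/\widetilde H\simeq\mathbb S^3$, $g\mapsto g\cdot p_0$, has differential at the identity whose kernel is $\widetilde{\mathfrak h}$. Hence $d\pi_e$ restricts to a linear isomorphism $\mathfrak m\to T_{p_0}(\text{orbit})$.

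Given an equivariant $A$, define
$$
M(X):=A_{\mathrm{orb}}\bigl(p_0,\,d\pi_e X\bigr)\in\mathfrak{su}(2),\qquad X\in\mathfrak m.
$$
To check the intertwining condition \eqref{intertwiner}, apply the equivariance relation with $\widetilde B=\widetilde h\in\widetilde H$. Since $\widetilde h$ fixes $p_0$, and $d\gamma_{\widetilde h}(p_0)\,d\pi_e X=d\pi_e(\operatorname{Ad}_{\widetilde h}X)$ (because $\gamma_{\widetilde h}\circ\pi=\pi\circ C_{\widetilde h}$, with $C_{\widetilde h}$ conjugation), we get
$$
M(X)=\operatorname{Ad}_{\widetilde\lambda(\widetilde h)^{-1}}M(\operatorname{Ad}_{\widetilde h}X),
$$
which is \eqref{intertwiner}. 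The subspace $\mathfrak m$ is preserved by $\operatorname{Ad}_{\widetilde h}$ by hypothesis, so the expression makes sense. Injectivity of $A_{\mathrm{orb}}\mapsto M$ follows as in the preceding Lemma. Equivariance determines $A_{\mathrm{orb}}(g\cdot p_0,\cdot)$ from $A_{\mathrm{orb}}(p_0,\cdot)$ by $\operatorname{Ad}_{\widetilde\lambda_R(g)}$ and pushforward by $d\gamma_g$. So $A_{\mathrm{orb}}$ is determined on the whole orbit by $M$.

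For surjectivity, given an intertwiner $M$, define
$$
A_{\mathrm{orb}}(g\cdot p_0,\,d\gamma_g\,d\pi_e X):=\operatorname{Ad}_{q_R(g)}M(X).
$$
The main obstacle is checking that this is well defined. Replacing $g$ by $g\widetilde h$ and $X$ by $\operatorname{Ad}_{\widetilde h^{-1}}X$ describes the same point and vector. The value is then unchanged precisely by \eqref{intertwiner}, together with the fact that $\widetilde\lambda$ is a homomorphism, so $q_R(g\widetilde h)=q_R(g)\widetilde\lambda(\widetilde h)$. The sign ambiguity $\pm1$ of the lift is harmless since $\operatorname{Ad}_{-1}=\mathrm{id}$.

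It remains to confirm that the resulting object is a smooth one-form on the orbit and is equivariant. Smoothness follows from using smooth local sections of $\pi$. Equivariance holds by construction. Combining this with the Lemma, which kills $A_\theta$, $A$ is then reconstructed on each principal orbit. Smooth dependence on $\theta$ is a separate matter and is not part of the bijection claimed here.

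In the paper's language this is the standard statement that invariant connections on the homogeneous bundle $\operatorname{Spin}(4)\times_{\widetilde H}SU(2)$ restricted to an orbit correspond to $\widetilde\lambda$-intertwiners on $\mathfrak m$. I would phrase the argument directly as above rather than via the canonical invariant connection, since the bundle here is trivialised explicitly.
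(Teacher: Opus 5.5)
The paper does not prove this statement. It is quoted from Wang and used only to reduce to the one-dimensional space $\operatorname{Hom}_{\widetilde H}(\mathfrak m,\mathfrak{su}(2))$. Your argument is a correct, self-contained proof, but it does not follow Wang's route.

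Wang works on an arbitrary homogeneous bundle $G\times_\lambda K$. He encodes an invariant connection form $\omega$ by $\Lambda(X)=\omega_{u_0}(\widetilde X_{u_0})$ on fundamental vector fields of the lifted action, with $\Lambda|_{\widetilde{\mathfrak h}}=d\widetilde\lambda$. The intertwiner is $\Lambda|_{\mathfrak m}$, so connections are measured against the canonical invariant connection.

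You instead use that here $\widetilde\lambda$ extends to the homomorphism $\operatorname{pr}_R$ on all of $\operatorname{Spin}(4)$. This makes the product connection invariant, and the problem becomes classifying invariant $\mathfrak{su}(2)$-valued one-forms on $\operatorname{Spin}(4)/\widetilde H$ by their value at $p_0$.

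The two correspondences genuinely differ. In the paper's trivialisation, Wang's map is $X\mapsto A(p_0,d\pi_eX)+d\operatorname{pr}_R(X)$, i.e.\ your $M$ shifted by the fixed intertwiner $d\operatorname{pr}_R|_{\mathfrak m}$. For example, take $\mathfrak m=\{(Y,-Y)\}$ and $A_{\mathrm{orb}}=a\sum_iT_i\eta^i$. Yours is $(Y,-Y)\mapsto 2aY$, while Wang's is $(Y,-Y)\mapsto(2a-1)Y$, which vanishes at the canonical connection $a=\tfrac12$. Both are bijections onto the same space, so either proves the statement as phrased.

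Your version buys linearity: $A_{\mathrm{orb}}=0$ corresponds to $M=0$. That is exactly what the paper silently uses when it deduces $A_{\mathrm{orb}}=a(\theta)\sum_iT_i\eta^i$ from Schur's lemma. Wang's version buys generality, since it needs no extension of the isotropy homomorphism to the whole group.

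One small point. Read literally, the statement concerns orbital parts of connections defined on all of $P$, so surjectivity also needs each orbit-level form to come from a smooth global equivariant connection. This follows at once from your construction, which in the coordinate $\omega$ does not depend on $\theta$. Multiplying it by a cutoff $\chi(\theta)$ supported in $(0,\pi)$ gives a smooth equivariant connection on $\mathbb S^4$.
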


\noindent
We denote by
$$
\operatorname{Hom}_{\widetilde H}
\bigl(\mathfrak m,\mathfrak{su}(2)\bigr)
$$
the vector space of $\widetilde H$-equivariant linear maps
$M:\mathfrak m\to\mathfrak{su}(2)$ satisfying
\eqref{intertwiner}. To make this concrete choose an orthonormal basis $\{e_i\}_{i=1}^3$
of $\mathfrak m$ and a left-invariant coframe $\{\eta^i\}_{i=1}^3$
on the unit $\mathbb S^3$ satisfying
\begin{equation}
d\eta^i=-\epsilon_{ijk}\eta^j\wedge\eta^k,
\qquad
\epsilon_{123}=+1.
\end{equation}
Let $\{T_i\}_{i=1}^3$ be a basis of $\mathfrak{su}(2)$ with
$[T_i,T_j]=2\epsilon_{ijk}T_k$ and
$\langle T_i,T_j\rangle=4\delta_{ij}$\footnote{This normalisation corresponds to taking Hamilton's quaternionic generators and scaling the Lie algebra inner product so that the energy of an instanton is $8\pi^2$.}. Let
$$
\eta:=\operatorname{Im}(\overline{\omega}\,d\omega)
$$
be the $\mathfrak{su}(2)$-valued left Maurer--Cartan form on
$SU(2)\cong\mathbb S^3$. With respect to the basis
$\{T_i\}_{i=1}^3$, this is precisely
$$
\eta=\sum_{i=1}^3T_i\,\eta^i.
$$ 

Under the identification of $\widetilde H$ with $SU(2)$, the modules
$\mathfrak m$ and $\mathfrak{su}(2)$ are isomorphic as real
$\widetilde H$-representations (both realise the
three-dimensional adjoint representation). By the real version of
Schur's lemma, the space
$$
\operatorname{Hom}_{\widetilde H}
(\mathfrak m,\mathfrak{su}(2))
$$
is one-dimensional. Via the bases $\{e_i\}$ and
$\{T_i\}$, choose the intertwiner sending
$e_i$ to $T_i$. Every element of
$\operatorname{Hom}_{\widetilde H}
(\mathfrak m,\mathfrak{su}(2))$
is then a scalar multiple of this map. Therefore there exists a scalar
function $a(\theta)$ such that, for each
$\theta\in(0,\pi)$,
\begin{equation}\label{ansatz}
A_{\mathrm{orb}}(\theta,\omega)
=
a(\theta)\sum_{i=1}^3T_i\,\eta^i(\omega).
\end{equation}
In other words, any $SO(4)$-equivariant connection is determined by a
single scalar function $a(\theta)$ on the orbit space $[0,\pi]$.

\subsection{Smooth Extension to the Singular Orbits}

We now determine the conditions under which the ansatz
\eqref{ansatz} extends smoothly across the singular orbits.

Fix the north pole and introduce the normal coordinate
$$
x=\sin\theta\,\omega\in\mathbb H,
\qquad
r=|x|=\sin\theta.
$$
Writing $x=r\omega$,
$$
dx=dr\,\omega+r\,d\omega,
$$
and therefore
$$
\overline x\,dx
=
r\,dr+r^2\overline\omega\,d\omega.
$$
Taking imaginary parts gives
$$
\operatorname{Im}(\overline x\,dx)
=
r^2\eta.
$$
Hence
$$
A
=
\frac{a(\theta)}{\sin^2\theta}
\operatorname{Im}(\overline x\,dx).
$$

Since
$\operatorname{Im}(\overline x\,dx)$ is a smooth polynomial
$\mathfrak{su}(2)$-valued one-form on $\mathbb R^4$, the connection
extends smoothly across the north pole if and only if the coefficient function 
$$
b(\theta):=\frac{a(\theta)}{\sin^2\theta}
$$
extends smoothly as a radial function of
$x$. The same computation applies in normal coordinates about the south
pole. It is a classical fact that a radial function on
$\mathbb R^4$ extends smoothly across the origin if and only if it is a
smooth function of $r^2=|x|^2$. Indeed, every smooth radial function is O(4)-invariant, its Taylor expansion consists only of O(4)-invariant homogeneous polynomials, which are precisely the powers of $|x|^2$. Since $r=\sin\theta$, this translates
directly into the following condition.

\begin{proposition}\label{smoothextension}
The equivariant connection
$$
A
=
a(\theta)\sum_{i=1}^3T_i\eta^i
$$
extends smoothly across both singular orbits if and only if $b$ extends
smoothly to an even function of $\theta$ near $0$ and to an even
function of $\pi-\theta$ near $\pi$.
\end{proposition}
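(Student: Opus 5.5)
The proof splits into a local computation near each pole, with the south pole handled by symmetry. Near the north pole we use the computation just carried out: in the normal coordinate $x=\sin\theta\,\omega$ the connection has the form $A=b(\theta)\operatorname{Im}(\overline x\,dx)$ on a punctured neighbourhood of $0\in\mathbb H$. The form $\operatorname{Im}(\overline x\,dx)$ is a polynomial one-form. It does not vanish away from the origin; indeed, contracting it with the radial-orthogonal directions recovers $r^2\eta$, and $\eta$ is a coframe. Hence $A$ extends smoothly across $0$ if and only if the scalar function $x\mapsto b(\theta(x))$ extends smoothly.

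For the direction ``smooth $A$ implies smooth $b$'', I would recover $b$ as a coefficient. Pair $A$ at $x$ with a fixed smooth vector field, for instance $V(x)=x\,\mathbf i$ (right multiplication). This gives $\operatorname{Im}(\overline x\,x\mathbf i)=|x|^2\mathbf i$, so $\langle A(V),T_1\rangle = c\,|x|^2 b$. Thus $|x|^2b$ is smooth and radial. A smooth function divisible by $|x|^2$ in this way has smooth quotient: use the Hadamard/Taylor argument on the radial profile, or argue via the invariant expansion. Alternatively, one can observe that $A$ smooth and $SO(4)$-invariant forces the $1$-jet $A(0)=0$ by invariance (no nonzero invariant element of $(\mathbb R^4)^*\otimes\mathfrak{su}(2)$ under $\operatorname{Spin}(4)$ acting through $\operatorname{pr}_R$). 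This is the one step needing care, since division by $r^2$ must preserve smoothness.

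Next I apply the classical fact quoted above: a radial function on $\mathbb R^4$ is smooth if and only if it equals $f(|x|^2)$ with $f$ smooth. The remaining work is to translate ``$b=f(\sin^2\theta)$ with $f$ smooth near $0$'' into ``$b$ is smooth and even in $\theta$ near $0$''. If $b=f(\sin^2\theta)$, evenness and smoothness are immediate because $\sin^2$ is smooth and even. Conversely, suppose $b$ is smooth and even in $\theta$. Then $b(\theta)=g(\theta^2)$ with $g$ smooth, by Whitney's theorem on even functions. Near $0$ the map $\theta\mapsto\sin^2\theta$ is of the form $s=\theta^2 h(\theta^2)$ with $h(0)=1$, which is a local diffeomorphism in the variable $\theta^2$. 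Hence $\theta^2=\phi(\sin^2\theta)$ with $\phi$ smooth, and $b=g\circ\phi(\sin^2\theta)$.

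At the south pole I use the coordinate $\pi-\theta$ and $x=\sin\theta\,\omega$ in the other chart. The same computation applies verbatim, since $\sin\theta=\sin(\pi-\theta)$ and the singular isotropy homomorphism is again $\operatorname{pr}_R$. This gives evenness in $\pi-\theta$. Away from the poles smoothness of $A$ is equivalent to smoothness of $a$, which follows in either direction from the displayed formulas. I expect the main obstacle to be the division step in the converse direction, namely showing that smoothness of $A$ forces smoothness of $b$ rather than merely of $r^2 b$. I would handle it with Taylor's theorem applied to the smooth even function $F(\rho)=\rho^2 b$ in the radial variable $\rho$: since $F$ extends as a smooth radial function vanishing at the origin, $F(\rho)=\tilde F(\rho^2)$ with $\tilde F(0)=0$, and then $\tilde F(s)/s$ is smooth.
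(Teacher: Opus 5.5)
Your proposal is correct and follows the same route as the paper: at each pole, write $A=b\,\operatorname{Im}(\overline x\,dx)$ in the chart $x=\sin\theta\,\omega$, then reduce to the classical fact that smooth radial functions are exactly smooth functions of $|x|^2=\sin^2\theta$. Your extra steps make explicit two points the paper leaves implicit: for the ``only if'' direction, recovering $|x|^2b$ by pairing with $V(x)=x\mathbf i$ and then dividing via $\tilde F(s)/s$; and converting between smooth functions of $\sin^2\theta$ and smooth even functions of $\theta$, via Whitney's theorem and the local invertibility of $\theta^2\mapsto\sin^2\theta$.
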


\subsection{Equivariant Yang--Mills Derivation}
To justify the reduction, note that the Yang–Mills functional is invariant under the $\operatorname{Spin}(4)$-action, and the action is isometric with respect to the $L^2$ metric. Therefore its gradient is equivariant. The tangent space to the manifold of equivariant connections consists precisely of equivariant $\operatorname{ad}P$-valued one-forms. Hence critical points of the restricted functional are Yang--Mills connections, and the gradient flow of the restricted functional is precisely the restriction of the Yang--Mills flow.

We now plug in our symmetric ansatz. As a first step in the computation we have
$$
dA=a'\sum_{i=1}^3T_id\theta\wedge \eta^i-a\sum_{i=1}^3\epsilon_{ijk}\eta^j\wedge\eta^kT_i.
$$
Next,
$$
[A,A]=a^2\sum_{j,k=1}^3[T_j,T_k]\eta^j\wedge \eta^k=2a^2\sum_{j,k=1}^3\epsilon_{jki}T_i\eta^j\wedge\eta^k.
$$
Combining we have 
$$
F_{0i}=\frac{a'}{\sin\theta}T_i\quad\text{ and }\quad F_{jk}=\frac{2(a^2-a)}{\sin^2\theta}\sum_{i=1}^3\epsilon_{ijk}T_i,
$$
where we have computed in the orthonormal basis $\{\sin\theta\,\eta^i\}$. Now
$$
|F_A|^2=\sum_{i=1}^3\langle F_{0i},F_{0i}\rangle+\sum_{1\leq j<k\leq3}\langle F_{jk},F_{jk}\rangle=\frac{12(a')^2}{\sin^2\theta}+\frac{48\bigl(a^2-a\bigr)^2}{\sin^4\theta},
$$
hence
$$
YM(A)=\frac{1}{2}\int_{\mathbb{S}^4}|F_A|^2=6\mathrm{vol}(\mathbb{S}^3)\int_0^\pi(a')^2\sin\theta+\frac{4(a^2-a)^2}{\sin\theta}d\theta.
$$
Let $\phi$ be a smooth variation of $a$; a direct computation gives
\begin{align*}
    \frac{d}{d\varepsilon}\Big|_{\varepsilon=0}
YM(a+\varepsilon\phi)
&=
C\int_0^\pi
\left(
2a'\phi' \sin\theta
+ \frac{8(a^2-a)(2a-1)\phi}{\sin\theta}
\right)
d\theta\\
&=
C\int_0^\pi
\left(
-2(a''+a'\cot\theta)
+ \frac{8(a^2-a)(2a-1)}{\sin^2\theta}
\right)
\phi\, \sin\theta\, d\theta.
\end{align*}
Since $\sin\theta>0$ on $(0,\pi)$, the stationary equation is
\begin{equation}
    a''+a'\cot\theta
- \frac{4a(2a-1)(a-1)}{\sin^2\theta}
=0.    
\end{equation}
We will use the notation 
$$
L(a)=a''+a'\cot\theta-\frac{4a(2a-1)(a-1)}{\sin^2\theta}
$$
for the reduced Yang--Mills operator and 
\begin{equation}\label{nonlinpolyeq}
f(a)=-4a(2a-1)(a-1)
\end{equation}
for the polynomial in the nonlinearity.  

\medskip

We now show how the $L^2$
inner product reduces under the equivariant ansatz. Let $\dot A$ be an equivariant variation:
$$
\dot A(\theta)
=
\phi(\theta)\sum_{i=1}^3 T_i \eta_i.
$$
Using the orthonormal coframe $\{\, d\theta,\ \sin\theta\,\eta_1,\ \sin\theta\,\eta_2,\ \sin\theta\,\eta_3 \,\}$, we have
$$
|\dot A|^2
=
\frac{12\,\phi(\theta)^2}{\sin^2\theta}.
$$
The volume form on $\mathbb{S}^4$ is $d\mathrm{vol}_{\mathbb{S}^4}
=
\sin^3\theta\, d\theta\, d\mathrm{vol}_{\mathbb{S}^3}$. Therefore
$$
\|\dot A\|_{L^2(\mathbb{S}^4)}^2
=
\int_{\mathbb{S}^4} |\dot A|^2
=
12\,\mathrm{vol}(\mathbb{S}^3)
\int_0^\pi
\phi(\theta)^2 \sin\theta\, d\theta.
$$
Thus, up to a constant, the $L^2$ inner product on
equivariant variations reduces to
$$
\langle \phi,\psi\rangle
=
\int_0^\pi
\phi(\theta)\psi(\theta)\sin\theta\, d\theta.
$$
Hence the Yang--Mills flow reduces to the
$L^2(\sin\theta\, d\theta)$–gradient flow of the one–dimensional
functional $YM(a)$:
\begin{equation}\label{thePDE}
a_t
=
a''+a'\cot\theta
- \frac{4a(2a-1)(a-1)}{\sin^2\theta}.    
\end{equation}

\subsection{Initial Conditions}\label{initconditsec}

For each $\lambda>0$ the standard anti-self-dual BPST instanton on
$\mathbb{R}^{4}$, when lifted by stereographic projection centred at the
north pole, is
$$
a^\mathrm{ASD}_\lambda(\theta)
  = \frac{\tan^{2}(\theta/2)}{\lambda^{2}+\tan^{2}(\theta/2)}.
$$
These profiles satisfy the first-order anti-self-duality equation
\begin{equation}\label{ASDeqn}
a'(\theta)
  = -\frac{2a(a-1)}{\sin\theta},
\end{equation}
and hence the Yang--Mills equation.

Let $\xi\in\mathbb{R}^{4}$ denote the coordinates on the northern chart of
$\mathbb{S}^{4}$ obtained by stereographic projection from the south pole,
and let $\zeta\in\mathbb{R}^{4}$ denote the coordinates on the southern chart
obtained by stereographic projection from the north pole. These are related by
the inversion formula
$$
\zeta=\frac{\xi}{|\xi|^{2}},
\qquad
\xi=\frac{\zeta}{|\zeta|^{2}}.
$$
As $\theta\to\pi$ the profile satisfies
$a^\mathrm{ASD}_\lambda(\theta)\to1$, so that in the northern chart the
corresponding connection approaches a pure gauge
$$
A \sim g^{-1}dg,
\qquad
g(\xi)=\frac{\xi}{|\xi|}\in SU(2),
$$
up to terms that decay rapidly in $|\xi|$. Although we will not need it here, note that, by Uhlenbeck's removable-singularity theorem, the connection extends smoothly across the south pole, yielding the BPST instanton on the
charge $1$ $SU(2)$ bundle over $\mathbb{S}^{4}$.

The parameter $\lambda$ is the scale, governing the concentration of
curvature near the north pole. For each $\lambda\in(0,1)$,
$$
\frac{a^\mathrm{ASD}_\lambda(\theta)}{\sin^2\theta}
=
\frac{1}{4\lambda^2}+O(\theta^2)
$$
as $\theta\to0$. Plotting the solutions, see Figure~1, we see that as
$\lambda\downarrow0$ the transition from $0$ to $1$ occurs in an
increasingly narrow region near $\theta=0$ and the curvature becomes more
strongly concentrated at the north pole. In the limit the connections
converge to the ideal anti-self-dual configuration, corresponding to
$a\equiv1$ on $(0,\pi]$.

\begin{figure}[h]
\centering
\includegraphics[width=0.6\textwidth]{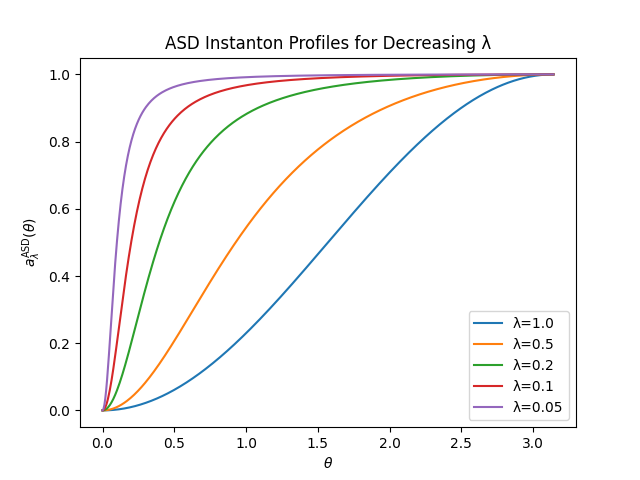}
\caption{}
\end{figure}

There exists an analogous family of Yang--Mills connections which are
self-dual, meaning that they satisfy \eqref{ASDeqn} with a positive sign
on the right-hand side. These may be obtained geometrically from the
anti-self-dual instantons by reflection across the equator.
Define
$$
\iota:S^4\longrightarrow S^4,
\qquad
\iota(\cos\theta,\sin\theta\,\omega)
=
(-\cos\theta,\sin\theta\,\omega)
=
(\cos(\pi-\theta),\sin(\pi-\theta)\omega).
$$
The map $\iota$ is an orientation-reversing isometry which interchanges
the north and south poles and commutes with the $SO(4)$-action.

\begin{lemma}
Let $A^{\mathrm{ASD}}_\lambda
=
a^{\mathrm{ASD}}_\lambda(\theta)
\sum_{i=1}^3 T_i\eta^i$. Then $\iota^*A^{\mathrm{ASD}}_\lambda$ is a self-dual instanton centred
at the south pole, and in the equivariant ansatz it is given by
$$
\iota^*A^{\mathrm{ASD}}_\lambda
=
a^{\mathrm{SD}}_\lambda(\theta)
\sum_{i=1}^3T_i\eta^i =
a^{\mathrm{ASD}}_\lambda(\pi-\theta)\sum_{i=1}^3T_i\eta^i
=
\frac{1}{1+\lambda^2\tan^2(\theta/2)}\sum_{i=1}^3T_i\eta^i.
$$
\end{lemma}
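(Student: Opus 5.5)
The proof has three parts: compute $\iota^*A^{\mathrm{ASD}}_\lambda$ directly in the equivariant ansatz, deduce self-duality from orientation reversal, and simplify $a^{\mathrm{ASD}}_\lambda(\pi-\theta)$ by a trigonometric identity.

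\textbf{Pullback in the ansatz.} In the coordinates $(\theta,\omega)$ on $\mathbb S^4\setminus\{\text{poles}\}\simeq(0,\pi)\times\mathbb S^3$, the map $\iota$ is $(\theta,\omega)\mapsto(\pi-\theta,\omega)$. It acts trivially on the $\mathbb S^3$ factor, so $\iota^*\eta^i=\eta^i$. Since $\iota$ commutes with the $\operatorname{Spin}(4)$-action (it does not touch $\omega$), we may lift it to the trivial bundle by acting trivially on the fibre. For $A^{\mathrm{ASD}}_\lambda=a^{\mathrm{ASD}}_\lambda(\theta)\sum_iT_i\eta^i$ this gives
$$
\iota^*A^{\mathrm{ASD}}_\lambda=a^{\mathrm{ASD}}_\lambda(\pi-\theta)\sum_iT_i\eta^i .
$$
This is again equivariant and of the form \eqref{ansatz}. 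Smoothness at the poles follows from Proposition~\ref{smoothextension}, because $\iota$ exchanges the two evenness conditions.

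\textbf{Self-duality.} Set $\tilde a(\theta)=a^{\mathrm{ASD}}_\lambda(\pi-\theta)$. By the chain rule and $\sin(\pi-\theta)=\sin\theta$, equation \eqref{ASDeqn} becomes
$$
\tilde a'(\theta)=+\frac{2\tilde a(\tilde a-1)}{\sin\theta},
$$
which is the self-dual first-order equation. The same conclusion follows conceptually: $\iota$ is an orientation-reversing isometry, so it intertwines the Hodge star with its negative, and therefore $\iota^*F=F_{\iota^*A}$ satisfies $*F=+F$. In addition, $\tilde a$ solves \eqref{thePDE} in the stationary form, because $L$ is invariant under $\theta\mapsto\pi-\theta$: $\cot$ changes sign and so does the first derivative, while $\sin^2$ is unchanged. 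The curvature of $A^{\mathrm{ASD}}_\lambda$ is concentrated near the north pole, so the curvature of $\iota^*A^{\mathrm{ASD}}_\lambda$ is concentrated near $\iota(\text{north pole})=$ south pole, i.e.\ the instanton is centred there.

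\textbf{Explicit formula.} Using $\tan((\pi-\theta)/2)=\cot(\theta/2)$,
$$
a^{\mathrm{ASD}}_\lambda(\pi-\theta)=\frac{\cot^2(\theta/2)}{\lambda^2+\cot^2(\theta/2)}=\frac{1}{1+\lambda^2\tan^2(\theta/2)},
$$
after multiplying numerator and denominator by $\tan^2(\theta/2)$.

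The only step that needs care is the lift of $\iota$ to the bundle. The pullback is meant with respect to the trivial lift, and one must check that this lift is compatible with the equivariant structure. That holds because $\iota$ fixes $\omega$, so the isotropy data \eqref{singularisotropy} are preserved, with $\widetilde\lambda_\pm$ simply swapped, and they are equal.
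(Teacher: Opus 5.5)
Your argument is the paper's: $\iota^*\eta^i=\eta^i$ gives the pullback formula. Self-duality then follows because the orientation-reversing isometry $\iota$ anticommutes with the Hodge star, so $*F_{\iota^*A}=-\iota^*(*F_A)=\iota^*F_A=F_{\iota^*A}$ when $*F_A=-F_A$. Your direct check of the first-order equation with the $+$ sign, and the identity $\tan\bigl((\pi-\theta)/2\bigr)=\cot(\theta/2)$, are correct supplements that the paper leaves implicit.

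One side claim is false and should be deleted: $\iota^*A^{\mathrm{ASD}}_\lambda$ is not smooth at both poles in the ansatz.

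\begin{itemize}
\item Since $a^{\mathrm{SD}}_\lambda(\theta)\to1$ as $\theta\to0$, the function $a^{\mathrm{SD}}_\lambda/\sin^2\theta$ blows up at the north pole. So the criterion of Proposition~\ref{smoothextension} fails there.
\item This is exactly how it already fails for $A^{\mathrm{ASD}}_\lambda$ at the south pole. Exchanging the two evenness conditions merely moves the singularity to the other pole.
\item The singularity is removable only after a singular gauge transformation. The resulting instanton is a smooth connection on a bundle with $|c_2|=1$, not on $P$.
\end{itemize}

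For the same reason, your closing compatibility check against the isotropy data \eqref{singularisotropy} of $P$ is beside the point. It suffices to use the trivial lift over $\mathbb S^4\setminus\{\text{poles}\}$, where your computation is valid, and that is all the lemma asserts.
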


\begin{proof}
Since $\iota$ leaves the variable $\omega$ unchanged, one has $\iota^*\eta^i=\eta^i$. Therefore
$$
\iota^*A^{\mathrm{ASD}}_\lambda
=
a^{\mathrm{ASD}}_\lambda(\pi-\theta)
\sum_{i=1}^3T_i\eta^i.
$$
It remains to check the duality type. Curvature is natural under
pullback: $F_{\iota^*A^{\mathrm{ASD}}_\lambda}
=
\iota^*F_{A^{\mathrm{ASD}}_\lambda}$. Because $\iota$ is an orientation-reversing isometry, its pullback
anticommutes with the Hodge star on two-forms: $*\iota^*\alpha=-\iota^*(*\alpha)$, and hence
\begin{align*}
*F_{\iota^*A^{\mathrm{ASD}}_\lambda}
&=
*\iota^*F_{A^{\mathrm{ASD}}_\lambda}\\
&=
-\iota^*\bigl(*F_{A^{\mathrm{ASD}}_\lambda}\bigr)\\
&=
\iota^*F_{A^{\mathrm{ASD}}_\lambda}\\
&=
F_{\iota^*A^{\mathrm{ASD}}_\lambda}.
\end{align*}
Thus $\iota^*A^{\mathrm{ASD}}_\lambda$ is self-dual. 
\end{proof}

Using the instanton and anti-instanton connections described above,
one may construct smooth equivariant initial data which resemble a
concentrated instanton at one pole and a concentrated anti-instanton at
the other. For example, one may interpolate between the profiles using a
cutoff function supported in an equatorial collar, or consider the
additive superposition
$$
a^{\mathrm{ASD}}_\lambda+a^{\mathrm{SD}}_\lambda-1.
$$
It was configurations of this type that were expected to lead to
infinite-time bubbling. Surprisingly, a direct computation shows that the
additive gluing has Yang--Mills energy strictly less than that of the
ideal antipodal pair:
$$
YM(a^{\mathrm{ASD}}_\lambda+a^{\mathrm{SD}}_\lambda-1)
<
16\pi^2.
$$
The same phenomenon occurs for the most natural cutoff constructions. Since the Yang--Mills flow decreases energy monotonically, such initial conditions cannot converge to the
ideal antipodal instanton--anti-instanton pair.
Nevertheless, one can also construct less natural gluings whose energy is strictly greater than $16\pi^2$
while still concentrating onto the same ideal pair. For example, if
$\psi$ is a bump function satisfying $\psi\equiv1$ on
$[\delta,\pi-\delta]$, then
$$
YM\!\left(
a^{\mathrm{ASD}}_\lambda
+
a^{\mathrm{SD}}_\lambda
-
1
+
\lambda\psi
\right)
\downarrow 16\pi^2
\qquad
\text{as }\lambda\downarrow0,
$$
for $\delta$ sufficiently small.

\section{Long-Time Existence}

\subsection{Regularising and Short Time Existence}
The reduced form of the Yang--Mills flow \eqref{thePDE} does not immediately seem amenable to standard parabolic theory owing to the coefficients becoming singular at $\theta=0,\pi$. Following the approach of \cite{67fa8f92-32f2-39c3-bd3f-c407da60a7c7}, \cite{Waldron2014Thesis} we can desingularise the nonlinearity via the change of variables $b(\theta,t):=\frac{a(\theta,t)}{\sin^2\theta}$. A direct computation shows that \eqref{thePDE} is equivalent to
\begin{equation}\label{beqn}
\partial_t b
=
\partial_{\theta\theta}b
+
5\cot\theta\,\partial_\theta b
-6b
+12b^2
-8\sin^2\theta\,b^3.
\end{equation}
Let $\mathbb{S}^6$ be equipped with its round metric and fix a pole $o\in\mathbb{S}^6$. We identify a function of $\theta\in[0,\pi]$ with the corresponding zonal function on $\mathbb{S}^6$, that is, the function depending only on the geodesic distance $\theta(x)=d_{\mathbb{S}^6}(x,o)$.

\begin{lemma}\label{shrttime}
Let $b_0\in L^\infty(\mathbb{S}^6)$ be zonal. Then there exists a maximal time
$T_{\max}\in(0,\infty]$ and a unique maximal mild solution $b$ of \eqref{beqn} with initial data $b_0$, which is smooth for $t>0$. Moreover, if $T_{\max}<\infty$, then
$$
\limsup_{t\uparrow T_{\max}}
\|b(\cdot,t)\|_{L^\infty(\mathbb{S}^6)}
=
\infty.
$$
\end{lemma}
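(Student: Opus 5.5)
The approach is to recognise the linear part of \eqref{beqn} as a geometric operator on $\mathbb{S}^6$, so that standard semilinear heat-equation theory applies. For a zonal function $u(\theta)$ on $\mathbb{S}^6$ with the round metric $d\theta^2+\sin^2\theta\,g_{\mathbb{S}^5}$, the Laplace--Beltrami operator acts by
$$
\Delta_{\mathbb{S}^6}u=u''+5\cot\theta\,u'.
$$
Hence \eqref{beqn} can be written as
$$
\partial_t b=(\Delta_{\mathbb{S}^6}-6)\,b+N(x,b),
\qquad
N(x,b)=12b^2-8\sin^2\theta(x)\,b^3.
$$
The coefficient $\sin^2\theta(x)=1-\cos^2\theta(x)$ is the restriction of a polynomial in the ambient coordinates of $\mathbb{R}^7$, so it is a smooth function on $\mathbb{S}^6$. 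The nonlinearity is therefore smooth in $(x,b)$ and locally Lipschitz in $b$, uniformly in $x$: on $\{|b|\le R\}$ we have $|N(x,b)-N(x,c)|\le C(R)|b-c|$.

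The plan proceeds in four steps.

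\emph{Mild formulation.} Let $S(t)=e^{t(\Delta-6)}$ denote the heat semigroup on $\mathbb{S}^6$. It is positive and satisfies $\|S(t)\|_{L^\infty\to L^\infty}\le e^{-6t}\le1$. A mild solution is a solution of
$$
b(t)=S(t)b_0+\int_0^t S(t-s)N(\cdot,b(s))\,ds.
$$

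\emph{Local existence and uniqueness.} Take $R=2\|b_0\|_\infty$ and work in the ball of radius $R$ in $L^\infty((0,T)\times\mathbb{S}^6)$. For $T\lesssim 1/C(R)$ the Duhamel map is a contraction there, which gives a unique fixed point. Uniqueness in the whole class of bounded mild solutions follows from a Gronwall argument on the difference of two solutions.

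\emph{Zonality.} The rotations of $\mathbb{S}^6$ fixing $o$ commute with $S(t)$ and preserve $N$. Composing the solution with such a rotation gives another mild solution with the same data, so by uniqueness the solution is zonal.

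\emph{Maximal solution and blow-up alternative.} Iterate the local result and let $T_{\max}$ be the supremum of the existence times. The local existence time depends only on an upper bound for $\|b_0\|_\infty$. So if $\|b(t)\|_\infty\le M$ on $[0,T_{\max})$ with $T_{\max}<\infty$, one can restart the flow at time $T_{\max}-\tau(M)/2$ and extend past $T_{\max}$, which is a contradiction. This proves the $\limsup$ statement.

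\emph{Smoothing for $t>0$.} This is the only step needing care, and I expect it to be the main obstacle, although it is routine. The data $b_0$ is merely $L^\infty$, so regularity must come from the semigroup. The plan is a bootstrap.

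\begin{itemize}
\item A bounded mild solution has $N(\cdot,b)$ bounded. Heat-kernel estimates, $\|\nabla S(t)f\|_\infty\lesssim t^{-1/2}\|f\|_\infty$, then show that the Duhamel term is $C^{1-\epsilon}$ in space.
\item The same estimates give Hölder continuity in time of $b$ on $[\delta,T]$.
\item Hence $N(\cdot,b)$ is Hölder continuous, and parabolic Schauder estimates on the compact manifold $\mathbb{S}^6$ give $b\in C^{2+\alpha,1+\alpha/2}$ on $[\delta,T]\times\mathbb{S}^6$.
\item Since $N$ is smooth in $(x,b)$, differentiating the equation and repeating the argument yields $C^\infty$ for $t>0$.
\end{itemize}

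The point that makes this work is that after passing to $\mathbb{S}^6$ the coefficients are genuinely smooth. The apparent singularity $\cot\theta$ at the poles is absorbed into the Laplacian, so no special analysis at $\theta=0,\pi$ is required.
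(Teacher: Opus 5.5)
Your proposal is correct and follows essentially the same route as the paper: write \eqref{beqn} as a semilinear heat equation $\partial_t b=\Delta_{\mathbb{S}^6}b-6b+12b^2-8\sin^2\theta\,b^3$ on $\mathbb{S}^6$, note that $\sin^2\theta=1-x_7^2$ is smooth, and invoke standard semilinear parabolic theory. The only difference is that you spell out the steps the paper leaves as standard, namely the $L^\infty$ contraction, Gronwall uniqueness, zonality via rotation invariance, the blow-up alternative and the Schauder bootstrap for smoothing.
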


\begin{proof}
If $u(x)=f(\theta(x))$ is a zonal function on $\mathbb{S}^6$, then
$$
\Delta_{\mathbb{S}^6}u
=
f''(\theta)+5\cot(\theta)f'(\theta).
$$
Hence \eqref{beqn} may be viewed as a semilinear heat equation on
$\mathbb{S}^6$ restricted to zonal functions:
$$
\partial_t b
=
\Delta_{\mathbb{S}^6} b -6b+12b^2-8\sin^2\theta\,b^3.
$$
The class of zonal functions is preserved by both the Laplacian and the
nonlinearity. Moreover, although $\theta$ itself is not smooth on
$\mathbb{S}^6$, the coefficient $\sin^2\theta$ is smooth; indeed, in ambient
coordinates on $\mathbb{S}^6\subset\mathbb{R}^7$ one has $\sin^2\theta = 1-x_7^2$. Consequently the nonlinearity is smooth on the compact manifold
$\mathbb{S}^6$. Standard semilinear parabolic theory therefore yields a unique maximal mild solution, which is smooth for $t>0$, together with the stated $L^\infty$ continuation criterion.
\end{proof}

If $a_0$ arises from a smooth equivariant connection, Proposition~\ref{smoothextension} implies that $b_0:=\frac{a_0}{\sin^2\theta}$ extends to a smooth even function at both endpoints, and hence to a smooth zonal function on $\mathbb{S}^6$. Lemma \ref{shrttime} applies, in this case $b$ is smooth up to $t=0$, and
$$
a(\theta,t)=\sin^2\theta\,b(\theta,t)
$$
gives the unique smooth solution of \eqref{thePDE} with initial data $a_0$, satisfying Proposition~\ref{smoothextension} for every $t<T_{\max}$.

\begin{remark}
    The standard parabolic comparison principle applies to \eqref{beqn}. Throughout the remainder of the paper we apply comparison only to sub- and supersolutions a for which $a/\sin^2\theta$ is bounded (equivalently, the associated b-functions are bounded on $\mathbb S^6$). Such comparisons are understood via the regularised equation \eqref{beqn}, and we shall freely state the resulting inequalities in terms of the corresponding functions a.
\end{remark}

\subsection{A Degenerating Global Supersolution}

The result of Lemma \ref{shrttime} aligns well with the intuition gained from Figure 1: maximal existence time of solutions should be governed by the quadratic behaviour at the poles. In order to control said behaviour we will build a supersolution from the (anti-)self-dual instantons described previously. In particular, the profile $a^{\mathrm{ASD}} + a^{\mathrm{SD}} - 1$ considered in subsection \ref{initconditsec} will be of great importance. We denote it
$$
u_\lambda(\theta)
:=
\frac{\tan^2(\theta/2)}{\lambda^2+\tan^2(\theta/2)}
+
\frac{1}{1+\lambda^2\tan^2(\theta/2)} - 1.
$$
At $\lambda=1$ it is identically $0$ and as $\lambda\uparrow1$ it converges locally uniformly on $(0,\pi)$ to $0$. For $\lambda\in(0,1)$ it attains $0$ only at $\theta=0,\pi$ and otherwise lies in $(0,1)$. We will use the notation
$$
r=r(\theta):=\tan(\theta/2).
$$
\begin{lemma}\label{Fcomp}
Fix $k>1$. Define
$$
w(\theta):=u_\lambda(\theta)+k u_\mu(\theta).
$$
For $\mu$ sufficiently small, and then $\lambda$ sufficiently small,
$$
\inf_{\theta\in(0,\pi)}\frac{L(w(\theta))}{\partial_\lambda u_\lambda(\theta)}=\lim_{\theta\to 0,\pi}\frac{L(w(\theta))}{\partial_\lambda u_\lambda(\theta)}
=
-3k\left(\frac{1}{\mu^2}-\mu^2\right)\lambda + O(\lambda^3).
$$
\end{lemma}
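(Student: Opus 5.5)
The plan is to compute in the variable $r=\tan(\theta/2)$ (equivalently $z=\log r$), where the structure of the operator becomes transparent. By \eqref{coshAC}, $L(a)=\cosh^2 z\,\bigl(a_{zz}+f(a)\bigr)$ with $f$ as in \eqref{nonlinpolyeq}. Each of $a^{\mathrm{ASD}}_\lambda$, $a^{\mathrm{SD}}_\lambda$, $a^{\mathrm{ASD}}_\mu$, $a^{\mathrm{SD}}_\mu$ is stationary, so it satisfies $\partial_{zz}a=-f(a)$, and the constants satisfy $f(0)=f(1)=0$. Since $w$ is a linear combination of these profiles and constants, the second-order part cancels exactly. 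We obtain the purely algebraic identity
$$
L(w)=\cosh^2 z\Bigl[f(w)-f(a^{\mathrm{ASD}}_\lambda)-f(a^{\mathrm{SD}}_\lambda)-k f(a^{\mathrm{ASD}}_\mu)-k f(a^{\mathrm{SD}}_\mu)\Bigr].
$$
The denominator is explicit:
$$
\partial_\lambda u_\lambda=-2\lambda r^2\Bigl[(\lambda^2+r^2)^{-2}+(1+\lambda^2r^2)^{-2}\Bigr]<0 .
$$
Both numerator and denominator are invariant under $\theta\mapsto\pi-\theta$ (that is, $r\mapsto 1/r$), so it suffices to work on $\theta\in(0,\pi/2]$, i.e.\ $r\le 1$.

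\emph{Step 1: the pole limit.} As $r\to0$ the profile $a^{\mathrm{ASD}}$ is $O(r^2)$ and $a^{\mathrm{SD}}-1$ is $O(r^2)$, while $\cosh^2 z\sim 1/(4r^2)$. Because $f'(0)=f'(1)=-4$, the linear parts of the bracket cancel. Writing $f(x)=-4x+12x^2-8x^3$ near $0$, and $f(1+y)=-4y-12y^2-8y^3$ near $1$, the bracket is $O(r^4)$, with leading coefficient given by a quadratic expression in the $r^2$-coefficients $\lambda^{-2}-\lambda^2$ and $k(\mu^{-2}-\mu^2)$. The cross term between these two coefficients dominates and produces the factor $k(\mu^{-2}-\mu^2)$. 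Dividing by $\partial_\lambda u_\lambda\sim -2\lambda^{-3}r^2(1+\lambda^4)$ then gives the finite limit
$$
-3k(\mu^{-2}-\mu^2)\lambda+O(\lambda^3).
$$
The pure $\lambda$-$\lambda$ terms contribute at order $\lambda^3$ after division; I will track them to confirm this.

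\emph{Step 2: the infimum is attained at the poles.} I split $(0,1]$ in $r$ into three regions.

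\emph{The inner region $r\lesssim\lambda$.} Here the $\mu$-profiles are still in their linear regime, because $\lambda\ll\mu$. The bracket, to leading order, becomes $f(A+\epsilon)-f(A)-f'(0)\epsilon$ with $A=a^{\mathrm{ASD}}_\lambda$ and $\epsilon=k u_\mu\approx k(\mu^{-2}-\mu^2)r^2$. Its leading part is $f''$-type, namely $(f'(A)-f'(0))\epsilon=24A(1-A)\epsilon$. Dividing by $\partial_\lambda A$, which is proportional to $A(1-A)$ times $\lambda^{-1}$, the ratio is approximately constant, equal to its pole value, with corrections of relative size $O(\lambda^2)$ and $O(r^2/\mu^2)$. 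I would verify that these corrections increase the ratio.

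\emph{The intermediate region $\lambda\ll r\ll\mu$.} Here $a^{\mathrm{ASD}}_\lambda\approx1$, and the bracket is dominated by terms like $f'(1)$-linearisation errors of size $O(\lambda^2/r^2\cdot\epsilon)$ plus $O(\epsilon^2)$. Meanwhile $|\partial_\lambda u_\lambda|\approx 2\lambda/r^2$ is small. So the sign of the numerator must be checked. If $L(w)\ge0$ there, the ratio is $\le0$ but of smaller magnitude, or else positive. In either case it lies above the pole value.

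\emph{The outer region $r\gtrsim\mu$, up to the equator.} Here the $\lambda$-profiles are exponentially close to the constant $1$ in $z$, and $w\approx 1+k u_\mu$. The numerator is then $\cosh^2 z\,[f(1+ku_\mu)-kf'(1)u_\mu+O(\lambda^2)]$-type. For $\mu$ small this should be positive, since $u_\mu$ lies in $(0,1)$ and $k>1$, so $w$ exceeds $1$ where the cubic $f$ is favourable. Together with the negative denominator, the ratio is nonnegative there, hence exceeds the negative pole value.

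The order of quantifiers, $\mu$ small first and then $\lambda\ll\mu$, is what makes these regions separate.

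\emph{The main obstacle} is Step 2, in the inner and intermediate regions. There both numerator and denominator are small, and one needs a uniform, signed comparison rather than leading-order asymptotics. I would handle it by writing the bracket exactly via Taylor's formula with remainder,
$$
f(A+\epsilon)-f(A)-f'(0)\epsilon=\bigl(f'(A)-f'(0)\bigr)\epsilon+\tfrac12 f''(\cdot)\epsilon^2,
$$
and similarly around the SD profiles. I would then bound the ratio from below by its pole value using the explicit rational forms in $r$. If the sign in the intermediate region fails, I would fall back to showing that its magnitude is $o(\lambda)$ there, which still exceeds the $-\Theta(\lambda)$ pole value.
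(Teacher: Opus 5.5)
Your reduction of $L(w)$ to the algebraic bracket is correct. Step 1 is essentially the paper's pole computation: the bracket is $12(A^2+B)r^4+O(r^6)$ in the paper's notation, the $\lambda^{-4}$ terms cancel, so $L(w)=3(A^2+B)r^2+O(r^4)$, and dividing by $-2(\lambda^{-3}+\lambda)r^2$ gives the stated value.

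The gap is Step 2, which is the real content of the lemma. It is not proved, and the heuristics you give for it are partly wrong. Put $\epsilon=ku_\mu$. In the intermediate region $\lambda\ll r\ll\mu$ the bracket contains $24u_\lambda(1-u_\lambda)\epsilon\approx24k\mu^{-2}\lambda^2$. Multiplying by $\cosh^2z\approx(4r^2)^{-1}$ and dividing by $\partial_\lambda u_\lambda\approx-2\lambda r^{-2}$ gives $-3k\mu^{-2}\lambda$, which is exactly the pole value. This term dominates until $r\sim(\lambda\mu)^{1/2}$, where the terms quadratic in $\epsilon$ (about $-12k(k+1)\mu^{-4}r^4$) take over and $L(w)$ changes sign. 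So the ratio there is neither of smaller magnitude than the pole value nor $o(\lambda)$, and your fallback is unavailable.

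More precisely, $\partial_\lambda u_\lambda=-\tfrac2\lambda\bigl[a^{\mathrm{ASD}}_\lambda(1-a^{\mathrm{ASD}}_\lambda)+a^{\mathrm{SD}}_\lambda(1-a^{\mathrm{SD}}_\lambda)\bigr]$. Hence on the whole range $r\ll(\lambda\mu)^{1/2}$ the ratio equals $-12k\lambda\,u_\mu/\sin^2\theta$ to leading order, i.e.\ the pole value up to small corrections. The inequality $\Phi(\theta)\ge\Phi(0)$ therefore rests entirely on the signs of subleading terms of several origins, which do not all push the same way. These include the factor $(1+r^2)^2$ in $\cosh^2z$, which lowers the ratio; the curvature of $u_\mu$; the terms quadratic and cubic in $\epsilon$; and $L(u_\lambda)$. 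Since $\Phi(\theta)-\Phi(0)\to0$ at the pole, these must be computed exactly on the scale $r^2=\lambda^2\sigma$, with remainders controlled uniformly and matched across regions. Estimates phrased with $\lesssim$ and $\ll$ cannot produce an exact inequality, and ``I would verify that these corrections increase the ratio'' is precisely the missing argument.

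The outer region contains a sign error. Your reason (``$w$ exceeds $1$ where $f$ is favourable'') is backwards, since $f<0$ on $(1,\infty)$. Near $r\sim\mu$ the $\mu$-profiles are fully nonlinear, so the bracket is $f(w)-kf(a^{\mathrm{ASD}}_\mu)-kf(a^{\mathrm{SD}}_\mu)+\dots$ rather than $f(1+ku_\mu)-kf'(1)u_\mu$. With $x=a^{\mathrm{ASD}}_\mu$ one finds $f(1+kx)-kf(x)=-12k(k+1)x^2-8k(k^2-1)x^3<0$. Hence $L(w)<0$ and the ratio is positive there. That is the conclusion you want, but a positive numerator over a negative denominator, as you state, would give a negative ratio.

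The paper avoids all region matching. It writes everything as a rational function of $y=r^2+r^{-2}$, which also builds in the symmetry $\theta\mapsto\pi-\theta$. It then computes $-\partial_y\Phi$ exactly as a degree-six polynomial $P_{\lambda,\mu,k}(y)$ over a positive denominator. Every coefficient of $P$ is positive once $\mu$ and then $\lambda$ are small, since their leading parts are $\lambda^{2m_j}\mu^{2n_j}\gamma_j(k)$ with $\gamma_j(k)>0$. This gives monotonicity of $\Phi$ in $\theta$ on all of $(0,\pi/2)$ at once, hence the infimum is the pole limit. To keep your decomposition you would need a comparably exact treatment of the inner and intermediate scales.
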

\begin{proof}
Set $y=r^2+r^{-2}$. Then $y$ is invariant under $\theta\mapsto\pi-\theta$, decreases from
$\infty$ to $2$ on $(0,\pi/2)$, and
$$
u_\lambda(\theta)
=
\frac{1-\lambda^4}{1+\lambda^4+\lambda^2y},
\qquad
u_\mu(\theta)
=
\frac{1-\mu^4}{1+\mu^4+\mu^2y}.
$$
Write $\Phi(y):=\frac{L(w)}{\partial_\lambda u_\lambda}$. A direct differentiation and simplification gives
$$
-\partial_y\Phi(y)
=
\frac{P_{\lambda,\mu,k}(y)}
{2\lambda\,
 (1+\lambda^4+\lambda^2y)^2
 (1+\mu^4+\mu^2y)^4
 \bigl((1+\lambda^4)y+4\lambda^2\bigr)^2},
$$
where
$$
P_{\lambda,\mu,k}(y)=\sum_{j=0}^{6}c_j(\lambda,\mu,k)y^j
$$
is a polynomial. If $(m_0,\ldots,m_6)=(0,0,0,0,2,4,6),
\,
(n_0,\ldots,n_6)=(0,2,2,4,4,4,4),$ then
$$
\lim_{\mu\downarrow0}\lim_{\lambda\downarrow0}
\lambda^{-2m_j}\mu^{-2n_j}c_j(\lambda,\mu,k)
=
\gamma_j(k),
$$
where
$$
\footnotesize
\begin{array}{c|ccccccc}
j
&0&1&2&3&4&5&6\\ \hline
\gamma_j(k)
&2k(k+1)(2k+1)
&8k(k+1)(2k+1)
&6k^2(k+1)
&6k(k+1)
&12k(k+2)
&6k(k+5)
&12k .
\end{array}
$$
All these quantities are strictly positive. Hence, after choosing $\mu$
and then $\lambda$ sufficiently small, every coefficient $c_j(\lambda,\mu,k)$ is
positive. Thus $P_{\lambda,\mu,k}(y)>0$ for $y\geq2$, and consequently $\partial_y\Phi(y)<0$. Since $y$ decreases on $(0,\pi/2)$, the function
$$
\theta\longmapsto
\frac{L(w(\theta))}{\partial_\lambda u_\lambda(\theta)}
$$
is increasing there. It is symmetric under
$\theta\mapsto\pi-\theta$, so its infimum is attained in the limits
$\theta\to0,\pi$.

It remains to compute this limit. As $r\to0$,
$$
u_\lambda(r)
=
\left(\frac1{\lambda^2}-\lambda^2\right)r^2
+
\left(\lambda^4-\frac1{\lambda^4}\right)r^4
+
O(r^6).
$$
Therefore
$$
w(r)=Ar^2+Br^4+O(r^6),
$$
where
$$
A=
\left(\frac1{\lambda^2}-\lambda^2\right)
+
k\left(\frac1{\mu^2}-\mu^2\right),
$$
and
$$
B=
\left(\lambda^4-\frac1{\lambda^4}\right)
+
k\left(\mu^4-\frac1{\mu^4}\right).
$$
In the variable $r$,
$$
L(a)
=
\frac{(1+r^2)^2}{4}
\left(
a_{rr}+\frac1r a_r
-\frac{4a(2a-1)(a-1)}{r^2}
\right),
$$
and hence
$$
L(w)=3(A^2+B)r^2+O(r^4).
$$
Moreover,
$$
\partial_\lambda u_\lambda
=
-2\left(\lambda^{-3}+\lambda\right)r^2+O(r^4).
$$
It follows that
$$
\lim_{\theta\to0,\pi}
\frac{L(w(\theta))}{\partial_\lambda u_\lambda(\theta)}
=
-\frac32
\frac{A^2+B}{\lambda^{-3}+\lambda}.
$$
We have $A^2+B=2k\left(\frac1{\mu^2}-\mu^2\right)\lambda^{-2}+O(1)$ as $\lambda\downarrow0$. Therefore
$$
\lim_{\theta\to0,\pi}
\frac{L(w(\theta))}{\partial_\lambda u_\lambda(\theta)}
=
-3k\left(\frac1{\mu^2}-\mu^2\right)\lambda
+O(\lambda^3).
$$
\end{proof}

\begin{corollary}\label{longtimeexist}
Let $a_0$ arise from a smooth equivariant connection and let $a(\theta,t)$ be the corresponding solution to \eqref{thePDE}. Then 
$$
a(\theta,t) \le Ce^{c_{\mathrm{pole}}t} \sin^2\theta \quad \text{for all } \theta\in[0,\pi],\; t\geq0,
$$
where $C,c_{\mathrm{pole}}>0$ are constants depending only on $a_0$. In particular, the solution exists for all time.
\end{corollary}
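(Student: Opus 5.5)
The plan is to build a time-dependent supersolution from Lemma \ref{Fcomp} by letting the scale $\lambda$ decrease exponentially in time. Fix $k>1$, then $\mu$ small, and set
$$
\overline a(\theta,t):=w_{\lambda(t)}(\theta)=u_{\lambda(t)}(\theta)+k\,u_\mu(\theta),
$$
with $\lambda(t)$ decreasing and to be chosen. Since $\partial_\lambda u_\lambda<0$ on $(0,\pi)$ (it equals $-2(\lambda^{-3}+\lambda)r^2+O(r^4)$ near the poles, and is negative throughout because $u_\lambda=(1-\lambda^4)/(1+\lambda^4+\lambda^2y)$ is decreasing in $\lambda$ for $\lambda<1$), we have $\partial_t\overline a=\lambda'(t)\,\partial_\lambda u_\lambda$. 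The supersolution inequality $\partial_t\overline a\ge L(\overline a)$ is therefore equivalent, after dividing by the negative quantity $\partial_\lambda u_\lambda$, to
$$
\lambda'(t)\le \frac{L(w)}{\partial_\lambda u_\lambda}\quad\text{for all }\theta .
$$
By Lemma \ref{Fcomp}, the infimum of the right-hand side over $\theta$ is $-3k(\mu^{-2}-\mu^2)\lambda+O(\lambda^3)\ge -c\lambda$, where $c:=4k(\mu^{-2}-\mu^2)$, for all $\lambda\le\lambda_0$ small. Choosing $\lambda(t)=\lambda_1e^{-ct}$ with $\lambda_1\le\lambda_0$ gives $\lambda'=-c\lambda\le$ the infimum, so $\overline a$ is a supersolution for all $t\ge0$. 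One point needs checking: Lemma \ref{Fcomp} is stated for $\lambda$ small with $\mu$ fixed, so I will verify that the positivity of the coefficients $c_j$ holds uniformly for all $\lambda\in(0,\lambda_0]$, not only along a sequence. The stated limits $\lambda^{-2m_j}c_j\to\gamma_j\mu^{2n_j}(1+o(1))$ give exactly this.

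The next step is to arrange the initial ordering $a_0\le\overline a(\cdot,0)$. Since $b_0=a_0/\sin^2\theta$ is bounded, it suffices to have $\overline a(\cdot,0)/\sin^2\theta\ge\|b_0\|_\infty$ everywhere. Near the poles, $u_\lambda/\sin^2\theta\to(\lambda^{-2}-\lambda^2)/4$, which is large for small $\lambda_1$. In the middle region $u_\lambda\to1$ locally uniformly as $\lambda\downarrow0$, so $\overline a\ge 1+k u_\mu$ roughly there. This is the main obstacle: in the middle, $a_0$ could exceed $1+ku_\mu$. Two remedies are available. The first is to take $k$ large, since $k>1$ is free and the constants only depend on it, so that $ku_\mu\ge\max a_0$ on a compact middle region, and then $\lambda_1$ small handles the collars near the poles via the quadratic bound. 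The second is to compare $b$ with $\overline b=\overline a/\sin^2\theta$ uniformly, using $\overline a\ge c_0\sin^2\theta\,\lambda^{-2}$ globally for small $\lambda$. Concretely, $u_\lambda\ge \min(1/2,\ c\lambda^{-2}\sin^2\theta)$ and $ku_\mu\ge k c_\mu\sin^2\theta$, so taking $k\ge \|b_0\|_\infty/c_\mu$ with $\mu$ fixed first gives the ordering directly. I will take this choice, $k$ large depending on $a_0$, noting that Lemma \ref{Fcomp} allows any $k>1$ before choosing $\mu,\lambda$.

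With the ordering in place, the comparison principle for \eqref{beqn} applies, as in the Remark, because $\overline b$ is bounded at each time and zonal-smooth. It gives $a\le\overline a$ on $[0,T_{\max})$. Near the poles, $\overline a\le C(\lambda(t)^{-2}+k\mu^{-2})\sin^2\theta$. Globally, $u_\lambda/\sin^2\theta\le (\lambda^{-2})/4\cdot C$ as well, from the explicit formula and $r^2\lesssim\sin^2\theta$ near $0$ together with symmetry. Hence
$$
a\le Ce^{2ct}\sin^2\theta .
$$
For the lower bound I use the same construction reflected, or simply $\underline a=-\overline a$-type comparison, observing that $f(a)$ is odd-symmetric about $1/2$ only. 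In fact the paper's statement is one-sided, so only the upper bound is needed for the stated estimate. For long-time existence, however, $\|b\|_\infty$ must be bounded on finite intervals, so I also need $b$ bounded below. I will obtain this by comparing with a subsolution: the cubic $-8\sin^2\theta\,b^3$ is positive for $b<0$, so a constant $-M e^{6t}$ is a subsolution of \eqref{beqn}. This gives $b\ge -\|b_0^-\|_\infty e^{6t}$. Then $\|b(t)\|_\infty$ is finite on bounded time intervals, and Lemma \ref{shrttime} yields $T_{\max}=\infty$, with $c_{\mathrm{pole}}=2c$.
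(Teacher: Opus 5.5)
This is essentially the paper's argument. You use the supersolution $u_{\lambda(t)}+ku_\mu$ with $\lambda(t)$ decaying exponentially via Lemma~\ref{Fcomp}, take $k$ large so that $ku_\mu\ge a_0$, bound $b$ from below by a spatially constant subsolution, and finish with the continuation criterion. Your explicit choice $\lambda(t)=\lambda_1e^{-ct}$ is a slightly cleaner version of the paper's ODE, since it needs only the one-sided bound $\inf_\theta L(w)/\partial_\lambda u_\lambda\ge -c\lambda$. Your barrier $-Me^{6t}$ is valid, though the static $b\equiv -M$ (the paper's $-C_2\sin^2\theta$) already works.

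The one point to repair is the order of choices. You fix $\mu$ first and then take $k\ge\|b_0\|_\infty/c_\mu$, but Lemma~\ref{Fcomp} requires $\mu$ small depending on $k$, so as written the choice is circular. The paper breaks the circularity as follows: fix $\mu_1$, choose $k$ with $ku_{\mu_1}\ge a_0$, then shrink $\mu\le\mu_1$ and use $u_\mu\ge u_{\mu_1}$. This is the monotonicity in the scale that you already noted. Alternatively, the explicit formula gives the uniform bound $u_\mu\ge\frac{1-\mu^4}{4}\sin^2\theta$ for $\mu\le1$, so $k$ can be chosen independently of $\mu$.
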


\begin{proof}
Since $a_0(\theta)=O(\sin^2\theta)$, there exists $C_1>0$ such that
$$
a_0(\theta)\le C_1\sin^2\theta
\qquad\text{for all }\theta\in[0,\pi].
$$
Fix $\mu_1>0$ sufficiently small. Since $u_{\mu_1}(\theta)$ is strictly positive on $(0,\pi)$ and satisfies
$$
u_{\mu_1}(\theta)\sim
\frac{1}{4}\Bigl(\frac1{\mu_1^2}-\mu_1^2\Bigr)\sin^2\theta
\qquad\text{as }\theta\to0,\pi,
$$
there exists $c_1>0$ such that $u_{\mu_1}(\theta)\ge c_1\sin^2\theta$ for all $\theta\in[0,\pi]$. Choose $k>1$ sufficiently large so that $kc_1\ge C_1$.
Now, with this $k$ fixed, decrease $\mu\le\mu_1$ if necessary so that Lemma \ref{Fcomp} applies. Differentiating $u_\lambda$ gives
$$
\partial_\lambda u_\lambda(\theta)
=
-2\lambda\frac{\tan^2(\theta/2)}
       {(\lambda^2+\tan^2(\theta/2))^2}
-
2\lambda\frac{\tan^2(\theta/2)}
       {(1+\lambda^2\tan^2(\theta/2))^2},
$$
hence $\partial_\lambda u_\lambda\le0$ with equality only at $\theta=0,\pi$. Therefore,
$$
ku_\mu\ge ku_{\mu_1}\ge C_1\sin^2\theta\ge a_0.
$$
In particular $a_0(\theta)\le w(\theta)$.

Let $\lambda=\lambda(t)$ with $\lambda(0)=\lambda_0$ and define
$$
w(\theta,t):=u_{\lambda(t)}(\theta)+ku_\mu(\theta).
$$
Then $\partial_t w=\lambda'(t)\partial_\lambda u_{\lambda(t)}$.  Since $\partial_\lambda u_\lambda\le0$, the supersolution condition $\partial_t w\ge L(w)$ is equivalent to
$$
\lambda'(t)\le
\frac{L(w)}{\partial_\lambda u_\lambda}.
$$
By Lemma \ref{Fcomp},
$$
\inf_{\theta\in(0,\pi)}
\frac{L(w)}{\partial_\lambda u_\lambda}
=
-c\lambda+O(\lambda^3)
$$
for some $c>0$. In particular, after possibly decreasing $\lambda_0$, the
right-hand side is strictly negative for all $0<\lambda\le\lambda_0$.
Define $\lambda(t)$ to be the solution of the ODE
$$
\lambda'(t)
=
2\inf_{\theta\in(0,\pi)}
\frac{L(w)}{\partial_\lambda u_\lambda},
\qquad
\lambda(0)=\lambda_0,
$$
so that $w$ is a supersolution. Since $\lambda'(t)<0$, the function
$\lambda(t)$ is decreasing and therefore $0<\lambda(t)\le\lambda_0$ for all $t\geq0$. After possibly decreasing $\lambda_0$ again, there exist
constants $c_2,c_3>0$ such that
$$
-c_3\lambda(t)
\le
\lambda'(t)
\le
-c_2\lambda(t)
$$
for all $t\ge0$. Integrating gives $\lambda_0e^{-c_3t}
\le
\lambda(t)
\le
\lambda_0e^{-c_2t}$. In particular $\lambda(t)$ remains strictly positive for every finite time. The comparison principle yields $a(\theta,t)\le w(\theta,t)$.

Since $u_\lambda$ vanishes quadratically at both poles, there exists a constant
$C=C(\mu,k)>0$ such that
$$
u_\lambda(\theta)+ku_\mu(\theta)
\le
C\Bigl(\lambda^{-2}+1\Bigr)\sin^2\theta
$$
for all $\theta\in[0,\pi]$ and all sufficiently small $\lambda$. Applying the above bound at $\lambda(t)$,
$$
w(\theta,t)
\le
C\Bigl(\lambda(t)^{-2}+1\Bigr)\sin^2\theta.
$$
Using the lower bound $\lambda(t)\ge \lambda_0e^{-c_3t}$, we obtain
$$
w(\theta,t)
\le
C\Bigl(\lambda_0^{-2}e^{2c_3t}+1\Bigr)\sin^2\theta.
$$
After enlarging $C$ if necessary, this yields $w(\theta,t)
\le
Ce^{c_{\mathrm{pole}}t}\sin^2\theta$. Therefore
$$
a(\theta,t)
\le
Ce^{c_{\mathrm{pole}}t}\sin^2\theta
\qquad
\text{for all }\theta\in[0,\pi],\ t\ge0.
$$

For the lower bound, define
$$
\underline w(\theta):=-C_2\sin^2\theta,
$$
where $C_2>0$ will be chosen sufficiently large. A direct computation yields
\begin{align*}
L(\underline w)
&=
2C_2\sin^2\theta-4C_2\cos^2\theta
+
4C_2(2C_2\sin^2\theta+1)(C_2\sin^2\theta+1)\\
&=
6C_2\sin^2\theta
+
12C_2^2\sin^2\theta
+
8C_2^3\sin^4\theta\\
&\ge 0.
\end{align*}
Thus $\underline w$ is a static subsolution. Since $a_0(\theta)=O(\sin^2\theta)$, there exists $C_3>0$ such that
$$
a_0(\theta)\ge -C_3\sin^2\theta
\qquad\text{for all }\theta\in[0,\pi].
$$
Choosing $C_2\ge C_3$ gives $a_0(\theta)\ge\underline w(\theta)$. The comparison principle therefore yields
$$
a(\theta,t)\ge -C_2\sin^2\theta
\qquad\text{for all }t\ge0.
$$

Thus, for every $T>0$,
$$
|a(\theta,t)|
\le
C_T\sin^2\theta
\qquad
\text{for all }
\theta\in[0,\pi],\ t\in[0,T],
$$
where $C_T:=\max\{Ce^{c_{\mathrm{pole}}T},\,C_2\}$. Hence
$$
\Bigl\|
\frac{a(\cdot,t)}{\sin^2\theta}
\Bigr\|_{L^\infty}
\le C_T<\infty
\qquad
\text{for all }t\in[0,T].
$$
Since $T>0$ was arbitrary, the continuation criterion of Lemma~3.1
implies that the solution extends smoothly for all time.
\end{proof}

\section{Asymptotics modulo Metastability}

\subsection{Decaying Supersolutions Foliating $(0,\pi)\times(0,1)$}
For the family $u_\lambda$, we compute $L(u_\lambda)$ explicitly. Since the second-order part of $L$ is linear and $u_\lambda = a^{\mathrm{ASD}} + a^{\mathrm{SD}} - 1$, we have
\begin{align*}
    L(u_\lambda)
&=
L(a^{\mathrm{ASD}}) + L(a^{\mathrm{SD}}) - L(1)
+\frac{1}{\sin^2(\theta)}
\Bigl\{
f(a^{\mathrm{ASD}} + a^{\mathrm{SD}} - 1)
- f(a^{\mathrm{ASD}}) - f(a^{\mathrm{SD}})
\Bigr\}\\
&=\frac{1}{\sin^2(\theta)}
\Bigl\{
f(a^{\mathrm{ASD}} + a^{\mathrm{SD}} - 1)
- f(a^{\mathrm{ASD}}) - f(a^{\mathrm{SD}})
\Bigr\}
\end{align*}
A direct expansion shows that for any $A,B\in\mathbb{R}$,
$$
f(A+B-1)-f(A)-f(B)
=
-24(1-A)(1-B)(A+B-1).
$$
Applying this with $A=a^{\mathrm{ASD}}(\theta)$ and $B=a^{\mathrm{SD}}(\theta)$, and using that $0\leq a^{\mathrm{ASD}},a^{\mathrm{SD}},a^{\mathrm{ASD}}+a^{\mathrm{SD}}-1\leq1$, we obtain
$$
L(u_\lambda)(\theta) \le 0
\qquad \text{for all } \theta\in[0,\pi].
$$
In other words, $u_\lambda$ is a static supersolution of $\eqref{thePDE}$. 

We now let $\lambda$ depend on time. As in Corollary \ref{longtimeexist} the supersolution property is equivalent to
$$
\lambda'(t) \le \frac{L(u_\lambda)(\theta)}{\partial_\lambda u_\lambda(\theta)}
\qquad \text{for all } \theta\in[0,\pi].
$$
As in Lemma \ref{Fcomp} one can show that the infimum of the right hand side is attained at the poles. By symmetry it suffices to consider $\theta\to 0$. Expanding $u_\lambda$ as $\theta\to 0$ gives
$$
u_\lambda(\theta)
=
\frac{1-\lambda^4}{4\lambda^2}\,\theta^2 + O(\theta^4),
$$
which yields
$$
L(u_\lambda)(\theta)
=
\tfrac{3}{2}(\lambda^4-1)\,\theta^2 + O(\theta^4),
\qquad
\partial_\lambda u_\lambda(\theta)
=
-\,\tfrac{\lambda^4+1}{2\lambda^3}\,\theta^2 + O(\theta^4).
$$
Hence
$$
\lim_{\theta\to 0}
\frac{L(u_\lambda)(\theta)}{\partial_\lambda u_\lambda(\theta)}
=
\frac{3\lambda^3(1-\lambda^4)}{\lambda^4+1}
>0
\quad \text{for } 0<\lambda<1.
$$
The same limit holds at $\theta=\pi$. Fix $\lambda_0\in(0,1)$ and let $\lambda(t)$ solve
$$
\lambda'(t)=\tfrac12 \inf_{\theta\in(0,\pi)} \frac{L(u_\lambda)(\theta)}{\partial_\lambda u_\lambda(\theta)}, \qquad \lambda(0)=\lambda_0.
$$
Then $\lambda$ is \textit{increasing} and $u_{\lambda(t)}$ are supersolutions converging to $0$ as $t\to\infty$. 

Moreover, for each fixed $\theta\in(0,\pi)$,
$$
u_\lambda(\theta)\to1
\qquad\text{as }\lambda\downarrow0,
$$
while
$$
u_\lambda(\theta)\sim
\frac{1-\lambda^4}{4\lambda^2}\theta^2
\qquad\text{as }\theta\to0,
$$
and similarly at $\theta=\pi$. Consequently, if
$a(\theta)=O(\sin^2\theta)$ and $\sup a<1$, then
$a\le u_\lambda$ for all sufficiently small $\lambda$.
Indeed, away from the poles the family $u_\lambda$ converges uniformly
to $1$, while near the poles the quadratic coefficient of $u_\lambda$
tends to $+\infty$ as $\lambda\downarrow0$.

\subsection{Eventual Bounding by Perturbed Supersolutions and Convergence to $0$}

In view of the decaying supersolutions constructed in the previous subsection, the main
remaining task is to prove that every solution eventually lies strictly below height $1$.
Once this has been established, those supersolutions provide the required upper bound;
a simple lower barrier will then complete the convergence argument. We first dispose of
the cases in which the initial data are already at or below the threshold. Throughout this subsection, for any solution $a(\theta,t)$ of \eqref{thePDE}, we write
$$
M(t):=\max_{\theta\in[0,\pi]}a(\theta,t).
$$

If $M(0)<1$, no further argument is needed to place the solution below the threshold. If $M(0)=1$, applying
the strong maximum principle to $1-a$ on compact subcylinders of
$(0,\pi)\times(0,\infty)$ shows that $M(t)<1 \qquad \text{for every } t>0$. Now we handle $M(0)>1$.

\begin{lemma}\label{supdecaylem}
Let $a(\theta,t)$ be the solution of \eqref{thePDE} arising from smooth equivariant
initial data, and suppose that $M(0)>1$. Then
$$
M(t)-1\leq \bigl(M(0)-1\bigr)e^{-4t}
\qquad\text{for every }t\geq 0.
$$
\end{lemma}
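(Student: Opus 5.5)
The plan is a direct maximum-principle argument for $M(t)$, run on the singular equation \eqref{thePDE} in the interior of $(0,\pi)$. I will not compare with a spatially constant supersolution. Such a barrier does not vanish at the poles, so its $b$-function is unbounded and the comparison convention of the Remark in Section 3 does not apply to it.

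First, locate the maximum. By Corollary \ref{longtimeexist} the solution is smooth for all $t\ge0$. Since $a(\theta,t)=\sin^2\theta\,b(\theta,t)$ with $b$ bounded on compact time intervals, $a$ vanishes at $\theta=0,\pi$. Hence whenever $M(t)>1$, every maximiser $\theta^*$ of $a(\cdot,t)$ lies in the open interval $(0,\pi)$. There $a_\theta=0$ and $a_{\theta\theta}\le0$, so \eqref{thePDE} gives
$$
\partial_t a(\theta^*,t)\le \frac{f(M(t))}{\sin^2\theta^*}.
$$

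Second, bound the reaction term. For $M\ge1$ we have $M(2M-1)\ge1$, so $f(M)=-4M(2M-1)(M-1)\le-4(M-1)\le0$. Since $\sin^{-2}\theta^*\ge1$ and $f(M)\le0$, this yields $\partial_t a(\theta^*,t)\le -4(M(t)-1)$.

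Third, turn the pointwise bound into a differential inequality for $M$. I will use Hamilton's trick: $M$ is locally Lipschitz, and its upper Dini derivative satisfies $D^+M(t)\le\max_{\theta^*}\partial_ta(\theta^*,t)$, the maximum taken over maximisers of $a(\cdot,t)$. This requires the maximisers to stay in a compact subset of $(0,\pi)$ for $t$ near a given $t_0$ with $M(t_0)>1$. That holds by continuity: $a$ is close to $0$ near the poles, uniformly on short time intervals. Together with the previous step this gives $D^+(M-1)\le-4(M-1)$ on the set $\{M>1\}$. A Gronwall argument for Dini derivatives then gives $M(t)-1\le(M(0)-1)e^{-4t}$ on the maximal interval $[0,t_1)$ on which $M>1$.

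Finally, handle the possibility that $M$ reaches $1$. If $t_1<\infty$, then $M(t_1)=1$, and the strong maximum principle argument already given for the case $M(0)=1$ shows $M(t)\le1$ for all $t\ge t_1$. The claimed bound then holds trivially, since its right-hand side is positive.

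The main obstacle is the third step: justifying the derivative of $M$ at a non-differentiable maximum while the equation is singular at the poles. The key input is that the maximisers stay uniformly away from the poles while $M>1$, and this is exactly what the quadratic vanishing $a=O(\sin^2\theta)$ from Corollary \ref{longtimeexist} provides.
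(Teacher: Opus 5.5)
Your proposal is correct and follows the paper's proof. The shared steps are: locate the maximum in $(0,\pi)$ using the vanishing of $a$ at the poles, bound $f(M)/\sin^2\theta^*\le -4(M-1)$, turn this into a differential inequality for $M$, and integrate up to the first time $M$ reaches $1$; your use of Hamilton's Dini-derivative trick makes explicit what the paper calls the ``standard maximum-principle argument''. The only difference is the endgame: the paper rules out a later return above $1$ directly from $M'\le-4(M-1)$ on $\{M>1\}$. That is slightly more self-contained than your appeal to the strong-maximum-principle remark for $M(0)=1$, because that remark tacitly presupposes that $a\le 1$ is preserved, which is exactly what the re-crossing argument supplies.
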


\begin{proof}
At any time for which $M(t)>1$, the maximum is attained at some
$\theta_t\in(0,\pi)$. At such a point,
$$
a_\theta(\theta_t,t)=0,
\qquad
a_{\theta\theta}(\theta_t,t)\leq 0,
$$
and hence
$$
a_t(\theta_t,t)
\leq
-\frac{4M(t)(2M(t)-1)(M(t)-1)}{\sin^2\theta_t}
\leq -4\bigl(M(t)-1\bigr).
$$
The standard maximum-principle argument therefore gives
$$
M'(t)\leq -4\bigl(M(t)-1\bigr)
$$
for almost every such time.

Let
$$
\nu:=\inf\{t\geq 0:M(t)\leq 1\},
$$
with $\nu=\infty$ if this set is empty. Integrating the preceding inequality on $[0,\nu)$ gives the claimed estimate
there. If $\nu<\infty$, then $M(t)\leq1$ for every $t\geq\nu$, since otherwise
continuity of $M$ would imply that $M$ crosses the level $1$ from below at some
later time, contradicting the differential inequality
$M'(t)\leq-4(M(t)-1)$, which holds whenever $M(t)>1$. The estimate is therefore
automatic after $\nu$.
\end{proof}

In the following Lemma, the parameter $\beta\in(0,1)$ governs the width $\lambda^\beta$ of
the transition region for the cutoff and may be chosen arbitrarily.
Once $\beta$ is fixed, the admissible exponent $\alpha$ and the
required smallness of $\lambda$ depend on the pole-growth constant
$c_{\mathrm{pole}}$, and hence ultimately on the initial data.

\begin{lemma}\label{perturbboundlem}
Let $a(\theta,t)$ be a classical solution of \eqref{thePDE} with
smooth equivariant initial data, and suppose that $M(0)>1$.
Fix $\beta\in(0,1)$ and
$$
0<\alpha<\min\left\{2-2\beta,\frac{8\beta}{c_{\mathrm{pole}}}\right\},
$$
where $c_{\mathrm{pole}}$ is the constant in the pole-growth estimate $a(\theta,t)\leq Ce^{c_{\mathrm{pole}}t}\sin^2\theta$ from Corollary~\ref{longtimeexist}. 

Then, for every sufficiently small $\lambda>0$, there exist a time
$T_\lambda>0$ and a cutoff $\psi_\lambda\in C_c^\infty(0,\pi)$,
with $0\leq\psi_\lambda\leq1$, such that
$$
\psi_\lambda=0
\quad\text{on}\quad
[0,\lambda^\beta]\cup[\pi-\lambda^\beta,\pi],
\qquad
\psi_\lambda=1
\quad\text{on}\quad
[2\lambda^\beta,\pi-2\lambda^\beta],
$$
and
$$
a(\cdot,T_\lambda)
\leq
u_\lambda+\lambda^\alpha\psi_\lambda
\qquad\text{on }[0,\pi].
$$
\end{lemma}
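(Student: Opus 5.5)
We choose $T_\lambda$ so that Lemma~\ref{supdecaylem} has brought the maximum down to $1+\tfrac12\lambda^\alpha$, while the pole-growth estimate of Corollary~\ref{longtimeexist} still keeps $a$ below $u_\lambda$ near the poles. Concretely, take
$$
T_\lambda:=\frac{1}{4}\log\frac{2(M(0)-1)}{\lambda^{\alpha}},
$$
so that $M(T_\lambda)\le 1+\tfrac12\lambda^\alpha$. Since $e^{c_{\mathrm{pole}}T_\lambda}\le C\lambda^{-\alpha c_{\mathrm{pole}}/4}$, Corollary~\ref{longtimeexist} gives
$$
a(\theta,T_\lambda)\le C\lambda^{-\alpha c_{\mathrm{pole}}/4}\sin^2\theta \qquad\text{on }[0,\pi].
$$
The cutoff $\psi_\lambda$ is any smooth function with the stated support properties. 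We then split $[0,\pi]$ into three regions; by the symmetry $\theta\mapsto\pi-\theta$ of $u_\lambda$ we only treat $\theta\le\pi/2$.

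\emph{Pole region $\theta\in[0,2\lambda^\beta]$.} Here we ignore $\psi_\lambda\ge0$ and show $a\le u_\lambda$ directly. For $\lambda$ small and $\theta\le 2\lambda^\beta$ we have $r=\tan(\theta/2)\approx\theta/2$. Since $\beta<1$, $r^2\ll\lambda^2$ fails, so we use the exact form
$$
u_\lambda=\frac{1-\lambda^4}{1+\lambda^4+\lambda^2(r^2+r^{-2})}.
$$
For $r\le\lambda^\beta$ this gives $u_\lambda\ge c\,\min\{1,r^2/\lambda^2\}$, because $\lambda^2r^2$ is negligible and the denominator is comparable to $1+\lambda^2r^{-2}$.
\begin{itemize}
\item[(i)] If $r\le\lambda$, then $u_\lambda\ge c\,\theta^2/\lambda^2$. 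This dominates $C\lambda^{-\alpha c_{\mathrm{pole}}/4}\theta^2$ once $\alpha c_{\mathrm{pole}}/4<2$, which holds since $\alpha c_{\mathrm{pole}}<8\beta<8$.
\item[(ii)] If $\lambda\le r\le\lambda^\beta$, then $u_\lambda\ge c$, so we need $C\lambda^{-\alpha c_{\mathrm{pole}}/4}\lambda^{2\beta}$ to be small. This holds because $\alpha c_{\mathrm{pole}}/4<2\beta$, which is exactly the second constraint on $\alpha$.
\end{itemize}
Getting $a\le u_\lambda$ rather than merely $a\le c$ in case (ii) is the delicate point. The intended fix is that on $[\lambda^\beta,2\lambda^\beta]$ the pole bound already makes $a$ tiny, namely $O(\lambda^{2\beta-\alpha c_{\mathrm{pole}}/4})$. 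Meanwhile $u_\lambda=1-O(\lambda^2/r^2)-O(\lambda^2r^2)\ge 1-O(\lambda^{2-2\beta})$ there, which is close to $1$.

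\emph{Bulk region $\theta\in[2\lambda^\beta,\pi/2]$.} Here $\psi_\lambda=1$, and the same expansion gives $u_\lambda\ge 1-C\lambda^{2-2\beta}$. Hence
$$
u_\lambda+\lambda^\alpha\ge 1+\lambda^\alpha-C\lambda^{2-2\beta}\ge 1+\tfrac12\lambda^\alpha\ge M(T_\lambda)\ge a,
$$
using $\alpha<2-2\beta$ and $\lambda$ small. This is where the first constraint on $\alpha$ enters.

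\emph{Main obstacle.} The hard step is the transition annulus $[\lambda^\beta,2\lambda^\beta]$, together with the matching of the two exponent constraints, since the maximum bound alone does not suffice there. I expect the decisive estimate to be $a(\cdot,T_\lambda)\le C\lambda^{2\beta-\alpha c_{\mathrm{pole}}/4}\ll 1-C\lambda^{2-2\beta}\le u_\lambda$. This requires $\lambda$ small depending on $C$, $c_{\mathrm{pole}}$, $\beta$ and $\alpha$, which matches the dependence described in the statement. Finally we check that $T_\lambda>0$ for small $\lambda$, which is immediate from its definition.
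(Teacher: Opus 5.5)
Your proposal is correct and follows the paper's proof essentially verbatim: the same $T_\lambda=\frac14\log\bigl(2(M(0)-1)/\lambda^\alpha\bigr)$, the same bulk estimate $1-u_\lambda\le C\lambda^{2-2\beta}\le\frac12\lambda^\alpha$ (using $\alpha<2-2\beta$), and the same pole split at $r=\lambda$ (using $\alpha c_{\mathrm{pole}}/4<2\beta$). The transition annulus you flag as the main obstacle is not actually delicate. In case (ii) you already have $u_\lambda\ge c$ while $a=O(\lambda^{2\beta-\alpha c_{\mathrm{pole}}/4})=o(1)$, which gives $a\le u_\lambda$ directly; the paper simply takes $r\le C\lambda^\beta$ so that this one case covers the whole region $\theta\le 2\lambda^\beta$.
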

\begin{proof}
By Lemma~\ref{supdecaylem},
$$
M(t)-1
\leq
\bigl(M(0)-1\bigr)e^{-4t}
\qquad
(t\geq0).
$$
Thus, for sufficiently small $\lambda$, we may define
$$
T_\lambda
=
\frac14
\log\left(\frac{2(M(0)-1)}{\lambda^\alpha}\right)>0,
$$
so that $M(T_\lambda)\leq1+\frac12\lambda^\alpha$. We first consider the region $2\lambda^\beta\leq\theta\leq\pi-2\lambda^\beta$ where $\psi_\lambda=1$. On this region there exists $C,c>0$ such that $c\lambda^\beta\leq r:=\tan(\theta/2)\leq C\lambda^{-\beta}$. The explicit
formula for $u_\lambda$ gives
$$
1-u_\lambda
=
\frac{\lambda^2}{\lambda^2+r^2}
+
\frac{\lambda^2r^2}{1+\lambda^2r^2}
\leq
\frac{\lambda^2}{r^2}+\lambda^2r^2
\leq
C\lambda^{2-2\beta}.
$$
Since $\alpha<2-2\beta$, after decreasing $\lambda$ we have $1-u_\lambda\leq\frac12\lambda^\alpha$. Consequently,
$$
u_\lambda+\lambda^\alpha\psi_\lambda
\geq
1+\frac12\lambda^\alpha
\geq
M(T_\lambda).
$$
It remains to consider the two poles. By reflection it is
enough to treat $0\leq\theta\leq2\lambda^\beta$. On this interval
$r\leq C\lambda^\beta$, and
$$
u_\lambda(\theta)
=
\frac{(1-\lambda^4)r^2}
     {(\lambda^2+r^2)(1+\lambda^2r^2)}.
$$
Since $\lambda^2r^2=o(1)$, it follows that
$$
u_\lambda(\theta)
\geq
c\frac{r^2}{\lambda^2+r^2}
\geq
c\min\left\{\frac{r^2}{\lambda^2},1\right\}.
$$
On the other hand, the pole-growth estimate and the definition of
$T_\lambda$ give
$$
a(\theta,T_\lambda)
\leq
C\lambda^{-c_{\mathrm{pole}}\alpha/4}\sin^2\theta
\leq
C\lambda^{-c_{\mathrm{pole}}\alpha/4}r^2.
$$
If $0\leq r\leq\lambda$, then $u_\lambda\geq c\frac{r^2}{\lambda^2}$, and hence $a(\theta,T_\lambda)\leq u_\lambda(\theta)$ for small
$\lambda$, because $\frac{c_{\mathrm{pole}}\alpha}{4}<2\beta<2$. If instead $\lambda\leq r\leq C\lambda^\beta$, then
$u_\lambda\geq c$, whereas
$$
a(\theta,T_\lambda)
\leq
C\lambda^{2\beta-c_{\mathrm{pole}}\alpha/4}
=o(1)
$$
by the assumption $\alpha<8\beta/c_{\mathrm{pole}}$. 
\end{proof}

\begin{remark}\label{perturbationcutoff}
For an example of the cutoffs in Lemma~\ref{perturbboundlem}, let
$\chi\in C^\infty(\mathbb R)$ be nondecreasing and satisfy
$$
0\leq\chi\leq1,
\qquad
\chi(x)=0\ \text{for }x\leq1,
\qquad
\chi(x)=1\ \text{for }x\geq2,
$$
and define
$$
\psi_\lambda(\theta)
=
\chi\!\left(\frac{\theta}{\lambda^\beta}\right)
\chi\!\left(\frac{\pi-\theta}{\lambda^\beta}\right).
$$
Then $\psi_\lambda$ is symmetric and satisfies
$$
\int_0^\pi
\left(
|\partial_\theta\psi_\lambda|^2\sin\theta
+
\frac{\psi_\lambda^2}{\sin\theta}
\right)\,d\theta
\leq
C|\log\lambda|.
$$
Indeed, $\partial_\theta\psi_\lambda$ is supported in two collars of
length $O(\lambda^\beta)$, where
$$
|\partial_\theta\psi_\lambda|
\leq
C\lambda^{-\beta},
\qquad
\sin\theta\leq C\lambda^\beta,
$$
while
$$
\int_{\lambda^\beta}^{\pi-\lambda^\beta}
\frac{d\theta}{\sin\theta}
\leq
C|\log\lambda|.
$$
\end{remark}

With Lemma \ref{perturbboundlem} in hand, conditional on the results of the next section, we give the proof of the main result as stated in Theorem \ref{1.1}. 

\begin{proof}[Proof of Theorem \ref{1.1}]
By the reduction at the beginning of this subsection, for the upper bound it remains only to consider the
case $M(0)>1$. By Lemma \ref{perturbboundlem}, for every sufficiently small
$\lambda>0$ there exists $T_\lambda>0$ such that
$$
a(\cdot,T_\lambda)\leq u_\lambda+\lambda^\alpha\psi_\lambda.
$$
Let $\bar a$ denote the solution with initial data $u_\lambda+\lambda^\alpha\psi_\lambda$. For sufficiently small $\lambda$, Theorem \ref{eventualdropthm} gives a time
$t_*>0$ such that $\sup_{\theta\in[0,\pi]}\bar a(\theta,t_*)<1$. The comparison principle therefore yields $M(T_\lambda+t_*)<1$. Section~4.1 now gives a function $l(t)$, increasing to $1$ as $t\to\infty$, such that $a(\theta,t)\leq u_{\ell(t)}(\theta)$ for $t\geq t_0:=T_\lambda+t_*$.

In order to get smooth convergence, we now work with the smooth equivariant variable $b(\theta,t):=\frac{a(\theta,t)}{\sin^2\theta}$. One has
$$
\frac{u_\lambda(\theta)}{\sin^2\theta}
=
\frac{(1-\lambda^4)(1+r^2)^2}
{4(\lambda^2+r^2)(1+\lambda^2r^2)}.
$$
Since
$$
(\lambda^2+r^2)(1+\lambda^2r^2)
-
\lambda^2(1+r^2)^2
=
r^2(1-\lambda^2)^2\geq0,
$$
it follows that
$$
b(\theta,t)
\leq
\frac{1-\ell(t)^4}{4\ell(t)^2}.
$$
Hence 
$\limsup_{t\to\infty}
\sup_{\theta\in[0,\pi]}b(\theta,t)
\leq0$. 

Since $b(\cdot,t_0)$ is smooth on $S^6$, there is $C>0$ such
that
$$
a(\theta,t_0)\geq-C\sin^2\theta.
$$
Define
$$
\underline a(\theta,t)
:=
-Ce^{-6(t-t_0)}\sin^2\theta.
$$
A direct computation gives
$$
L(-h\sin^2\theta)
=
6h\sin^2\theta
+
12h^2\sin^2\theta
+
8h^3\sin^4\theta
\geq
6h\sin^2\theta
\qquad
(h\geq0).
$$
Taking $h(t)=Ce^{-6(t-t_0)}$, we have
$$
\partial_t\underline a
=
6Ce^{-6(t-t_0)}\sin^2\theta
\leq
L(\underline a).
$$
Thus $-Ce^{-6(t-t_0)}$ is a subsolution of the regularised equation \eqref{beqn} on $\mathbb S^6$ and, by comparison, $b(\theta,t)
\geq
-Ce^{-6(t-t_0)}$. Combining the upper and lower bounds gives
$$
\|b(\cdot,t)\|_{L^\infty(\mathbb S^6)}
\longrightarrow0.
$$
Since $b$ satisfies a smooth semilinear parabolic equation on the compact manifold $\mathbb S^6$, standard parabolic regularity upgrades $\|b(\cdot,t)\|_{L^\infty(\mathbb S^6)}\to0$ to $b(\cdot,t)\to0$ smoothly on $\mathbb S^6$. Since $a(\theta,t)=\sin^2\theta\,b(\theta,t)$, it follows that $a(\cdot,t)\to0$ smoothly on $[0,\pi]$.
\end{proof}

\section{Metastability}

The goal of this section is to prove that, for sufficiently small $\lambda_0$, the flow started from
$$
u_{\lambda_0}+\lambda_0^\alpha\psi_{\lambda_0}
$$
eventually satisfies
$$
\sup_{\theta\in[0,\pi]}a(\theta,t)<1.
$$

Throughout this section we work with the weighted inner product
$$
\langle f,g\rangle_w
:=
\int_0^\pi f(\theta)g(\theta)\sin\theta\,d\theta,
\qquad
L_w^p
:=
L^p\bigl((0,\pi),\sin\theta\,d\theta\bigr),
$$
and at times use the coordinate
$$
z=z(\theta):=\log\tan(\theta/2).
$$
When a scale $\lambda$ is fixed, we write
$$
s:=-\log\lambda.
$$
In these coordinates we have the explicit formula
\begin{equation}\label{cylindricformula}
u_\lambda(z)
=
Q(z+s)+Q(s-z)-1,
\qquad
Q(y):=\frac{1}{1+e^{-2y}}.
\end{equation}

As above, $C>0$ may change from line to line and is independent of sufficiently small $\lambda$. We write $A\asymp B$ to denote two-sided comparability.

In \S5.1 we introduce a localized tangent vector $\eta_\lambda$ and modulation coordinates, proving that every solution sufficiently close to the metastable family $\{u_\lambda\}$ admits a canonical decomposition
$$
a=u_{\lambda(t)}+v,
\qquad
\langle v,\eta_{\lambda(t)}\rangle_w=0.
$$
The problem is thereby reduced to controlling the modulation parameter $\lambda(t)$ and the error $v(t)$. In \S5.2 we establish the required coercivity estimates for the linearised operator, while in \S5.3 we prove decay of the error together with logarithmic control of the modulation parameter, ultimately showing that the solution passes below height $1$.

The analysis in this section is adapted from the metastability framework
of Carr--Pego \cite{carr1989metastable}.  The main proof ingredients follow
the strategy developed there, making the necessary adaptations for this more singular setting.

\subsection{Tubular Coordinates}

The initial condition $u_{\lambda_0}+\lambda_0^\alpha\psi_{\lambda_0}$ is symmetric under the reflection $\theta\mapsto\pi-\theta$. The following observation shows that this symmetry is preserved by the flow. This will be convenient throughout the remainder of the section, particularly in the spectral analysis.

\begin{lemma}
Let $a(\theta,t)$ be the smooth solution of the reduced Yang--Mills flow with initial data $a_0(\theta)$. If $a_0$ is symmetric about $\theta=\pi/2$ then $a(\cdot,t)$ is symmetric about $\theta=\pi/2$ for all $t\geq0$.
\end{lemma}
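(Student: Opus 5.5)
The plan is to combine the reflection invariance of \eqref{thePDE} with the uniqueness statement of Lemma~\ref{shrttime}. Define the reflected function
$$
\tilde a(\theta,t):=a(\pi-\theta,t).
$$
First I would check that $\tilde a$ again solves \eqref{thePDE}. Under $\theta\mapsto\pi-\theta$ we have $\partial_\theta\mapsto-\partial_\theta$, $\cot\theta\mapsto-\cot\theta$ and $\sin^2\theta\mapsto\sin^2\theta$. Hence the terms $a_{\theta\theta}$, $\cot\theta\,a_\theta$ and $f(a)/\sin^2\theta$ are each preserved, and so is $a_t$. Geometrically, this is the fact that $\iota$ is an isometry commuting with the $SO(4)$-action; since the Yang--Mills energy is insensitive to orientation, its gradient flow commutes with $\iota^*$.

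Next I would check that $\tilde a$ lies in the class where uniqueness holds. Set $\tilde b:=\tilde a/\sin^2\theta$, so that $\tilde b(\theta,t)=b(\pi-\theta,t)$. On $\mathbb S^6\subset\mathbb R^7$ this is the composition of $b$ with the isometric reflection $x_7\mapsto -x_7$. That reflection preserves zonal functions, the Laplacian, and the coefficient $\sin^2\theta=1-x_7^2$. Therefore $\tilde b$ is a smooth zonal solution of \eqref{beqn} on the same time interval as $b$. By Corollary~\ref{longtimeexist} that interval is $[0,\infty)$. The initial datum is $\tilde b(\cdot,0)=a_0(\pi-\theta)/\sin^2\theta=b(\cdot,0)$, using the assumed symmetry of $a_0$.

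By the uniqueness part of Lemma~\ref{shrttime}, $\tilde b=b$ for all $t\ge0$. Multiplying by $\sin^2\theta$ gives $a(\pi-\theta,t)=a(\theta,t)$, which is the claim.

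The only point needing care is that uniqueness is stated for the regularised equation on $\mathbb S^6$ rather than for \eqref{thePDE} on $(0,\pi)$. This is why the reflection should be carried out at the level of $b$ on $\mathbb S^6$, where it is simply an isometry and no boundary issues arise at the poles. Everything else is routine.
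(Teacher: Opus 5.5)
Your proposal is correct and follows the same route as the paper: define $\tilde a(\theta,t)=a(\pi-\theta,t)$, observe that the equation is invariant under $\theta\mapsto\pi-\theta$ and that the initial data agree, then conclude from the uniqueness in Lemma~\ref{shrttime} together with Corollary~\ref{longtimeexist}. Your extra step of carrying out the reflection on $b$, as the isometry $x_7\mapsto -x_7$ of $\mathbb S^6$, makes explicit that $\tilde a$ lies in the class where that uniqueness statement applies, a point the paper leaves implicit.
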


\begin{proof}
Set
$$
\widetilde a(\theta,t):=a(\pi-\theta,t).
$$
Since
$$
\partial_t\widetilde a(\theta,t)=a_t(\pi-\theta,t),
\qquad
\partial_\theta\widetilde a(\theta,t)=-a_\theta(\pi-\theta,t),
$$
$$
\partial_{\theta\theta}\widetilde a(\theta,t)
=
a_{\theta\theta}(\pi-\theta,t),
$$
and
$$
\sin(\pi-\theta)=\sin\theta,
\qquad
\cot(\pi-\theta)=-\cot\theta,
$$
the function $\widetilde a$ satisfies the same equation as $a$. Moreover,
$$
\widetilde a(\theta,0)
=
a_0(\pi-\theta)
=
a_0(\theta).
$$
The result follows from uniqueness established in Lemma \ref{shrttime} and Corollary \ref{longtimeexist}.
\end{proof}

We use the notation
$$
\tau_\lambda:=\partial_\lambda u_\lambda
$$
and
$$
\mathcal A_\lambda
=
-\partial_{\theta\theta}
-\cot\theta\,\partial_\theta
+
\frac{4\bigl(6u_\lambda(\theta)^2-6u_\lambda(\theta)+1\bigr)}
{\sin^2\theta}.
$$
Here $-\mathcal A_\lambda$ is the linearisation of $L$ around $u_\lambda$. We now define the localised tangent. Fix an even cutoff function $\zeta\in C_c^\infty(\mathbb R)$ such that
$$
0\leq\zeta\leq1,
\qquad
\zeta(z)=1\quad\text{for }|z|\leq R_\zeta,
\qquad
\zeta(z)=0\quad\text{for }|z|\geq2R_\zeta,
$$
where $R_\zeta>0$ is fixed. Define
$$
\zeta_\lambda(z)
:=
\zeta(z+s)
+
\zeta(s-z)
$$
and
$$
\eta_\lambda(\theta(z))
:=
\lambda^{-2}\zeta_\lambda(z)\tau_\lambda(\theta(z)).
$$
Thus $\eta_\lambda$ is supported in fixed-width $z$-neighbourhoods of the two transition layers at $\pm s$. 

The asymptotics as $\lambda\downarrow0$ of these objects will be crucial in the subsequent proofs. They are recorded in the following proposition and are all shown by direct computation from the explicit formulae for
$u_\lambda$, $\tau_\lambda$, and $\eta_\lambda$. We omit the proofs as they are not illuminating.

\begin{proposition}\label{norms}
For all sufficiently small $\lambda$,
$$
\|\tau_\lambda\|_{L_w^2}^2
=
\frac{16}{3}
+
O\left(
\lambda^2\log\frac1\lambda
\right),
\qquad
\|\tau_\lambda\|_{L^\infty([0,\pi])}
\leq
\frac{C}{\lambda},
$$
$$
|\tau_\lambda(\theta)|\sin^2\theta
\leq
C\lambda,
$$
$$
\langle\tau_\lambda,\eta_\lambda\rangle_w
\asymp
\lambda^{-2},
\qquad
\|\eta_\lambda\|_{L_w^1}
\leq
C\lambda^{-1},
\qquad
\|\partial_\lambda\eta_\lambda\|_{L_w^1}
\leq
C\lambda^{-2},
$$
$$
\|L(u_\lambda)\|_{L_w^2}
\leq
C\lambda^3,
\qquad
\|\mathcal A_\lambda\tau_\lambda\|_{L_w^2}
\leq
C\lambda^2.
$$
\end{proposition}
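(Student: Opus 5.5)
The plan is to work entirely in the cylindrical coordinate $z=\log\tan(\theta/2)$, in which $u_\lambda(z)=Q(z+s)+Q(s-z)-1$ and the weighted measure becomes $\sin\theta\,d\theta=\sech z\,dz$ (since $d\theta=\sech z\,dz$ and $\sin\theta=\sech z$). Each estimate then splits into contributions from the two transition layers near $z=\mp s$ and from the plateau/tails. By the reflection symmetry $\theta\mapsto\pi-\theta$ (that is, $z\mapsto-z$), it suffices to treat the layer at $z=-s$ and double the answer.

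\textbf{Step 1: tangent vector.} Differentiating in $s=-\log\lambda$ gives $\tau_\lambda=\partial_\lambda u_\lambda=-\lambda^{-1}\bigl(Q'(z+s)+Q'(s-z)\bigr)$, where $Q'(y)=\tfrac12\sech^2 y$. This immediately gives $\|\tau_\lambda\|_\infty\le C/\lambda$. For the second bound, use $\sin^2\theta=\sech^2 z$ and $\sech^2 z\,\sech^2(z+s)\le Ce^{-2s}=C\lambda^2$, which yields $|\tau_\lambda|\sin^2\theta\le C\lambda$. For the $L^2_w$ norm, write
\[
\|\tau_\lambda\|^2_{L^2_w}=2\lambda^{-2}\int Q'(z+s)^2\sech z\,dz+\text{cross term}.
\]
Near $z=-s$ we have $\sech z=2e^{-s}e^{y}(1+O(e^{2y-2s}))$ with $y=z+s$, so the leading term is
\[
4\lambda^{-1}\cdot\lambda\int Q'(y)^2e^{y}\,dy=4\int\tfrac14\sech^4 y\,e^{y}\,dy=\int\sech^4 y\cosh y\,dy.
\]
I will check the constant: $\int\sech^3=\pi/2$ is not it, so the quantity must be computed directly, and the target is $8/3$ per layer. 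The correction from $e^{2y-2s}$ and the region $|y|\sim s$ produces the $O(\lambda^2\log(1/\lambda))$ error; the logarithm arises where the integrand is borderline near $y\approx s$. The cross term is $O(\lambda^2 s)$ by the same estimate.

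\textbf{Step 2: cutoff quantities.} On the support of $\zeta(z+s)$ we have $\sech z\asymp\lambda$ and $\tau_\lambda\asymp\lambda^{-1}$. Hence
\[
\langle\tau_\lambda,\eta_\lambda\rangle_w=\lambda^{-2}\int\zeta_\lambda\tau_\lambda^2\sech z\,dz\asymp\lambda^{-2}\cdot\lambda^{-2}\cdot\lambda=\lambda^{-3}.
\]
This appears to contradict the stated $\lambda^{-2}$, so I will recheck the normalisation, in particular whether $|\tau_\lambda|\asymp 1$ on the layer after accounting for the $\lambda$ factors. Indeed, $\partial_\lambda Q(z-\log\lambda)=-\lambda^{-1}Q'$, so $\tau_\lambda\asymp\lambda^{-1}$ holds, and the count must be redone carefully. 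The $L^1_w$ bound is handled the same way: $\lambda^{-2}\cdot\lambda^{-1}\cdot\lambda$. For $\partial_\lambda\eta_\lambda$, the $\lambda$-derivative produces an extra factor $\lambda^{-1}$, coming from differentiating $\zeta(z+s)$ and the $\lambda$-powers.

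\textbf{Step 3: residuals.} By the identity of \S4.1, $L(u_\lambda)=-24\sin^{-2}\theta\,(1-A)(1-B)(A+B-1)$ with $A=Q(z+s)$ and $B=Q(s-z)$. In the layer at $-s$ we have $1-B\approx e^{-2(s-z)}\approx\lambda^4e^{2y}$ and $\sin^{-2}\theta\approx\lambda^{-2}e^{-2y}/4$. Integrating the square against $\sech z\approx\lambda e^{y}$ gives $\lambda^{4}\cdot\lambda$ times a convergent integral, so the norm is $O(\lambda^{5/2})$ at worst; tightening the tail exponents should give the stated $\lambda^3$. For $\mathcal A_\lambda\tau_\lambda$, differentiate $L(u_\lambda)$ in $\lambda$ to get $-\mathcal A_\lambda\tau_\lambda=\partial_\lambda L(u_\lambda)$, which reduces this bound to the previous computation with one extra factor $\lambda^{-1}$.

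The main obstacle is bookkeeping: getting the precise constant $16/3$ and the exact powers of $\lambda$. Near $z\approx0$ the two layers overlap and the weight is $O(1)$, so I must verify that the exponentially small interaction there does not spoil the powers; this is where the logarithmic loss originates.
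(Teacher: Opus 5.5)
Your overall strategy (pass to $z=\log\tan(\theta/2)$, split into the two layers at $z=\mp s$ plus plateau and tails, and compute directly) is what the paper has in mind; it omits the proof as a direct computation. Several pieces are right: $\tau_\lambda=-\lambda^{-1}\widehat\tau_\lambda$, the $L^\infty$ bound, the estimate $\sech^2z\,\sech^2(z+s)\le C\lambda^2$ giving $|\tau_\lambda|\sin^2\theta\le C\lambda$, and the reduction $-\mathcal A_\lambda\tau_\lambda=\partial_\lambda L(u_\lambda)=-\lambda^{-1}\partial_sL(u_\lambda)$.

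However, every integral estimate in your proposal rests on a wrong change of measure. The correct identity is $\sin\theta\,d\theta=\sech z\cdot\sech z\,dz=\sech^2z\,dz$, not $\sech z\,dz$; you dropped one factor. On a layer $\sech^2z\asymp\lambda^2$ rather than $\lambda$, so each of your counts is off by exactly one power of $\lambda$. This single slip is the source of all the contradictions you ran into:
\begin{itemize}
\item $\lambda^{-3}$ instead of $\lambda^{-2}$ for $\langle\tau_\lambda,\eta_\lambda\rangle_w$;
\item $\lambda^{-2}$ instead of $\lambda^{-1}$ for $\|\eta_\lambda\|_{L^1_w}$;
\item $\lambda^{5/2}$ instead of $\lambda^3$ for $\|L(u_\lambda)\|_{L^2_w}$, and hence $\lambda^{3/2}$ for $\mathcal A_\lambda\tau_\lambda$;
\item a leading term of order $\lambda^{-1}$ for $\|\tau_\lambda\|^2_{L^2_w}$, which you forced to $O(1)$ with an unexplained factor $\lambda$.
\end{itemize}
You flagged these but attributed them to bookkeeping, tail exponents, or the equatorial overlap, and left $16/3$ uncomputed. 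As written, none of the integral bounds is established.

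With the correct measure the argument closes.

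\textbf{The $L^2_w$ norm.} Per layer,
$\lambda^{-2}\int q(z+s)^2\sech^2z\,dz=\int q(y)^2\,4e^{2y}(1+\lambda^2e^{2y})^{-2}\,dy$.
Its $\lambda\to0$ limit is $\int\sech^4y\,e^{2y}\,dy=8\int_0^\infty u^2(1+u)^{-4}\,du=8/3$, so two layers give $16/3$. The deficit is at most $8\lambda^2\int_{-\infty}^sq^2e^{4y}\,dy+\int_s^\infty4q^2e^{2y}\,dy=O(\lambda^2s)$. The logarithm comes from the plateau $0<y<s$, where the correction integrand is $\approx32\lambda^2$ over a region of length $s$. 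The overlap near $z=0$ and the cross term $2\lambda^{-2}\int q(z+s)q(s-z)\sech^2z\,dz$ contribute only $O(\lambda^2)$.

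\textbf{The cutoff quantities.} On the windows where $\zeta_\lambda\neq0$ one has $\sech^2z\asymp\lambda^2$ and $\widehat\tau_\lambda\asymp1$. This gives $\lambda^{-4}\cdot\lambda^2$ for the pairing and $\lambda^{-3}\cdot\lambda^2$ for the $L^1_w$ norm. For $\partial_\lambda\eta_\lambda$, use $\partial_\lambda=-\lambda^{-1}\partial_s$; since $\partial_s(e^{3s}\zeta_\lambda\widehat\tau_\lambda)=O(e^{3s})$ there, you get $\lambda^{-4}\cdot\lambda^2$.

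\textbf{The residual.} Write $|L(u_\lambda)|^2\sech^2z=576\cosh^2z\,(1-A)^2(1-B)^2u_\lambda^2$. Using $1-Q(y)\le e^{-2y}$ and $u_\lambda\le Q(z+s)\le e^{2(z+s)}$, this is $\le Ce^{2|z|-8s}$ on $|z|\le s$ and $\le Ce^{6z}$ on $z<-s$. Hence its integral is $O(\lambda^6)$.

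\textbf{The term $\mathcal A_\lambda\tau_\lambda$.} Your $\partial_s$ trick then works. You need to check that $\partial_s$ replaces $1-A$, $1-B$, $u_\lambda$ by $-q(z+s)$, $-q(s-z)$, $q(z+s)+q(s-z)$. These obey the same exponential envelopes because $q(y)\le2e^{-2|y|}$, so the same bound holds with the extra factor $\lambda^{-1}$.
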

\noindent
The estimate
$$
\|\mathcal A_\lambda\tau_\lambda\|_{L_w^2}
\leq
C\lambda^2
$$
shows that $\tau_\lambda$ is an approximate kernel element for the linearised operator when $\lambda$ is small. This can be seen as a key source of the metastable behaviour.

\begin{lemma}\label{tubeee_eta}
Fix $\delta>0$. There exists $\lambda_*>0$ such that the following holds. Define
$$
\mathcal U_{\lambda_*,\delta}
:=
\left\{
a\in L^\infty([0,\pi]):
\ \|a-u_\mu\|_{L^\infty}<\mu^\delta
\text{ for some }\mu\in(0,\lambda_*/2]
\right\}.
$$
For every $a\in\mathcal U_{\lambda_*,\delta}$ with say $\|a-u_{\mu_0}\|_{L^\infty}<\mu_0^\delta$, there is a unique zero $\Lambda(a)\in(0,\lambda_*]$ of
$\Gamma(a,\cdot):=
\langle a-u_\lambda,\eta_\lambda\rangle_w$ satisfying
\begin{equation}\label{witnessbound}
|\Lambda(a)-\mu_0|
\leq
C\mu_0^{1+\delta}.
\end{equation}
The map $a\mapsto\Lambda(a)$ is locally $C^1$.

Let $a(\theta,t)$ be a solution of \eqref{thePDE}. On the maximal
interval $[0,T)$ for which
$$
a(\cdot,t)\in\mathcal U_{\lambda_*,\delta},
$$
set
$$
\lambda(t):=\Lambda(a(\cdot,t)),
\qquad
v(\theta,t):=a(\theta,t)-u_{\lambda(t)}(\theta).
$$
Then $\lambda$ is continuous on $[0,T)$, is $C^1$ on
$(0,T)$, and
$$
\langle v(t),\eta_{\lambda(t)}\rangle_w=0,
\qquad
\|v(\cdot,t)\|_{L^\infty}
\leq
C\lambda(t)^\delta.
$$
\end{lemma}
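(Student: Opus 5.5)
The plan is to apply the implicit function theorem in one variable to $\Gamma(a,\lambda)=\langle a-u_\lambda,\eta_\lambda\rangle_w$, using the estimates of Proposition~\ref{norms}.

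\textbf{Step 1: $\Gamma$ is strictly monotone in $\lambda$.} Differentiating gives
$$
\partial_\lambda\Gamma(a,\lambda)=-\langle\tau_\lambda,\eta_\lambda\rangle_w+\langle a-u_\lambda,\partial_\lambda\eta_\lambda\rangle_w .
$$
By Proposition~\ref{norms}, the first term has size $\asymp\lambda^{-2}$ and sign $-1$. Indeed $\tau_\lambda\le0$ and $\eta_\lambda=\lambda^{-2}\zeta_\lambda\tau_\lambda$, so $\langle\tau_\lambda,\eta_\lambda\rangle_w>0$. The second term is at most $\|a-u_\lambda\|_{L^\infty}\,C\lambda^{-2}$.

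For $\lambda$ in a window $|\lambda-\mu_0|\le K\mu_0^{1+\delta}$, I bound $\|u_\lambda-u_{\mu_0}\|_{L^\infty}\le\sup|\tau|\,|\lambda-\mu_0|\le C\mu_0^{-1}K\mu_0^{1+\delta}=CK\mu_0^\delta$. Hence $\|a-u_\lambda\|_{L^\infty}\le(1+CK)\mu_0^\delta$. This is small once $\lambda_*$ is small, so on the window $\partial_\lambda\Gamma\le -c\lambda^{-2}$.

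\textbf{Step 2: a sign change across the window.} At $\lambda=\mu_0$,
$$
|\Gamma(a,\mu_0)|\le\|a-u_{\mu_0}\|_{L^\infty}\|\eta_{\mu_0}\|_{L^1_w}\le C\mu_0^{\delta-1}.
$$
Monotonicity with slope $\lesssim-c\mu_0^{-2}$ then forces a zero within distance $C\mu_0^{1+\delta}$ of $\mu_0$. Choosing $K$ larger than this constant, $\Gamma$ changes sign inside the window, and strict monotonicity gives uniqueness there. This yields \eqref{witnessbound} and $\Lambda(a)\le\mu_0+C\mu_0^{1+\delta}\le\lambda_*$.

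\textbf{Step 3: $\Lambda$ is well defined and $C^1$.} Uniqueness is meant within the window; I will phrase it so. Two different witnesses $\mu_0,\mu_1$ give overlapping windows, since $u_{\mu}$ determines $\mu$ up to $O(\mu^{1+\delta})$: the identity $\|u_{\mu_0}-u_{\mu_1}\|_\infty<2\max\mu^\delta$ plus $|\tau_\lambda|\gtrsim\lambda^{-1}$ near the layers gives this. So the zero is the same. Because $\Gamma$ is $C^1$ in $(a,\lambda)$, affine in $a$, and has $\partial_\lambda\Gamma\neq0$, the implicit function theorem gives local $C^1$ dependence of $\Lambda$ on $a\in L^\infty$.

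\textbf{Step 4: the flow.} The solution $t\mapsto a(\cdot,t)$ is continuous into $L^\infty$; its $b$-form is continuous on $\mathbb S^6$. For $t>0$ it is $C^1$ into $L^\infty$ by smoothness. Composing with $\Lambda$ gives $\lambda\in C^0[0,T)\cap C^1(0,T)$, and the orthogonality $\langle v,\eta_{\lambda(t)}\rangle_w=0$ holds by construction. For the bound, $\|v\|_\infty\le\|a-u_{\mu_0}\|_\infty+C\mu_0^{-1}|\lambda-\mu_0|\le C\mu_0^\delta\le C\lambda(t)^\delta$, using $\lambda\asymp\mu_0$.

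The main obstacle is Step 1: controlling $\langle a-u_\lambda,\partial_\lambda\eta_\lambda\rangle_w$ uniformly on the whole window. This requires the cross-scale estimate $\|u_\lambda-u_{\mu_0}\|_\infty\lesssim\mu_0^{-1}|\lambda-\mu_0|$, which follows from $\|\tau\|_\infty\le C/\lambda$ with $\lambda\asymp\mu_0$ on the window.
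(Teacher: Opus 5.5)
Your proposal is correct and follows essentially the same route as the paper. That route is a window of width $K\mu_0^{1+\delta}$ on which $\partial_\lambda\Gamma\le -c\mu_0^{-2}$, then a sign change from $|\Gamma(a,\mu_0)|\le C\mu_0^{\delta-1}$, then witness-independence via profile separation, and finally the implicit function theorem; your union of two overlapping windows works as well as the paper's enlarged window $J_{\mu_0}$. The one step you should write out is the separation estimate in Step 3: since $\mu_0\asymp\mu_1$ is not known a priori, take $\mu_0\le\mu_1$, integrate $\tau_\lambda$ in $\lambda$ at the fixed point $z=\log\mu_0$, and use $\tau_\lambda\le0$ to reduce to $\mu_1\le e\mu_0$; this gives the paper's bound $\|u_{\mu_0}-u_{\mu_1}\|_{L^\infty}\ge c\min\{1,|\log(\mu_1/\mu_0)|\}$.
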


\begin{proof}
Fix $a\in\mathcal U_{\lambda_*,\delta}$, and choose an admissible
scale $\mu_0\in(0,\lambda_*/2]$, meaning that $\|a-u_{\mu_0}\|_{L^\infty}<\mu_0^\delta$. Let $K\geq1$ be a constant, independent of $a$ and
$\mu_0$, to be fixed below, and consider
$$
I_{\mu_0}
:=
\left[
\mu_0-K\mu_0^{1+\delta},
\mu_0+K\mu_0^{1+\delta}
\right].
$$
After decreasing $\lambda_*$, every $\mu\in I_{\mu_0}$
satisfies $\frac12\mu_0\leq\mu\leq2\mu_0$. Proposition~\ref{norms} and the mean value theorem therefore give
$$
\|u_\mu-u_{\mu_0}\|_{L^\infty}
\leq
|\mu-\mu_0|
\sup_{\xi\in I_{\mu_0}}\|\tau_\xi\|_{L^\infty}
\leq
C K\mu_0^\delta.
$$
Consequently,
\begin{equation}\label{aclose}
\|a-u_\mu\|_{L^\infty}
\leq
K'\mu_0^\delta,
\end{equation}
where $K'$ depends only on $K$ and the constants in
Proposition~\ref{norms}.

Differentiating $\Gamma$ in $\mu$ gives
$$
\partial_\mu \Gamma(a,\mu)
=
-\langle\tau_\mu,\eta_\mu\rangle_w
+
\langle a-u_\mu,\partial_\mu\eta_\mu\rangle_w.
$$
By Proposition \ref{norms}, there are fixed constants
$c_1,C_1>0$ such that
$$
\langle\tau_\mu,\eta_\mu\rangle_w\geq c_1\mu^{-2},
\qquad
\|\partial_\mu\eta_\mu\|_{L_w^1}\leq C_1\mu^{-2}.
$$
Using \eqref{aclose} and $\mu\asymp\mu_0$,
we obtain
$$
\partial_\mu \Gamma(a,\mu)
\leq
-c_1\mu^{-2}
+
C_1K'\mu_0^\delta\mu^{-2}.
$$
Once $K$ has been fixed, we may decrease $\lambda_*$, depending
only on $\delta$ and the constants above, so that
\begin{equation}\label{moddecreaseunifrm}
\partial_\mu \Gamma(a,\mu)
\leq
-c_2\mu_0^{-2}
\end{equation}
for some fixed $c_2>0$.

At the admissible scale $\mu_0$,
$$
|\Gamma(a,\mu_0)|
\leq
\|a-u_{\mu_0}\|_{L^\infty}
\|\eta_{\mu_0}\|_{L_w^1}
\leq
C_2\mu_0^{\delta-1},
$$
where $C_2$ is independent of $a$ and $\mu_0$. Choose
$K$ so large that $Kc_2>2C_2$. By \eqref{moddecreaseunifrm},
$$
\begin{aligned}
\Gamma\bigl(a,\mu_0-K\mu_0^{1+\delta}\bigr)
&\geq
\Gamma(a,\mu_0)+Kc_2\mu_0^{\delta-1}>0,\\
\Gamma\bigl(a,\mu_0+K\mu_0^{1+\delta}\bigr)
&\leq
\Gamma(a,\mu_0)-Kc_2\mu_0^{\delta-1}<0.
\end{aligned}
$$
The intermediate value theorem gives a zero $\Lambda_{\mu_0}(a)\in I_{\mu_0}$. It is unique in this interval by
\eqref{moddecreaseunifrm}, and
\begin{equation}\label{localrootbnd}
|\Lambda_{\mu_0}(a)-\mu_0|
\leq
K\mu_0^{1+\delta}.
\end{equation}

We next show that the zero does not depend on the admissible scale
used to construct it. The formula \eqref{cylindricformula} implies that, for sufficiently small $\lambda,\mu$,
\begin{equation}\label{profiletubeeq}
\|u_\lambda-u_\mu\|_{L^\infty}
\geq
c\min\left\{
1,\left|\log\frac{\mu}{\lambda}\right|
\right\}.
\end{equation}
Indeed, by symmetry in $\lambda$ and $\mu$ we may assume $\mu\leq\lambda$. Evaluating at
$z=\log\lambda$ gives
$$
u_\mu(\log\lambda)-u_\lambda(\log\lambda)
\geq
Q\left(\log\frac{\lambda}{\mu}\right)-Q(0),
$$
from which \eqref{profiletubeeq} follows.

Suppose that $\mu_0$ and $\mu_1$ are both admissible
scales for $a$. Then
$$
\|u_{\mu_0}-u_{\mu_1}\|_{L^\infty}
\leq
\mu_0^\delta+\mu_1^\delta.
$$
After decreasing $\lambda_*$, the right-hand side is smaller than
the fixed constant in the first alternative of
\eqref{profiletubeeq}. Hence
$$
\left|\log\frac{\mu_1}{\mu_0}\right|
\leq
C(\mu_0^\delta+\mu_1^\delta).
$$
It follows first that $\mu_1\asymp\mu_0$, and then that
\begin{equation}\label{scalesclose}
|\mu_1-\mu_0|
\leq
C\mu_0^{1+\delta}.
\end{equation}

By \eqref{localrootbnd} and
\eqref{scalesclose}, both locally constructed zeros
lie in an interval
$$
J_{\mu_0}
=
\left[
\mu_0-C\mu_0^{1+\delta},
\mu_0+C\mu_0^{1+\delta}
\right],
$$
after increasing $C$ if necessary. For
$\mu\in J_{\mu_0}$, one still has
$\mu\asymp\mu_0$ and $\|a-u_\mu\|_{L^\infty}
\leq
C\mu_0^\delta$. The preceding calculation therefore gives, after one final decrease
of $\lambda_*$,
$$
\partial_\mu \Gamma(a,\mu)
\leq
-c\mu_0^{-2}
\qquad
(\mu\in J_{\mu_0}).
$$
Thus $\Gamma(a,\cdot)$ is strictly decreasing on
$J_{\mu_0}$, and the two zeros agree.

We denote their common value by $\Lambda(a)$. Estimate
\eqref{witnessbound} follows from
\eqref{localrootbnd}, and decreasing $\lambda_*$ ensures
that
$$
0<\Lambda(a)\leq\lambda_*.
$$
Since
$$
\partial_\mu \Gamma(a,\Lambda(a))\neq0,
$$
the implicit function theorem shows that $a\mapsto\Lambda(a)$ is
locally $C^1$.

We now apply the construction along the flow. Set
$$
\lambda(t):=\Lambda(a(\cdot,t)),
\qquad
v(\theta,t):=a(\theta,t)-u_{\lambda(t)}(\theta).
$$
By definition $\langle v(t),\eta_{\lambda(t)}\rangle_w=0$. The continuity of $a(\cdot,t)$ in $L^\infty$, together with the
local $C^1$-dependence of $\Lambda$, gives continuity of
$\lambda$. Since the solution is smooth for $t>0$, the implicit
function theorem also gives $\lambda\in C^1((0,T))$.

Finally, for each $t<T$, choose an admissible scale
$\mu_t\in(0,\lambda_*/2]$. Then
$$
\|a(\cdot,t)-u_{\mu_t}\|_{L^\infty}<\mu_t^\delta
$$
and
$$
|\lambda(t)-\mu_t|
\leq
C\mu_t^{1+\delta}.
$$
In particular, $\lambda(t)\asymp\mu_t$. Proposition~\ref{norms} and
the mean value theorem give
$$
\|u_{\lambda(t)}-u_{\mu_t}\|_{L^\infty}
\leq
C\frac{|\lambda(t)-\mu_t|}{\mu_t}
\leq
C\mu_t^\delta.
$$
Therefore
$$
\begin{aligned}
\|v(\cdot,t)\|_{L^\infty}
&\leq
\|a(\cdot,t)-u_{\mu_t}\|_{L^\infty}
+
\|u_{\mu_t}-u_{\lambda(t)}\|_{L^\infty}\\
&\leq
C\mu_t^\delta
\leq
C\lambda(t)^\delta.
\end{aligned}
$$
\end{proof}

\begin{remark}\label{initialscalerem}
For the initial data $a_0
=
u_{\lambda_0}
+
\lambda_0^\alpha\psi_{\lambda_0}$, the support of $\eta_{\lambda_0}$ is contained in regions where $\theta\lesssim\lambda_0$ or $\pi-\theta\lesssim\lambda_0$. Since $\beta<1$, these regions lie inside the set where
$\psi_{\lambda_0}=0$, for sufficiently small $\lambda_0$.
Hence
$$
\langle a_0-u_{\lambda_0},\eta_{\lambda_0}\rangle_w=0.
$$
By the uniqueness in Lemma~\ref{tubeee_eta},
$$
\lambda(0)=\lambda_0,
\qquad
v(\theta,0)=\lambda_0^\alpha\psi_{\lambda_0}(\theta).
$$
\end{remark}

\subsection{Spectral Analysis}
We first give the functional-analytic formulation of
$\mathcal A_\lambda$. In the coordinate $z=\log\tan\frac{\theta}{2}$ we have
$$
\sin\theta=\sech z,
\qquad
d\theta=\sech z\,dz,
$$
and we will write
$$
V(z):=v(\theta(z)).
$$
Then
\begin{equation}
\|v\|_{L_w^2}^2
=
\int_{\mathbb R}|V(z)|^2\sech^2z\,dz.
\end{equation}
Thus $L_w^2$ is identified with $L^2(\mathbb R,\sech^2z\,dz)$. Writing
$$
W_\lambda(z)
:=
4\bigl(6u_\lambda(\theta(z))^2-6u_\lambda(\theta(z))+1\bigr),
\qquad
\mathscr L_\lambda:=-\partial_z^2+W_\lambda,
$$
we have
\begin{equation}
\mathcal A_\lambda v
=
\cosh^2z\,\mathscr L_\lambda V.
\end{equation}
Thus the eigenvalue equation
$\mathcal A_\lambda v=\mu v$ becomes
$$
-V''+W_\lambda V=\mu\sech^2z\,V.
$$
The quadratic form associated with $\mathcal A_\lambda$ will be denoted by
\begin{equation}
\mathcal E_\lambda(v)
:=
\langle\mathcal A_\lambda v,v\rangle_w
=
\int_{\mathbb R}
\left(
|V'(z)|^2+W_\lambda(z)|V(z)|^2
\right)\,dz.
\end{equation}
When working in $z$-coordinates we use the same notation $\mathcal E_\lambda(V)$ for the right-hand side.
We write
$$
\mathcal H
:=
\left\{
v:
\int_0^\pi
\left(
|v_\theta|^2\sin\theta
+
\frac{v^2}{\sin\theta}
\right)d\theta<\infty
\right\};
$$
this is motivated by 
\begin{equation}\label{H1identity}
\|v\|_{\mathcal H}^2
=
\int_{\mathbb R}
\left(
|V'|^2+V^2
\right)dz
=
\|V\|_{H^1(\mathbb R)}^2.
\end{equation}

\begin{lemma}
For each $\lambda>0$, the form $\mathcal E_\lambda$, with form domain $\mathcal H$, is densely
defined, closed, and bounded from below on $L_w^2$. Consequently,
there exists a unique self-adjoint operator, denoted again by
$\mathcal A_\lambda$, associated with $\mathcal E_\lambda$. In $z$-coordinates, on $C_c^\infty(\mathbb R)$ it is given by $V\mapsto\cosh^2z\,\mathscr L_\lambda V$.
\end{lemma}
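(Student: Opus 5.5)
The whole argument can be carried out in the $z$-coordinate, where everything becomes a standard Schrödinger form on $\mathbb R$. By \eqref{H1identity}, the map $v\mapsto V=v\circ\theta$ identifies $\mathcal H$ isometrically with $H^1(\mathbb R)$ and $L_w^2$ with $L^2(\mathbb R,\sech^2 z\,dz)$. So I will work with the form
$$
\mathcal E_\lambda(V)=\int_{\mathbb R}\bigl(|V'|^2+W_\lambda|V|^2\bigr)\,dz
$$
on $H^1(\mathbb R)$, regarded as a form in the Hilbert space $L^2(\sech^2z\,dz)$.

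\emph{Density.} $C_c^\infty(\mathbb R)$ lies in $H^1(\mathbb R)$ and is dense in $L^2(\sech^2z\,dz)$, since $\sech^2 z\,dz$ is a finite, locally comparable-to-Lebesgue measure. Hence $\mathcal H$ is dense in $L_w^2$.

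\emph{Boundedness and lower bound.} Since $0\le u_\lambda\le 1$, the potential $W_\lambda=4(6u_\lambda^2-6u_\lambda+1)$ is bounded, with $-2\le W_\lambda\le 4$. Moreover $u_\lambda\to0$ as $z\to\pm\infty$, so $W_\lambda\to 4$ there. Fix $R$ with $W_\lambda\ge 2$ for $|z|\ge R$. On $[-R,R]$ the one-dimensional Sobolev inequality gives, for any $\epsilon>0$,
$$
\sup_{[-R,R]}|V|^2\le \epsilon\|V'\|_{L^2}^2+C_{\epsilon,R}\|V\|_{L^2([-R,R])}^2,
$$
and on $[-R,R]$ the weight $\sech^2 z$ is bounded below, so the last term is controlled by $C\|V\|_{L^2(\sech^2)}^2$. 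Consequently
$$
\mathcal E_\lambda(V)\ \ge\ \tfrac12\|V'\|_2^2+2\|V\|_{L^2(|z|>R)}^2-C\|V\|_{L^2(\sech^2)}^2
\ \ge\ c\|V\|_{H^1}^2-C\|V\|_{L^2(\sech^2)}^2 .
$$
This gives both semiboundedness and the fact that, for $M$ large, the norm $\mathcal E_\lambda(V)+M\|V\|_{L^2(\sech^2)}^2$ is equivalent to $\|V\|_{H^1}^2$. The upper bound is immediate from $|W_\lambda|\le4$ and $\sech^2\le1$.

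\emph{Closedness.} Closedness is then equivalent to $H^1(\mathbb R)$ being complete, which it is. Concretely, if $V_n$ is Cauchy in the form norm and $V_n\to V$ in $L^2(\sech^2)$, then $V_n$ is Cauchy in $H^1$, converges there to some $\tilde V$, and $\tilde V=V$ a.e. because $H^1$-convergence implies weighted-$L^2$ convergence.

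\emph{The operator.} Kato's representation theorem (first representation theorem) now yields a unique self-adjoint, bounded-below operator $\mathcal A_\lambda$ on $L_w^2$ with $\langle\mathcal A_\lambda v,\phi\rangle_w=\mathcal E_\lambda(v,\phi)$ for all $\phi\in\mathcal H$. To identify it on $C_c^\infty$, take $V\in C_c^\infty(\mathbb R)$, set $F:=\cosh^2z\,(-V''+W_\lambda V)$, which is compactly supported, bounded, and hence lies in $L^2(\sech^2)$, and let $\Phi\in H^1$. Integrating by parts,
$$
\mathcal E_\lambda(V,\Phi)=\int(-V''+W_\lambda V)\Phi\,dz=\int F\,\Phi\,\sech^2z\,dz .
$$
So $V$ lies in the operator domain and $\mathcal A_\lambda V=F$. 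Transforming back to $\theta$ recovers the earlier formula $\mathcal A_\lambda v=\cosh^2z\,\mathscr L_\lambda V$.

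I expect the only delicate point to be the lower bound. The weight $\sech^2$ decays while the form carries unweighted $\int|V|^2$. This mismatch is resolved precisely because $W_\lambda$ is positive, close to $4$, at both ends, and the interior region is compact.
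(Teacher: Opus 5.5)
Your proposal is correct and follows essentially the same route as the paper. You show that $\mathcal E_\lambda+C\|\cdot\|_{L_w^2}^2$ is equivalent to the $H^1(\mathbb R)$-norm, using $W_\lambda\ge2$ outside a compact interval and the positive lower bound of $\sech^2z$ inside it; you then deduce closedness and semiboundedness, apply the representation theorem, and identify the operator on $C_c^\infty(\mathbb R)$ by integrating by parts.

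Two minor remarks. First, $0\le u_\lambda\le1$ (and hence $|W_\lambda|\le4$) holds only for $\lambda\le1$; for $\lambda>1$ one has $u_\lambda\in(-1,0]$ and $4\le W_\lambda<52$. The argument only uses that $W_\lambda$ is bounded and tends to $4$, so nothing breaks. Second, the Sobolev interpolation on $[-R,R]$ is unnecessary, since $\int_{-R}^R V^2\,dz\le\cosh^2R\,\|V\|_{L^2(\sech^2z\,dz)}^2$ directly.
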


\begin{proof}
Since $W_\lambda(z)\to4$ as $|z|\to\infty$, there exists a compact
interval $K_\lambda\subset\mathbb R$ such that $W_\lambda(z)\ge2$ for $z\notin K_\lambda$. On $K_\lambda$, the weight $\sech^2 z$ is bounded below by a positive
constant. Hence there exist constants $c_1(\lambda),C_1(\lambda)>0$ such
that
$$
\mathcal E_\lambda(V)
+
C_1(\lambda)\|v\|_{L_w^2}^2
\ge
c_1(\lambda)\|V\|_{H^1(\mathbb R)}^2.
$$
Since $W_\lambda\in L^\infty(\mathbb R)$ and $\sech^2z\leq1$,
$$
\mathcal E_\lambda(V)
+
C_1(\lambda)\|v\|_{L_w^2}^2
\le
C_2(\lambda)
\|V\|_{H^1(\mathbb R)}^2
$$
after increasing $C_1(\lambda)$. Thus the shifted form norm is
equivalent to the $H^1$-norm. In particular,
$\mathcal E_\lambda$ is closed and bounded from below.
The representation theorem therefore yields a unique self-adjoint
operator associated with $\mathcal E_\lambda$. An integration by
parts shows that, on $C_c^\infty(\mathbb R)$,
$$
(\mathcal A_\lambda v)(\theta(z))
=
\cosh^2z\,\mathscr L_\lambda V.
$$
\end{proof}

We now study the two transition layers. Recall $Q$ from \eqref{cylindricformula} and set
$$
q(y):=Q'(y)=\frac12\sech^2y.
$$
Then
$$
u_\lambda(\theta(z))
=
Q(z+s)+Q(s-z)-1.
$$
Define
$$
W_*(y)
:=
4\bigl(6Q(y)^2-6Q(y)+1\bigr)
=
4-6\sech^2y
$$
and
$$
\mathscr L_*:=-\partial_y^2+W_*(y).
$$
Differentiating the equation for $Q$ gives
$$
\mathscr L_*q=0,
\qquad
q>0.
$$
\begin{lemma}
\label{singlefrontlem}
There exists $c>0$ such that, for every $\phi\in H^1(\mathbb R)$
satisfying
$$
\int_{\mathbb R}\phi(y)q(y)\,dy=0,
$$
one has
$$
\int_{\mathbb R}
\left(
|\phi'(y)|^2+W_*(y)\phi(y)^2
\right)dy
\geq
c\|\phi\|_{H^1(\mathbb R)}^2.
$$
\end{lemma}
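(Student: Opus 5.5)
The plan is the standard spectral-gap argument for the Pöschl--Teller operator $\mathscr L_*=-\partial_y^2+4-6\sech^2y$, followed by an upgrade from $L^2$ to $H^1$ coercivity.

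\textbf{Step 1: locate the spectrum.} Since $W_*(y)\to4$ as $|y|\to\infty$ and $4-W_*=6\sech^2y$ is a relatively compact perturbation, Weyl's theorem gives $\sigma_{\mathrm{ess}}(\mathscr L_*)=[4,\infty)$. Below $4$ there are only finitely many eigenvalues. We already know $\mathscr L_*q=0$ with $q>0$. By Sturm--Liouville theory in one dimension, a positive eigenfunction is the ground state and the eigenvalues are simple. Hence $0$ is the lowest eigenvalue, with eigenspace spanned by $q$.

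\textbf{Step 2: find the second eigenvalue.} This is the only point needing an explicit check. The potential $-6\sech^2y=-\ell(\ell+1)\sech^2 y$ with $\ell=2$ is reflectionless Pöschl--Teller. Its bound states for $-\partial_y^2-6\sech^2y$ are $-4$ (eigenfunction $\sech^2y$) and $-1$ (eigenfunction $\sech y\tanh y$). Directly, with $\phi_1=\sech y\tanh y$, one computes $-\phi_1''-6\sech^2y\,\phi_1=-\phi_1$, so $\mathscr L_*\phi_1=3\phi_1$. This is consistent: $\phi_1$ is odd with one zero, so it is the second eigenfunction. Therefore on $q^\perp$ (orthogonality in $L^2(\mathbb R,dy)$) the spectrum of $\mathscr L_*$ lies in $[3,\infty)$. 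This gives
$$\int(|\phi'|^2+W_*\phi^2)\,dy\ge 3\|\phi\|_{L^2}^2.$$
If one wants to avoid the explicit formula, it suffices to argue abstractly that the second eigenvalue, or the bottom of the essential spectrum, is strictly positive. This follows from simplicity of the ground state together with the fact that $4>0$, so any gap $\gamma>0$ works.

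\textbf{Step 3: upgrade to $H^1$.} Write $B(\phi)=\int(|\phi'|^2+W_*\phi^2)$. Since $|W_*|\le 6$, we have $\int|\phi'|^2=B(\phi)-\int W_*\phi^2\le B(\phi)+6\|\phi\|_{L^2}^2$. For $\epsilon\in(0,1)$, split
$$B(\phi)=\epsilon B(\phi)+(1-\epsilon)B(\phi)\ge \epsilon\|\phi'\|^2-6\epsilon\|\phi\|^2+3(1-\epsilon)\|\phi\|^2.$$
Choosing $\epsilon=1/10$ yields $B(\phi)\ge c\|\phi\|_{H^1}^2$.

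The main obstacle is Step 2: rigorously excluding a second eigenvalue in $(0,4)$ too close to $0$, or at least certifying a positive gap. The explicit Pöschl--Teller eigenfunction $\sech y\tanh y$ settles this cleanly. Domain issues, such as the min-max characterization on $H^1$ and the orthogonality being taken against $q\in L^2$, are routine.
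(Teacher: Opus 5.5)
Your proof is correct. Its skeleton is the same as the paper's: Weyl's theorem, then $0$ as a simple isolated eigenvalue with eigenfunction $q$, then a gap on $q^\perp$, then the upgrade from $L^2$ to $H^1$ via a lower bound on $W_*$. You differ in how you certify that $0$ is the bottom of the spectrum, and in how sharp the gap is.

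The paper uses the factorisation $\mathscr L_*=(-\partial_y+2\tanh y)(\partial_y+2\tanh y)$. The form then equals $\int_{\mathbb R}|\phi'+2\tanh(y)\phi|^2\,dy\ge0$, and the kernel comes from solving a first-order ODE. This operator is the linearisation at $Q$ of the first-order instanton equation $Q'=2Q(1-Q)$, which is why it is natural here. The paper then extracts only an inexplicit gap $c_1>0$ from the discreteness of the spectrum below $4$; this is exactly the abstract alternative you mention at the end of Step 2.

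You instead invoke ground-state positivity and simplicity from one-dimensional Sturm--Liouville theory. You then use the explicit P\"oschl--Teller eigenfunction $\sech y\tanh y$ together with the oscillation theorem to place the second eigenvalue at $3$. This gives the sharp bound $\int(|\phi'|^2+W_*\phi^2)\,dy\ge 3\|\phi\|_{L^2}^2$ on $q^\perp$.

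The paper's route is more elementary and self-contained: it needs only integration by parts, with no oscillation or ground-state theory. Yours costs a couple of classical 1D spectral facts but yields the optimal gap and hence an explicit constant ($c=1/10$ with your choice $\epsilon=1/10$).
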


\begin{proof}
Since $W_*(y)=4-6\sech^2 y$, the operator admits the factorisation
$$
\mathscr L_*
=
\left(-\partial_y+2\tanh y\right)
\left(\partial_y+2\tanh y\right).
$$
Consequently, for every $\phi\in H^1(\mathbb R)$,
$$
\int_{\mathbb R}
\left(
|\phi'|^2+W_*\phi^2
\right)dy
=
\int_{\mathbb R}
\left|\phi'+2\tanh(y)\phi\right|^2dy
\geq 0.
$$
Equality holds if and only if $\phi'+2\tanh(y)\phi=0$ in the distributional sense, and hence
$$
\phi=C\sech^2 y=2Cq.
$$
Thus $\ker\mathscr L_*=\operatorname{span}\{q\}$. 

Since multiplication by $W_*-4=-6\sech^2y$ is relatively compact, Weyl's theorem gives
$$
\sigma_{\mathrm{ess}}(\mathscr L_*)=\sigma_{\mathrm{ess}}(-\partial_y^2+4)=[4,\infty).
$$
The spectrum below $4$ is therefore discrete. Since $0$ is a simple
eigenvalue, it is isolated, and the spectral theorem yields $c_1>0$
such that
$$
\int_{\mathbb R}
\left(
|\phi'|^2+W_*\phi^2
\right)dy
\geq
c_1\|\phi\|_{L^2(\mathbb R)}^2
$$
whenever $\phi\perp q$.

Finally, $W_*\geq-2$, so
$$
\|\phi'\|_{L^2}^2
\leq
\int_{\mathbb R}
\left(
|\phi'|^2+W_*\phi^2
\right)dy
+
2\|\phi\|_{L^2}^2.
$$
Combining the preceding two estimates gives
$$
\|\phi\|_{H^1(\mathbb R)}^2
\leq
\left(1+\frac{3}{c_1}\right)
\int_{\mathbb R}
\left(
|\phi'|^2+W_*\phi^2
\right)dy,
$$
which proves the result.
\end{proof}

Define the normalized symmetric two-front tangent
\begin{equation}
\widehat{\tau}_\lambda(z)
:=
q(z+s)+q(s-z).
\end{equation}
The explicit formula for $\tau_\lambda$ gives
\begin{equation}\label{twofrontrelation}
\tau_\lambda(\theta(z))
=
-\lambda^{-1}\widehat{\tau}_\lambda(z).
\end{equation}
Thus
\begin{equation}\label{etatwofrontrelation}
\eta_\lambda(\theta(z))
=
-\lambda^{-3}\zeta_\lambda(z)\widehat{\tau}_\lambda(z).
\end{equation}

\begin{lemma}
\label{twofrontlem}
For all sufficiently small $\lambda$,
$$
\|\widehat{\tau}_\lambda\|_{H^1(\mathbb R)}
\leq
C.
$$
Moreover,
\begin{equation}\label{weightedest}
c\lambda^2
\leq
\int_{\mathbb R}
\zeta_\lambda \widehat{\tau}_\lambda^2\sech^2z\,dz
\leq
C\lambda^2
\end{equation}
and
\begin{equation}\label{L1est}
\int_{\mathbb R}
\zeta_\lambda|\widehat{\tau}_\lambda|\sech^2z\,dz
\leq
C\lambda^2.
\end{equation}
Finally,
\begin{equation}\label{unweightedest}
\|\mathscr L_\lambda \widehat{\tau}_\lambda\|_{L^2(\mathbb R)}
\longrightarrow0.
\end{equation}
\end{lemma}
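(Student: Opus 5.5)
All four claims reduce to explicit one-dimensional computations with the profile $q(y)=\tfrac12\sech^2 y$, shifted to $y=z\pm s$ where $s=\log(1/\lambda)$. Throughout I will use $\sech z\asymp e^{-|z|}$ and $e^{-s}=\lambda$.

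For the $H^1$ bound, I note that $\widehat\tau_\lambda$ is a sum of two translates of $q$. Translation preserves the $H^1(\mathbb R)$ norm, so the triangle inequality gives $\|\widehat\tau_\lambda\|_{H^1}\le 2\|q\|_{H^1}$, which is finite because $q$ and $q'$ decay like $e^{-2|y|}$.

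For \eqref{weightedest} and \eqref{L1est}, the support of $\zeta_\lambda$ consists of the two windows $|z\pm s|\le 2R_\zeta$.
\begin{itemize}
\item On the window near $z=-s$, write $z=-s+y$ with $|y|\le 2R_\zeta$. Then $\sech^2 z=\sech^2(s-y)\asymp e^{-2s}=\lambda^2$, with constants depending only on $R_\zeta$.
\item In the same window, $q(z+s)=q(y)$ is bounded above and below by positive constants. The other term $q(s-z)=q(2s-y)$ is $O(\lambda^4)$ and positive, so $\widehat\tau_\lambda\asymp 1$ there.
\item Using $\zeta_\lambda\le 2$ everywhere and $\zeta_\lambda\ge 1$ on $|y|\le R_\zeta$, both integrals are $\asymp\lambda^2$ on this window.
\item The window near $z=s$ is identical by the symmetry $z\mapsto -z$. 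Adding the two windows gives both the upper and the lower bounds.
\end{itemize}

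For \eqref{unweightedest}, I expand using linearity and the identity $\mathscr L_* q=0$:
\[
\mathscr L_\lambda\widehat\tau_\lambda=(W_\lambda(z)-W_*(z+s))\,q(z+s)+(W_\lambda(z)-W_*(s-z))\,q(s-z).
\]
For the first term, write $u_\lambda=Q(z+s)+(Q(s-z)-1)$, and let $e(z):=1-Q(s-z)=Q(z-s)$. Since $W(u)=4(6u^2-6u+1)$ is a polynomial, on bounded ranges of $u$ we have $|W_\lambda(z)-W_*(z+s)|\le C\,e(z)$, and $e(z)\le C\min\{1,e^{2(z-s)}\}$. Hence the first term is bounded pointwise by
\[
C\,e^{-2|z+s|}\min\{1,e^{2(z-s)}\}.
\]
Splitting at $z=0$:
\begin{itemize}
\item for $z<0$ the bound is at most $Ce^{-2|z+s|}e^{-2s}$, whose $L^2$ norm is $O(\lambda^2)$;
\item for $z>0$ the factor $e^{-2|z+s|}\le e^{-2s}$, so the $L^2$ norm is $O(\lambda)$.
\end{itemize}
The second term is handled by the same argument after reflecting $z\mapsto -z$. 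Altogether $\|\mathscr L_\lambda\widehat\tau_\lambda\|_{L^2}=O(\lambda)\to 0$.

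The only step that needs real care is this interaction estimate for \eqref{unweightedest}. One has to check that the error in the potential, $W_\lambda-W_*$, is small precisely where the corresponding front $q(z+s)$ or $q(s-z)$ is non-negligible. Once the bound $e\le C\min\{1,e^{2(z-s)}\}$ is in hand, this is just bookkeeping of exponential tails.
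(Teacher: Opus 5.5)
Your proposal is correct and follows the same structure as the paper's proof. You use translates of $q$ for the $H^1$ bound, localisation to the two windows of $\zeta_\lambda$ for \eqref{weightedest}--\eqref{L1est}, and the identity $\mathscr L_\lambda q(z\pm s)=\bigl(W_\lambda(z)-W_*(z\pm s)\bigr)q(z\pm s)$ for \eqref{unweightedest}. The paper passes to limits by dominated convergence, obtaining $\lambda^{-2}\int_{\mathbb R}\zeta_\lambda\widehat\tau_\lambda^2\sech^2z\,dz\to8\int_{\mathbb R}\zeta q^2e^{2y}\,dy$ and only $o(1)$ in \eqref{unweightedest}; your explicit two-sided and exponential-tail bounds replace this and also give a rate.

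The one step to tighten is the case $z>0$ in the last estimate. Bounding $e^{-2|z+s|}$ by the constant $e^{-2s}$ alone does not give an $L^2$ bound, because the leftover factor $\min\{1,e^{2(z-s)}\}$ equals $1$ for $z>s$. You must keep the decay, in one of two ways. You can interpolate, $\|h\|_{L^2}^2\le\|h\|_{L^\infty}\|h\|_{L^1}$ for the bounding function $h$, which yields your $O(\lambda)$. Alternatively, write $e^{-2|z+s|}=e^{-2s}e^{-2z}$, which yields $O(\lambda^2)$ on that half-line.
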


\begin{proof}
Since translation and reflection preserve the $H^1(\mathbb R)$-norm $\|\widehat{\tau}_\lambda\|_{H^1}
\le
2\|q\|_{H^1}\leq C$. Near the left front, write $y=z+s$. By symmetry, and since the two supports comprising $\zeta_\lambda$ are disjoint for small $\lambda$,
$$
\lambda^{-2}\int_{\mathbb R}\zeta_\lambda \widehat{\tau}_\lambda^2\sech^2z\,dz
=
2\int_{\mathbb R}\zeta(y)
\bigl(q(y)+q(2s-y)\bigr)^2
\lambda^{-2}\sech^2(y-s)\,dy.
$$
Here
$$
\lambda^{-2}\sech^2(y-s)
=\frac{4e^{2y}}{(1+\lambda^2e^{2y})^2}
\to4e^{2y}
$$
locally uniformly, while $q(2s-y)\to0$ uniformly on $\operatorname{supp}\zeta$. Since the integral is over the fixed compact set $\operatorname{supp}\zeta$, dominated convergence gives
$$
\lambda^{-2}\int_{\mathbb R}\zeta_\lambda \widehat{\tau}_\lambda^2\sech^2z\,dz
\to
8\int_{\mathbb R}\zeta(y)q(y)^2e^{2y}\,dy>0.
$$
This proves \eqref{weightedest}. The proof of
\eqref{L1est} is entirely analogous, replacing
$q^2$ by $|q|$.

Finally,
$$
\begin{aligned}
\mathscr L_\lambda q(z+s)
&=
\bigl(W_\lambda(z)-W_*(z+s)\bigr)
q(z+s),\\
\mathscr L_\lambda q(s-z)
&=
\bigl(W_\lambda(z)-W_*(s-z)\bigr)
q(s-z).
\end{aligned}
$$
After translating to either front, the coefficient in parentheses
converges locally uniformly to zero and remains uniformly bounded.
Dominated convergence therefore gives
\eqref{unweightedest}.
\end{proof}
\noindent
The following is the principal compactness estimate.

\begin{lemma}
\label{twofrontunweightedlem}
There exists $c>0$ such that, for every sufficiently small
$\lambda>0$ and every even
$\varphi\in H^1(\mathbb R)$ satisfying
$$
\int_{\mathbb R}\varphi(z)\widehat{\tau}_\lambda(z)\,dz=0,
$$
one has
\begin{equation}
\mathcal E_\lambda(\varphi)
\geq
c\|\varphi\|_{H^1(\mathbb R)}^2.
\end{equation}
\end{lemma}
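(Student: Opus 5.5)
The plan is to reduce the two-front estimate to the single-front coercivity of Lemma~\ref{singlefrontlem} by a partition of unity adapted to the two layers at $z=\pm s$. First, a contradiction/compactness argument is awkward with the $\lambda$-dependent geometry, so I will argue directly with an IMS localisation.

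Fix a large constant $R$ (independent of $\lambda$). Choose smooth $\chi_-,\chi_0,\chi_+$ with $\chi_-^2+\chi_0^2+\chi_+^2=1$, where $\chi_\pm$ is supported in $\{|z\mp s|\le 2R\}$ and equal to $1$ on $\{|z\mp s|\le R\}$, and $|\chi_j'|\le C/R$. The IMS formula gives
$$
\mathcal E_\lambda(\varphi)=\sum_j\mathcal E_\lambda(\chi_j\varphi)-\sum_j\int|\chi_j'|^2\varphi^2\,dz ,
$$
so the error is at most $CR^{-2}\|\varphi\|_{L^2}^2$. On the support of $\chi_0$ the profile $u_\lambda$ is within $Ce^{-2R}$ of $0$ or $1$, so $W_\lambda\ge 4-Ce^{-2R}\ge 3$ there, and hence $\mathcal E_\lambda(\chi_0\varphi)\ge \|(\chi_0\varphi)'\|^2+3\|\chi_0\varphi\|^2$. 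Near the front at $-s$, after translating $y=z+s$, we have $|W_\lambda-W_*|\le Ce^{-2s}=C\lambda^2$ uniformly on the support of $\chi_-$ (because the other front's tail is exponentially small there). Therefore $\mathcal E_\lambda(\chi_-\varphi)\ge \int(|\phi'|^2+W_*\phi^2)\,dy-C\lambda^2\|\phi\|^2$ with $\phi(y)=\chi_-\varphi(y-s)$.

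To apply Lemma~\ref{singlefrontlem}, decompose $\phi=\alpha q+\phi^\perp$ with $\alpha=\int\phi q/\int q^2$. The key point is that the orthogonality hypothesis together with evenness forces $\alpha$ to be small. By evenness of $\varphi$ and of $\widehat\tau_\lambda$, the condition $\int\varphi\widehat\tau_\lambda=0$ is equivalent to $\int\varphi(z)q(z+s)\,dz=0$. Hence
$$
\int\phi q\,dy=\int(\chi_- -1)\varphi(z)q(z+s)\,dz,
$$
and this is bounded by $Ce^{-2R}\|\varphi\|_{L^2}$, since $q$ decays like $e^{-2|y|}$ outside $|y|\le R$. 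Using that the quadratic form is nonnegative and that $\mathscr L_*q=0$, we get $\mathcal E_*(\phi)=\mathcal E_*(\phi^\perp)\ge c\|\phi^\perp\|_{H^1}^2\ge c\|\phi\|_{H^1}^2-C\alpha^2$. The contribution near $+s$ is identical by symmetry. This is the step I expect to need the most care: the evenness hypothesis is essential here, because without it the odd combination $q(z+s)-q(s-z)$ would be an approximate kernel element that is orthogonal to $\widehat\tau_\lambda$.

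Summing the three pieces gives
$$
\mathcal E_\lambda(\varphi)\ge c\sum_j\|\chi_j\varphi\|_{H^1}^2-\bigl(CR^{-2}+Ce^{-2R}+C\lambda^2\bigr)\|\varphi\|_{L^2}^2 .
$$
Since $\sum_j\|\chi_j\varphi\|_{H^1}^2\ge \|\varphi\|_{H^1}^2-CR^{-2}\|\varphi\|_{L^2}^2$, the constant $c$ is independent of $R$ and $\lambda$. So I will choose $R$ large first and then $\lambda$ small, which yields $\mathcal E_\lambda(\varphi)\ge c'\|\varphi\|_{H^1}^2$. One technical check is that the bound $|W_\lambda-W_*(\cdot+s)|\le C\lambda^2 e^{2|y|}$, restricted to $|y|\le 2R$, gives $C_R\lambda^2$; this is fine because $\lambda$ is chosen after $R$.
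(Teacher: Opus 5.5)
Your proposal is correct and matches the paper's proof step for step: the same quadratic IMS partition around $\pm s$, the bound $W_\lambda\geq 3$ away from the fronts, single-front coercivity after splitting off the $q$-component, and evenness plus orthogonality to bound that component by the tail $\bigl(\int_{|y|\geq R}q^2\bigr)^{1/2}$. The only difference is packaging: the paper runs these estimates along a contradiction sequence $\lambda_n\downarrow0$ without using any compactness, whereas you fix $R$ first and then take $\lambda$ small, which gives the same conclusion directly.
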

\begin{proof}
Suppose the result were false. Then there exist
$\lambda_n\downarrow0$ and even functions $\varphi_n\in H^1(\mathbb R)$
such that
$$
\|\varphi_n\|_{H^1}=1,
\qquad
\int_{\mathbb R}\varphi_n\widehat{\tau}_{\lambda_n}\,dz=0,
\qquad
\limsup_{n\to\infty}\mathcal E_{\lambda_n}(\varphi_n)\leq0.
$$
Write $s_n:=-\log\lambda_n$. Since $W_{\lambda_n}\simeq 4$ away from the transition layers, for every sufficiently large $R$, one has
\begin{equation}\label{positiveawayfronts}
W_{\lambda_n}(z)\geq3
\end{equation}
whenever $\operatorname{dist}(z,\{-s_n,s_n\})\geq R$ and $n$ is sufficiently large.
Choose a smooth quadratic partition of unity $\chi_{n,-}^2+\chi_{n,0}^2+\chi_{n,+}^2=1$ such that
$$
\chi_{n,-}=1
\quad\text{on }|z+s_n|\leq R,
\qquad
\operatorname{supp}\chi_{n,-}
\subset
\{|z+s_n|\leq2R\},
$$
$$
\chi_{n,+}(z)=\chi_{n,-}(-z),
$$
and
$$
\sum_{\sigma\in\{-,0,+\}}
|\chi_{n,\sigma}'|^2
\leq
\frac{C}{R^2}.
$$
Then
\begin{equation}\label{twofrontIMS}
\mathcal E_{\lambda_n}(\varphi_n)
=
\sum_{\sigma\in\{-,0,+\}}
\mathcal E_{\lambda_n}(\chi_{n,\sigma}\varphi_n)
-
\int_{\mathbb R}
\left(
\sum_\sigma|\chi_{n,\sigma}'|^2
\right)\varphi_n^2\,dz.
\end{equation}
By \eqref{positiveawayfronts},
$$
\mathcal E_{\lambda_n}(\chi_{n,0}\varphi_n)
\geq
c\|\chi_{n,0}\varphi_n\|_{H^1}^2.
$$
On the left front, set
$$
\phi_{n,-}(y)
:=
(\chi_{n,-}\varphi_n)(y-s_n).
$$
For fixed $R$, define
$$
\varepsilon_n(R)
:=
\sup_{|y|\leq2R}
\left|
W_{\lambda_n}(y-s_n)-W_*(y)
\right|.
$$
Then $\varepsilon_n(R)\longrightarrow0$ as $n\to\infty$ and
$$
\left|
\mathcal E_{\lambda_n}(\chi_{n,-}\varphi_n)
-
\int_{\mathbb R}
\left(
|\phi_{n,-}'|^2+W_*\phi_{n,-}^2
\right)dy
\right|
\leq
\varepsilon_n(R)\|\phi_{n,-}\|_{L^2}^2.
$$
Write
$$
\phi_{n,-}=a_{n,-}q+g_{n,-},
\qquad
\int_{\mathbb R}g_{n,-}q\,dy=0.
$$
Since $\mathscr L_*q=0$,
$$
\int_{\mathbb R}
\left(
|\phi_{n,-}'|^2+W_*\phi_{n,-}^2
\right)dy
=
\int_{\mathbb R}
\left(
|g_{n,-}'|^2+W_*g_{n,-}^2
\right)dy.
$$
Lemma~\ref{singlefrontlem} therefore gives
$$
\mathcal E_{\lambda_n}(\chi_{n,-}\varphi_n)
\geq
c\|g_{n,-}\|_{H^1}^2
-
\varepsilon_n(R)\|\phi_{n,-}\|_{L^2}^2.
$$
Since $(\varphi_n)$ is bounded in $H^1$, the final term is $o(1)$ as $n\to\infty$
for fixed $R$. The same estimate holds on the right front. By symmetry $a_{n,+}=a_{n,-}$. The orthogonality to $\widehat{\tau}_{\lambda_n}$ controls this coefficient.
Indeed, evenness gives
$$
0
=
\int_{\mathbb R}\varphi_n\widehat{\tau}_{\lambda_n}\,dz
=
2\int_{\mathbb R}\varphi_n(z)q(z+s_n)\,dz.
$$
Since $\chi_{n,-}=1$ on $|z+s_n|\leq R$,
$$
\left|
\int_{\mathbb R}
(1-\chi_{n,-})\varphi_n(z)q(z+s_n)\,dz
\right|
\leq
\left(
\int_{|y|\geq R}q(y)^2\,dy
\right)^{1/2}.
$$
Now
$$
\begin{aligned}
0
&=
\int_{\mathbb R}\phi_{n,-}(y)q(y)\,dy
+
\int_{\mathbb R}
(1-\chi_{n,-}(z))
\varphi_n(z)q(z+s_n)\,dz\\
&=
a_{n,-}\|q\|_{L^2(\mathbb R)}^2
+
\int_{\mathbb R}
(1-\chi_{n,-}(z))
\varphi_n(z)q(z+s_n)\,dz,
\end{aligned}
$$
since $g_{n,-}\perp q$. Therefore
$$
|a_{n,-}|
\leq
\frac{1}{\|q\|_{L^2(\mathbb R)}^2}
\left|
\int_{\mathbb R}
(1-\chi_{n,-}(z))
\varphi_n(z)q(z+s_n)\,dz
\right|
\leq
C
\left(
\int_{|y|\geq R}q(y)^2\,dy
\right)^{1/2}.
$$
Combining these estimates with \eqref{twofrontIMS} and using that $\limsup_{n\to\infty}\mathcal E_{\lambda_n}(\varphi_n)\leq0$ gives
$$
\begin{aligned}
\limsup_{n\to\infty}
\Bigl(
&\|g_{n,-}\|_{H^1}^2
+
\|g_{n,+}\|_{H^1}^2
+
\|\chi_{n,0}\varphi_n\|_{H^1}^2
\Bigr)
\leq
\frac{C}{R^2}.
\end{aligned}
$$
Moreover,
$$
\|\varphi_n\|_{H^1}^2
\leq
C\sum_{\sigma\in\{-,0,+\}}
\|\chi_{n,\sigma}\varphi_n\|_{H^1}^2
+
\frac{C}{R^2}\|\varphi_n\|_{L^2}^2,
$$
and
$$
\chi_{n,\pm}\varphi_n
=
a_{n,\pm}q+g_{n,\pm}\Rightarrow\|\chi_{n,\pm}\varphi_n\|_{H^1}
\leq
|a_{n,\pm}|\|q\|_{H^1}
+
\|g_{n,\pm}\|_{H^1}.
$$
Using the estimates for $a_{n,\pm}$, $g_{n,\pm}$, and
$\chi_{n,0}\varphi_n$, together with the boundedness of
$(\varphi_n)$ in $L^2(\mathbb R)$, we obtain
$$
\limsup_{n\to\infty}\|\varphi_n\|_{H^1}^2
\leq
C
\int_{|y|\geq R}q(y)^2\,dy
+
\frac{C}{R^2}.
$$
Letting $R\to\infty$ contradicts
$\|\varphi_n\|_{H^1}=1$.
\end{proof}
\noindent
We now extend this to orthogonality to $\eta_\lambda$.

\begin{lemma}\label{etacoercivity}
There exists $c>0$ such that, for every sufficiently small
$\lambda>0$ and every symmetric $v\in\mathcal H$ satisfying $\langle v,\eta_\lambda\rangle_w=0$, one has
\begin{equation}\label{Hcoereq}
\mathcal E_\lambda(v)
\geq
c\|v\|_{\mathcal H}^2.
\end{equation}
Moreover, if $v\in\mathcal D(\mathcal A_\lambda)\cap\mathcal H$ is symmetric,
then
\begin{equation}\label{reversecoer}
\|\mathcal A_\lambda v\|_{L_w^2}^2
\geq
c\,\mathcal E_\lambda(v).
\end{equation}
\end{lemma}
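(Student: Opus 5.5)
Two things need proving: the coercivity estimate \eqref{Hcoereq} under the condition $\langle v,\eta_\lambda\rangle_w=0$, and the reverse estimate \eqref{reversecoer}. For the first, we transfer Lemma~\ref{twofrontunweightedlem}, which gives coercivity under the unweighted condition $\int V\widehat\tau_\lambda\,dz=0$, to the weighted, localised condition. In $z$-coordinates, \eqref{etatwofrontrelation} says $\langle v,\eta_\lambda\rangle_w=0$ is equivalent to
$$
\int_{\mathbb R}V\,\zeta_\lambda\widehat\tau_\lambda\,\sech^2z\,dz=0 .
$$
Given a symmetric $v$ with $V$ even, decompose $V=\kappa\widehat\tau_\lambda+\varphi$ with $\int\varphi\widehat\tau_\lambda\,dz=0$. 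Here $\varphi$ is even, since $\widehat\tau_\lambda$ is even, and $\kappa=\int V\widehat\tau_\lambda/\|\widehat\tau_\lambda\|_{L^2}^2$, where $\|\widehat\tau_\lambda\|_{L^2}\asymp1$. The main step is to bound $|\kappa|$ by a small multiple of $\|\varphi\|_{H^1}$.

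To bound $\kappa$, insert the decomposition into the orthogonality condition:
$$
\kappa\int\zeta_\lambda\widehat\tau_\lambda^2\sech^2z\,dz=-\int\varphi\,\zeta_\lambda\widehat\tau_\lambda\sech^2z\,dz .
$$
By \eqref{weightedest} the left side is $\asymp\kappa\lambda^2$. A plain Cauchy--Schwarz/$L^\infty$ bound on the right side using \eqref{L1est} only gives $C\lambda^2\|\varphi\|_{L^\infty}$, hence $|\kappa|\le C\|\varphi\|_{H^1}$. That constant is not small, so it is not enough. This is the main obstacle. I would resolve it by localising to one front and using the fact that, on $\operatorname{supp}\zeta$, the weight $\lambda^{-2}\sech^2(y-s)$ converges to $4e^{2y}$, exactly as in Lemma~\ref{twofrontlem}.

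Concretely, I would argue by contradiction, as in Lemma~\ref{twofrontunweightedlem}. Suppose there are $\lambda_n\to0$ and even $V_n$ with $\|V_n\|_{H^1}=1$, the weighted orthogonality, and $\mathcal E_{\lambda_n}(V_n)\to0$ (the left side of \eqref{Hcoereq} is controlled by $H^1$ via \eqref{H1identity}). Translate to the left front, $\phi_n(y)=V_n(y-s_n)$, and pass to a weak $H^1$ limit $\phi$, which is also a local strong limit. The IMS argument of Lemma~\ref{twofrontunweightedlem} shows that the mass away from the fronts and the part orthogonal to $q$ near the fronts both vanish. Hence $\phi_n$ converges to $aq$ in $H^1_{\mathrm{loc}}$, and $\|V_n\|_{H^1}=1$ forces $a\neq0$. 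Passing to the limit in the orthogonality condition, divided by $\lambda_n^2$, gives
$$
a\int\zeta q^2e^{2y}\,dy=0,
$$
where the integral is positive. This contradicts $a\neq0$ and proves \eqref{Hcoereq}. The remaining subtlety is that the contributions from the other front and from outside the support are $o(\lambda^2)$; this follows from the uniform convergence shown in the proof of Lemma~\ref{twofrontlem}.

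For \eqref{reversecoer}, write $\mathcal E_\lambda(v)=\langle\mathcal A_\lambda v,v\rangle_w$ and decompose $v=\kappa\tau_\lambda+v^\perp$ with $\langle v^\perp,\eta_\lambda\rangle_w=0$. Then
$$
\mathcal E_\lambda(v)=\langle\mathcal A_\lambda v,v^\perp\rangle_w+\kappa\langle v,\mathcal A_\lambda\tau_\lambda\rangle_w .
$$
For the first term use Cauchy--Schwarz, $\|v^\perp\|_{L^2_w}\le\|v^\perp\|_{\mathcal H}\le C\mathcal E_\lambda(v^\perp)^{1/2}$ by \eqref{Hcoereq}, and then bound $\mathcal E_\lambda(v^\perp)\lesssim\mathcal E_\lambda(v)+\kappa^2\|\mathcal A_\lambda\tau_\lambda\|\,\|\tau_\lambda\|$. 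For the second term use $\|\mathcal A_\lambda\tau_\lambda\|_{L^2_w}\le C\lambda^2$ from Proposition~\ref{norms}. The danger is that $\kappa\tau_\lambda$ is a near-kernel direction: if $\mathcal E_\lambda(v)$ is dominated by that component, the estimate needs the small eigenvalue of size $\lambda^2$ to appear on both sides consistently. Indeed $\mathcal E(\tau)\sim\lambda^2\|\tau\|^2$ while $\|\mathcal A\tau\|^2\sim\lambda^4\|\tau\|^2$, so with a constant $c$ independent of $\lambda$ the inequality would fail for the pure tangent direction unless $\mathcal E_\lambda(\tau_\lambda)$ is much smaller. I would therefore first check whether $\mathcal E_\lambda(\tau_\lambda)=\langle\mathcal A_\lambda\tau_\lambda,\tau_\lambda\rangle_w$ is in fact negative or of order $\lambda^4$. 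If it is only as small as Proposition~\ref{norms} guarantees, the cleanest route is the spectral one: \eqref{Hcoereq} shows that $\mathcal A_\lambda$ restricted to symmetric functions has at most one eigenvalue below $c$. On the spectral projection onto $[c,\infty)$ we have $\|\mathcal A v\|^2\ge c\langle\mathcal A v,v\rangle$. On the single low eigenvector, with eigenvalue $\mu_\lambda$, we have $\mu_\lambda^2\ge c\mu_\lambda$ trivially when $\mu_\lambda\le0$, since the right side is then nonpositive. So the key check is that the small eigenvalue is $\le0$ (or $O(\lambda^2)$ with the inequality holding up to that scale), which I expect follows from the sign of $L(u_\lambda)\le0$ and a Rayleigh-quotient computation with $\tau_\lambda$.
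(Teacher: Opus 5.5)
\textbf{Part 1, estimate \eqref{Hcoereq}: correct, by a different route.} The obstacle you identify there is not real. The paper uses exactly the decomposition you set aside: $V=C_1\widehat\tau_\lambda+V^\perp$ with $\int V^\perp\widehat\tau_\lambda\,dz=0$. From the orthogonality, \eqref{weightedest}, \eqref{L1est} and Sobolev it gets $|C_1|\le C\|V^\perp\|_{H^1}$ with a non-small $C$. It then expands $\mathcal E_\lambda(V)=\mathcal E_\lambda(V^\perp)+2C_1\langle\mathscr L_\lambda\widehat\tau_\lambda,V^\perp\rangle_{L^2(\mathbb R)}+C_1^2\mathcal E_\lambda(\widehat\tau_\lambda)$. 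Since $\|\mathscr L_\lambda\widehat\tau_\lambda\|_{L^2}\to0$ by Lemma~\ref{twofrontlem}, the last two terms are $o(1)\|V^\perp\|_{H^1}^2$ however large $C$ is. Together with $\|V\|_{H^1}\le C\|V^\perp\|_{H^1}$ this finishes the argument, so a bounded coefficient suffices.

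Your compactness argument instead reruns the proof of Lemma~\ref{twofrontunweightedlem} with the weighted constraint in place of the unweighted one, and it works. The IMS part of that proof never uses orthogonality. In the limit, the weighted constraint still forces the coefficient of $q$ to vanish, because $\lambda_n^{-2}\sech^2(y-s_n)\to4e^{2y}$ locally uniformly and $\int\zeta q^2e^{2y}\,dy>0$. Two small corrections. By evenness the second front contributes exactly what the first does, so there is nothing extra to estimate there. And you only get $\limsup\mathcal E_{\lambda_n}(V_n)\le0$, not $\mathcal E_{\lambda_n}(V_n)\to0$, which is enough. Your route avoids the transfer step; the paper's is shorter because it uses Lemma~\ref{twofrontunweightedlem} as a black box.

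\textbf{Part 2, estimate \eqref{reversecoer}: the genuine gap.} The orthogonality $\langle v,\eta_\lambda\rangle_w=0$ is meant to remain in force there; the paper's proof uses \eqref{Hcoereq}, and every later application has it. With it the estimate is one line:
$\mathcal E_\lambda(v)=\langle\mathcal A_\lambda v,v\rangle_w\le\|\mathcal A_\lambda v\|_{L_w^2}\|v\|_{L_w^2}\le\|\mathcal A_\lambda v\|_{L_w^2}\|v\|_{\mathcal H}\le C\|\mathcal A_\lambda v\|_{L_w^2}\,\mathcal E_\lambda(v)^{1/2}$.
This uses $\sech^2z\le1$ and \eqref{Hcoereq}.

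You instead attack the statement without orthogonality. Via min--max from \eqref{Hcoereq}, you correctly reduce it to showing that the (at most one) symmetric eigenvalue $\mu_\lambda$ below $c$ satisfies $\mu_\lambda\le0$. You then stop at ``I expect''. That sign is the entire content of your version, and the reason you offer does not settle it: $L(u_\lambda)\le0$ alone does not determine the sign of $\langle\mathcal A_\lambda\tau_\lambda,\tau_\lambda\rangle_w$.

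Your expectation is in fact true, and can be checked as follows. Write $\mathscr L_\lambda q(z+s)=24\beta(1-2A+\beta)\,q(z+s)$ with $A=Q(z+s)$ and $\beta=Q(z-s)$. Using the reflection symmetry and dominated convergence, one finds $\mathcal E_\lambda(\widehat\tau_\lambda)=-48\lambda^4(1+o(1))$, hence $\mu_\lambda<0$. As written, though, this step is missing from your proposal, and for the intended statement it is unnecessary.
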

\begin{proof}
By \eqref{etatwofrontrelation}, the orthogonality condition is equivalent
to
\begin{equation}\label{zorth}
\int_{\mathbb R}
V(z)\zeta_\lambda(z)\widehat{\tau}_\lambda(z)\sech^2z\,dz
=
0.
\end{equation}
Decompose
$$
V=C_1\,\widehat{\tau}_\lambda+V^\perp,
\qquad
\int_{\mathbb R}V^\perp(z)\widehat{\tau}_\lambda(z)\,dz=0.
$$
Lemma~\ref{twofrontunweightedlem} gives
\begin{equation}\label{Vperpcoer}
\mathcal E_\lambda(V^\perp)
\geq
c\|V^\perp\|_{H^1}^2.
\end{equation}
From \eqref{zorth},
$$
C_1
\int_{\mathbb R}
\zeta_\lambda \widehat{\tau}_\lambda^2\sech^2z\,dz
=
-
\int_{\mathbb R}
V^\perp\zeta_\lambda \widehat{\tau}_\lambda\sech^2z\,dz.
$$
Using Sobolev embedding and
Lemma~\ref{twofrontlem},
$$
\left|
\int_{\mathbb R}
V^\perp\zeta_\lambda \widehat{\tau}_\lambda\sech^2z\,dz
\right|
\leq
\|V^\perp\|_{L^\infty}
\int_{\mathbb R}
\zeta_\lambda|\widehat{\tau}_\lambda|\sech^2z\,dz
\leq
C\lambda^2\|V^\perp\|_{H^1}.
$$
Since
$$
\int_{\mathbb R}
\zeta_\lambda \widehat{\tau}_\lambda^2\sech^2z\,dz
\geq
c\lambda^2,
$$
we obtain
\begin{equation}\label{ccontrolled}
|C_1|
\leq
C\|V^\perp\|_{H^1}.
\end{equation}
Now
$$
\mathcal E_\lambda(V)
=
\mathcal E_\lambda(V^\perp)
+
2C_1
\langle\mathscr L_\lambda \widehat{\tau}_\lambda,V^\perp\rangle_{L^2}
+
C_1^2\mathcal E_\lambda(\widehat{\tau}_\lambda).
$$
By Lemma~\ref{twofrontlem}, $\|\mathscr L_\lambda \widehat{\tau}_\lambda\|_{L^2}=o(1)$ and
$$
|\mathcal E_\lambda(\widehat{\tau}_\lambda)|
\leq
\|\mathscr L_\lambda \widehat{\tau}_\lambda\|_{L^2}
\|\widehat{\tau}_\lambda\|_{L^2}
=
o(1).
$$
Together with \eqref{Vperpcoer} and
\eqref{ccontrolled}, this gives
$$
\mathcal E_\lambda(V)
\geq
\bigl(C_1-o(1)\bigr)\|V^\perp\|_{H^1}^2.
$$
For sufficiently small $\lambda$, we thus have $\mathcal E_\lambda(V)
\geq
\frac{C_1}{2}\|V^\perp\|_{H^1}^2$. Also,
$$
\|V\|_{H^1}
\leq
|C_1|\|\widehat{\tau}_\lambda\|_{H^1}
+
\|V^\perp\|_{H^1}
\leq
C\|V^\perp\|_{H^1}.
$$
Hence $\mathcal E_\lambda(V)
\geq
c\|V\|_{H^1}^2$. Using \eqref{H1identity}, this is
\eqref{Hcoereq}.
Suppose now that $v\in\mathcal D(\mathcal A_\lambda)$. By
Cauchy--Schwarz,
$$
\mathcal E_\lambda(V)
=
\langle\mathcal A_\lambda v,v\rangle_w
\leq
\|\mathcal A_\lambda v\|_{L_w^2}
\|v\|_{L_w^2}.
$$
Since $\sech^2z\leq1$,
$$
\|v\|_{L_w^2}^2
=
\int_{\mathbb R}|V|^2\sech^2z\,dz
\leq
\|V\|_{H^1}^2.
$$
The coercivity just proved therefore gives
$$
\mathcal E_\lambda(V)
\leq
C
\|\mathcal A_\lambda v\|_{L_w^2}
\mathcal E_\lambda(V)^{1/2}.
$$
Thus $\|\mathcal A_\lambda v\|_{L_w^2}^2
\geq
c\,\mathcal E_\lambda(V)$, which is \eqref{reversecoer}.
\end{proof}

\begin{remark}
Since $V(z)=v(\theta(z))$ and $\theta\mapsto z$ is a bijection from
$(0,\pi)$ onto $\mathbb R$,
$$
\|v\|_{L^\infty(0,\pi)}
=
\|V\|_{L^\infty(\mathbb R)}.
$$
Hence the Sobolev embedding
$$
H^1(\mathbb R)\hookrightarrow L^\infty(\mathbb R),
$$
together with Lemma~\ref{etacoercivity}, yields
\begin{equation}
\|v\|_{L^\infty(0,\pi)}^2
\leq
C\|v\|_{\mathcal H}^2
\leq
C\mathcal E_\lambda(v)
\end{equation}
whenever $v$ is symmetric and
$\langle v,\eta_\lambda\rangle_w=0$. 
\end{remark}

\subsection{Decay of the Error and Passing through $1$}
The delicate lemma below is the principal work needed to control the non-linear error arising when determining the evolution of $v(t)$ and $\lambda(t)$.

\begin{lemma}\label{weightedAlambda}
For every sufficiently small $\lambda>0$ and every symmetric $v\in\mathcal D(\mathcal A_\lambda)\cap\mathcal H$ satisfying $\langle v,\eta_\lambda\rangle_w=0$, one has
$$
\int_0^\pi\frac{v(\theta)^2}{\sin^3\theta}\,d\theta
\leq
C\|\mathcal A_\lambda v\|_{L_w^2}^2.
$$
\end{lemma}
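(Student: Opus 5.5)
In the coordinate $z=\log\tan(\theta/2)$ both sides of Lemma~\ref{weightedAlambda} become weighted $L^2$ norms on $\mathbb R$. Since $\sin\theta=\sech z$ and $d\theta=\sech z\,dz$,
$$
\int_0^\pi\frac{v^2}{\sin^3\theta}\,d\theta=\int_{\mathbb R}\cosh^2 z\,V^2\,dz,
\qquad
\|\mathcal A_\lambda v\|_{L_w^2}^2=\int_{\mathbb R}\cosh^2 z\,F^2\,dz,
\qquad F:=\mathscr L_\lambda V .
$$
So the claim is a weighted resolvent bound $\|\cosh z\,V\|_{L^2}\le C\|\cosh z\,\mathscr L_\lambda V\|_{L^2}$, uniform in small $\lambda$, under the symmetry and orthogonality conditions.

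To handle the exponential weight I would conjugate. Set $U:=\cosh z\,V$. A direct computation gives
$$
\cosh z\,\mathscr L_\lambda(\sech z\,U)=-U''+2\tanh z\,U'+\bigl(W_\lambda-1+2\sech^2z\bigr)U=:G,
\qquad G=\cosh z\,F .
$$
Pairing with $U$ and using $\int 2\tanh z\,U'U\,dz=-\int\sech^2z\,U^2\,dz$ yields the energy identity
$$
\langle G,U\rangle=\int\bigl(|U'|^2+(W_\lambda-1+\sech^2z)U^2\bigr)dz .
$$
Away from the two layers $W_\lambda\ge3$, so the potential there is at least $2$. The weight $e^{2|z|}$ grows more slowly than the natural decay rate $e^{-2|z|}$ of the linearisation, and this is why the conjugation costs nothing. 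I would localise with the same quadratic partition of unity $\chi_{-}^2+\chi_0^2+\chi_+^2=1$ as in Lemma~\ref{twofrontunweightedlem}. The cutoff commutators are now first-order, but they have coefficients $O(1/R)$ and can be absorbed. The central piece then gives
$$
\|\chi_0U\|_{H^1}^2\le C\|G\|_{L^2}^2+\frac{C}{R^2}\|U\|_{L^2}^2 .
$$

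The main obstacle is the two front regions $|z\mp s|\le 2R$. There $\cosh^2z\asymp\lambda^{-2}$, the potential dips to $-2$, and the bound from Lemma~\ref{etacoercivity},
$$
\|V\|_{H^1}^2\le C\,\mathcal E_\lambda(v)\le C\|\mathcal A_\lambda v\|_{L_w^2}^2,
$$
carries no factor $\lambda^{-2}$. So I need a local estimate with the correct scaling. Translate to the left front, $y=z+s$, and set $\phi=\chi_-V$. Then $\phi$ satisfies $\mathscr L_*\phi=\chi_-F+(\text{commutator})+o(1)\phi$, by the convergence $W_\lambda(y-s)\to W_*(y)$ used earlier. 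The orthogonality $\langle v,\eta_\lambda\rangle_w=0$, read through \eqref{etatwofrontrelation} and the rescaling in the proof of Lemma~\ref{twofrontlem}, becomes (by symmetry, at each front separately)
$$
\int\phi\,\zeta(y)\,q(y)\,\frac{e^{2y}}{(1+\lambda^2e^{2y})^2}\,dy=0 .
$$
This is orthogonality against a fixed positive function $\rho\approx\zeta e^{2y}q$, which is not orthogonal to $q$. Hence $\mathscr L_*$ is invertible on $\rho^\perp$ with a uniform bound: decompose $\phi=aq+g$ with $g\perp q$, use Lemma~\ref{singlefrontlem} for $g$, and use the $\rho$-constraint to control $a$ by $g$, as in Lemma~\ref{etacoercivity}. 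This gives $\|\chi_-V\|_{L^2}\le C\|\chi_-F\|_{L^2}+C R^{-1}\|V\|_{H^1(\mathrm{collar})}$. Multiplying by $\lambda^{-2}\asymp\cosh^2$ on the support converts this into the weighted bound.

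The collar terms would be estimated by the central-region bound. That bound is itself controlled in weighted form, since on the collar $\cosh z\asymp\lambda^{-1}$ as well. Summing the three pieces and taking $R$ large, then $\lambda$ small, absorbs all error terms and yields $\|U\|_{L^2}^2\le C\|G\|_{L^2}^2$, which is the lemma. The delicate bookkeeping is in the front step: the constraint must be transferred from $\zeta_\lambda\widehat\tau_\lambda\sech^2$ to the fixed limit profile $\rho$ with errors of size $o(1)$ relative to $\lambda^{-2}$, and the tails $q(2s-y)$ from the opposite front must be negligible on $\operatorname{supp}\zeta$.
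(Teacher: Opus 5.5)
Your exterior step is fine. Expanding $U=\cosh z\,V$, your identity becomes $\int FV\cosh^2 z=\int\bigl(\cosh^2 z\,|V'|^2+(W_\lambda-2)\cosh^2 z\,V^2+V^2\bigr)$. This is exactly the paper's weighted integration by parts with $\varpi_s=\cosh^2(y-s)$ and $\varpi_s''=4\varpi_s-2$.

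The front step, however, does not close. You use $\cosh^2z\asymp\lambda^{-2}$ on $|z+s|\le 2R$, but in the front coordinate
\[
\cosh^2 z=\cosh^2(y-s)=\tfrac14\lambda^{-2}e^{-2y}\bigl(1+\lambda^2e^{2y}\bigr)^2,
\]
so on $|y|\le 2R$ the weight runs from about $\tfrac14e^{-4R}\lambda^{-2}$ to $\tfrac14e^{4R}\lambda^{-2}$. Your front bound $\|\chi_-V\|_{L^2}\le C\|\chi_-F\|_{L^2}+CR^{-1}\|V\|_{H^1(\mathrm{collar})}$ is unweighted, since it rests on the unweighted Lemma~\ref{singlefrontlem}. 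Turning it into a bound for $\int\chi_-^2V^2\cosh^2z$ costs a factor $e^{4R}$ near $y\approx-2R$. Re-expressing the collar term in the weighted norm costs another $e^{4R}$ near $y\approx 2R$.

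So the error you want to absorb carries a coefficient of order $e^{8R}R^{-2}$, which grows as $R\to\infty$. More generally, a cutoff of width $w$ gains $w^{-1}$ from its derivative but loses $e^{O(w)}$ from the weight. If you keep $R$ fixed instead, the front and central estimates each produce collar terms with $O(1)$ coefficients, and the argument is circular. Note also that $\chi_0$ vanishes at the inner edge of the collar, so the central bound alone cannot control the front's collar term.

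What is missing is a genuinely weighted single-front estimate,
\[
\int\phi^2e^{-2y}\,dy\le C\int|\mathscr L_*\phi|^2e^{-2y}\,dy \quad\text{for } \phi \text{ orthogonal to } \rho,
\]
uniform in $s$. Its key input is not the unweighted spectral gap. It is the fact that the solutions of $\mathscr L_*G=0$ lying in $L^2(e^{-2y}dy)$ are multiples of $q$, because the second solution $q\int_0^yq^{-2}$ grows like $e^{2|y|}$ at both ends. Combined with $\int q\rho\,dy\neq0$, this is what the orthogonality exploits.

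The paper obtains this by contradiction with $R$ fixed:
\begin{enumerate}
\item normalise $\int G_n^2\varpi_{s_n}=1$;
\item rescale $\widetilde G_n=\lambda_n^{-1}G_n$, so that $\lambda_n^2\varpi_{s_n}\to\frac14e^{-2y}$ locally uniformly;
\item extract an $H^1_{\mathrm{loc}}$ limit $G_\infty=dq$ and use the orthogonality to get $d=0$;
\item use the exterior estimate, plus a Caccioppoli bound on the fixed annulus $\Omega_R$, to rule out weighted mass escaping from the front.
\end{enumerate}
A direct Fredholm argument for the conjugated operator $e^{-y}\mathscr L_*e^{y}$ on $L^2$ is a conceivable alternative, but some such weighted statement has to be proved.

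Separately, your energy identity presupposes $U=\cosh z\,V\in L^2$. This does not follow a priori from $V\in H^1$ and $\cosh z\,\mathscr L_\lambda V\in L^2$. The paper handles it with the truncated weights $\varpi_{s,N}$, and you need something similar before any absorption is legitimate.
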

\begin{proof}
Since $\sin\theta=\sech z$ and $d\theta=\sech z\,dz$, we have
$$
\int_0^\pi\frac{v^2}{\sin^3\theta}\,d\theta
=
\int_{\mathbb R}V^2\cosh^2z\,dz,
\qquad
\|\mathcal A_\lambda v\|_{L_w^2}^2
=
\int_{\mathbb R}|\mathscr L_\lambda V|^2\cosh^2z\,dz.
$$
The symmetry of $v$ implies that $V$ is even and $V'(0)=0$.
We may therefore work on $z\leq0$. Introduce the left-front
coordinate
$$
y=z+s,
\qquad
G(y)=V(y-s),
\qquad
y\leq s.
$$
Then $G'(s)=0$ and
$$
\mathscr L_\lambda V(z)=\mathscr L_sG(y),
$$
Writing
$$
\varpi_s(y):=\cosh^2(y-s)=\sech^2(y-s)^{-1},
$$
it is enough to prove, uniformly for large $s$,
\begin{equation}\label{halflineweightest}
\int_{-\infty}^sG^2\varpi_s\,dy
\leq
C\int_{-\infty}^s|\mathscr L_sG|^2\varpi_s\,dy.
\end{equation}
For later use, we also record the orthogonality condition in these
coordinates. On $y\leq s$, and for large $s$, the second cutoff
comprising $\zeta_\lambda$ vanishes. Since $V$ and
$\eta_\lambda$ are even, the condition
$\langle v,\eta_\lambda\rangle_w=0$ is equivalent to
\begin{equation}\label{halflineweightorth}
\int_{-\infty}^s
G(y)\zeta(y)
\bigl(q(y)+q(2s-y)\bigr)
\sech^2(y-s)\,dy
=
0.
\end{equation}
We first control the region away from the left front. Choose $R$
large enough, and then $s_0$, so that
$$
W_s(y)\geq3
\qquad
\text{whenever }
y\in(-\infty,s]\setminus[-R,R],
\quad s\geq s_0.
$$
Indeed, away from a fixed neighbourhood of $y=0$, the corresponding profile is uniformly close either to $0$ or to $1$, where the
potential equals $4$.
Let $\chi\in C^\infty(\mathbb R)$ satisfy
$$
\chi=0\quad\text{on }[-R,R],
\qquad
\chi=1\quad\text{outside }[-R-1,R+1],
$$
and let
$$
\Omega_R:=\{R-1\leq|y|\leq R+2\}.
$$
We claim that
\begin{equation}\label{exteriorest}
\int_{-\infty}^s\chi^2G^2\varpi_s\,dy
\leq
C\int_{-\infty}^s|\mathscr L_sG|^2\varpi_s\,dy
+
C\int_{\Omega_R}G^2\varpi_s\,dy,
\end{equation}
where $C$ is independent of $s\geq s_0$.
Since $\varpi_s(y)$ grows exponentially as $y\to-\infty$, neither
$G\in H^1(-\infty,s)$ nor $\mathscr L_sG\in L^2(\varpi_s\,dy)$
implies \emph{a priori} that $G\in L^2(\varpi_s\,dy)$. To deal with this, truncate the weight by
$$
\varpi_{s,N}:=\frac{N\varpi_s}{N+\varpi_s}.
$$
Then $\varpi_{s,N}\uparrow\varpi_s$, and direct differentiation
using $\varpi_s''=4\varpi_s-2$ gives $\varpi_{s,N}''\leq4\varpi_{s,N}$. Set $f=\chi G$. The following integration by parts may be
performed after multiplying $f$ by a cutoff at $y=-L$, and then
passing to the limit $L\to\infty$. The resulting cutoff errors
vanish because $f\in H^1(-\infty,s)$ and
$\varpi_{s,N}$ is bounded. We obtain
\begin{equation}\label{nonlin1}
    \int_{-\infty}^s(\mathscr L_sf)f\varpi_{s,N}\,dy=\int_{-\infty}^s|f'|^2\varpi_{s,N}\,dy+
    \int_{-\infty}^s\left(W_s\varpi_{s,N}-\frac12\varpi_{s,N}''\right)f^2\,dy.
\end{equation}
There is no boundary contribution at $y=s$, since
$\chi=1$ near $s$, $G'(s)=0$, and
$\varpi_{s,N}'(s)=0$. On the support of $f$,
$$
W_s\varpi_{s,N}
-\frac12\varpi_{s,N}''
\geq
\varpi_{s,N}.
$$
Throwing away the middle term in \eqref{nonlin1} and applying Cauchy--Schwarz on the left therefore gives $\|f\|_{L^2(\varpi_{s,N}dy)}
\leq
\|\mathscr L_sf\|_{L^2(\varpi_{s,N}dy)}$. Since $\mathscr L_s(\chi G)
=
\chi \mathscr L_sG-2\chi'G'-\chi''G$, it follows that
$$
\int_{-\infty}^s\chi^2G^2\varpi_{s,N}\,dy
\leq
C\int_{-\infty}^s|\mathscr L_sG|^2\varpi_s\,dy
+
C\int_{\operatorname{supp}\chi'}
\bigl(|G'|^2+G^2\bigr)\varpi_s\,dy.
$$
A weighted Caccioppoli estimate controls the derivative term. Choose
$\xi\in C_c^\infty(\Omega_R)$ with
$\xi\equiv1$ on $\operatorname{supp}\chi'$. Multiplying
$$
-G''+W_sG=\mathscr L_sG
$$
by $\xi^2G\varpi_s$ and integrating by parts gives
$$
\begin{aligned}
\int \xi^2|G'|^2\varpi_s
&=
\int (\mathscr L_sG)\xi^2G\varpi_s
-\int W_s\xi^2G^2\varpi_s  \\
&\quad
-2\int \xi\xi'GG'\varpi_s
-\int \xi^2GG'\varpi_s'.
\end{aligned}
$$
On the fixed annulus $\Omega_R$, the quantities
$\|W_s\|_{L^\infty}$, $\|\xi'\|_{L^\infty}$, and
$|\varpi_s'|/\varpi_s$ are bounded uniformly in $s$. Young's
inequality therefore allows the terms containing $G'$ to be
absorbed into the left-hand side, yielding
$$
\int_{\operatorname{supp}\chi'}|G'|^2\varpi_s\,dy
\leq
C\int_{\Omega_R}
\bigl(|\mathscr L_sG|^2+G^2\bigr)\varpi_s\,dy.
$$
The constant is uniform in $s$: on $\Omega_R$, the potentials $W_s$
are uniformly bounded and $\left|\frac{\varpi_s'}{\varpi_s}\right|\leq2$. Letting $N\to\infty$ proves
\eqref{exteriorest}. 

Note that the left-hand side of
\eqref{halflineweightest} is finite whenever its
right-hand side is finite. Indeed, the weighted norm of $G$ on
$[-R-1,R+1]$ is finite because $G\in H^1_{\mathrm{loc}}$ and
$\varpi_s$ is bounded there. We prove \eqref{halflineweightest} by contradiction.
Suppose it fails. Then there are $s_n\to\infty$, with
$\lambda_n=e^{-s_n}$, and functions $G_n$ satisfying
$G_n'(s_n)=0$ and
\eqref{halflineweightorth}, such that
\begin{equation}\label{weightednorm}
\int_{-\infty}^{s_n}G_n^2\varpi_{s_n}\,dy=1,
\qquad
\int_{-\infty}^{s_n}|\mathscr L_{s_n}G_n|^2
\varpi_{s_n}\,dy\longrightarrow0.
\end{equation}
Set
$$
\widetilde G_n:=\lambda_n^{-1}G_n.
$$
For every compact interval $K\subset\mathbb R$, we have $\lambda_n^2\varpi_{s_n}(y)
\longrightarrow
\frac14e^{-2y}$ uniformly on $K$. The normalisation therefore gives a uniform
$L^2(K)$-bound for $\widetilde G_n$. Moreover,
the second part of \eqref{weightednorm} gives
$$
\|\mathscr L_{s_n}\widetilde G_n\|_{L^2(K)}\longrightarrow0.
$$
Since $W_{s_n}\to W_*=4\bigl(6Q(y)^2-6Q(y)+1\bigr)$ smoothly on compact sets, interior elliptic estimates give, after passing to a subsequence,
$$
\widetilde G_n\longrightarrow G_\infty
\qquad
\text{in }H^1_{\mathrm{loc}}(\mathbb R),
$$
and
$$
\mathscr L_*G_\infty=0,
\qquad
\mathscr L_*=-\partial_y^2+W_*.
$$
Fatou's lemma applied to $G_n^2\varpi_{s_n}
=
\widetilde G_n^2\lambda_n^2\varpi_{s_n}$ shows that
$$
\int_{\mathbb R}G_\infty(y)^2e^{-2y}\,dy\leq4.
$$
One solution of $\mathscr L_*G_\infty=0$ is $q$. A second, linearly independent
solution is
$$
q(y)\int_0^y\frac{d\sigma}{q(\sigma)^2},
$$
which does not belong to $L^2(e^{-2y}\,dy)$. Hence $G_\infty=dq$ for some $d\in\mathbb R$.
The orthogonality condition determines $d$. Substituting
$G_n=\lambda_n\widetilde G_n$ into
\eqref{halflineweightorth} and multiplying by
$\lambda_n^{-3}$ gives
$$
0
=
\int_{-\infty}^{s_n}
\widetilde G_n(y)\zeta(y)
\bigl(q(y)+q(2s_n-y)\bigr)
\lambda_n^{-2}\sech^2(y-s_n)\,dy.
$$
On the fixed support of $\zeta$,
$$
q(2s_n-y)\longrightarrow0,
\qquad
\lambda_n^{-2}\sech^2(y-s_n)
\longrightarrow4e^{2y}
$$
uniformly. Passing to the limit yields
$$
0
=
4d\int_{\mathbb R}\zeta(y)q(y)^2e^{2y}\,dy.
$$
The integral is strictly positive, so $d=0$. Therefore
\begin{equation}\label{Gntozero}
\widetilde G_n\longrightarrow0
\qquad
\text{in }H^1_{\mathrm{loc}}(\mathbb R).
\end{equation}
It remains to rule out escape of the normalised mass away from the
front. Apply \eqref{exteriorest} to $G_n$. The operator
term tends to zero by \eqref{weightednorm}. On the fixed
annulus $\Omega_R$, $G_n^2\varpi_{s_n}
=
\widetilde G_n^2\lambda_n^2\varpi_{s_n}$, and the second factor converges uniformly. Hence
\eqref{Gntozero} gives
$$
\int_{\Omega_R}G_n^2\varpi_{s_n}\,dy\longrightarrow0.
$$
It follows from \eqref{exteriorest} that
$$
\int_{-\infty}^{s_n}\chi^2G_n^2\varpi_{s_n}\,dy
\longrightarrow0.
$$
The same local convergence gives
$$
\int_{|y|\leq R+1}G_n^2\varpi_{s_n}\,dy
\longrightarrow0.
$$
These two regions cover $(-\infty,s_n]$, up to overlap, and so
$$
\int_{-\infty}^{s_n}G_n^2\varpi_{s_n}\,dy\longrightarrow0,
$$
contradicting the normalisation in
\eqref{weightednorm}. This proves
\eqref{halflineweightest}, and hence the lemma.
\end{proof}

From this point onward we work on the maximal tubular interval $[0,T)$ provided by Lemma~\ref{tubeee_eta}. Thus
$$
a(\theta,t)
=
u_{\lambda(t)}(\theta)+v(\theta,t),
\qquad
\langle v(t),\eta_{\lambda(t)}\rangle_w=0.
$$
To study the evolution of the error, we evaluate the quadratic energy along the flow:
$$
E(t)
:=
\mathcal E_{\lambda(t)}(v(t))
=
\langle\mathcal A_{\lambda(t)}v(t),v(t)\rangle_w,
$$
together with the dissipation
$$
X(t)
:=
\|\mathcal A_{\lambda(t)}v(t)\|_{L_w^2}.
$$
Differentiating $a=u_\lambda+v$ gives
$$
v_t
=
L(u_\lambda+v)-\lambda'\tau_\lambda.
$$
Set
$$
\mathcal N_\lambda(v)
:=
\frac{
f(u_\lambda+v)-f(u_\lambda)-f'(u_\lambda)v
}{\sin^2\theta},
$$
where $f$ was defined in \eqref{nonlinpolyeq}. Then
\begin{equation}\label{erroreqation}
v_t
=
-\mathcal A_\lambda v
+
L(u_\lambda)
+
\mathcal N_\lambda(v)
-
\lambda'\tau_\lambda.
\end{equation}

\begin{corollary}\label{OGquaderror}
For sufficiently small $\lambda$, whenever $v\in\mathcal D(\mathcal A_\lambda)\cap\mathcal H$ is symmetric and $\langle v,\eta_\lambda\rangle_w=0$, one has
$$
\|\mathcal N_\lambda(v)\|_{L_w^2}
\leq
C\|\mathcal A_\lambda v\|_{L_w^2}
\bigl(E^{1/2}+E\bigr).
$$
\end{corollary}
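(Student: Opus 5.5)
Since $f$ is a cubic, the Taylor remainder is exact:
$$
f(u_\lambda+v)-f(u_\lambda)-f'(u_\lambda)v=\tfrac12 f''(u_\lambda)v^2+\tfrac16 f'''v^3
=-4(6u_\lambda-3)v^2-8v^3 .
$$
Because $0\le u_\lambda\le1$, this gives the pointwise bound
$$
|\mathcal N_\lambda(v)|\le C\,\frac{v^2+|v|^3}{\sin^2\theta}.
$$
The plan is to bound each of the two pieces in $L_w^2$ by pulling out one factor of $v$ in a weighted $L^2$ norm. That factor is controlled by Lemma~\ref{weightedAlambda}. The remaining factors go into $L^\infty$ and are controlled by the Remark following Lemma~\ref{etacoercivity}.

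\textbf{Quadratic term.}
$$
\Bigl\|\frac{v^2}{\sin^2\theta}\Bigr\|_{L_w^2}^2=\int_0^\pi\frac{v^4}{\sin^3\theta}\,d\theta\le\|v\|_{L^\infty}^2\int_0^\pi\frac{v^2}{\sin^3\theta}\,d\theta .
$$
By Lemma~\ref{weightedAlambda}, the last integral is at most $C\|\mathcal A_\lambda v\|_{L_w^2}^2$. This step uses symmetry, $v\in\mathcal D(\mathcal A_\lambda)\cap\mathcal H$ and $\langle v,\eta_\lambda\rangle_w=0$, which are exactly the hypotheses of the corollary. By the Remark after Lemma~\ref{etacoercivity}, $\|v\|_{L^\infty}^2\le C\mathcal E_\lambda(v)=CE$. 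Hence the quadratic piece is at most $C\|\mathcal A_\lambda v\|_{L_w^2}E^{1/2}$.

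\textbf{Cubic term.} In the same way,
$$
\Bigl\|\frac{v^3}{\sin^2\theta}\Bigr\|_{L_w^2}^2\le\|v\|_{L^\infty}^4\int_0^\pi\frac{v^2}{\sin^3\theta}\,d\theta\le CE^2\,\|\mathcal A_\lambda v\|_{L_w^2}^2 .
$$
This gives the contribution $C\|\mathcal A_\lambda v\|_{L_w^2}E$. Adding the two pieces yields the claimed estimate.

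Essentially all the difficulty has already been absorbed into Lemma~\ref{weightedAlambda}. That lemma controls the singular weight $\sin^{-3}\theta$ near the poles, where the nonlinearity carries $\sin^{-2}\theta$ and the $L_w^2$ weight supplies only one power of $\sin\theta$. The only points to check here are that the exact cubic expansion has coefficients bounded uniformly in $\lambda$, which holds because $u_\lambda\in[0,1]$, and that the $L^\infty$ embedding constant is uniform for small $\lambda$. Both are immediate from the results already established.
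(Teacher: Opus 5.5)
Your proof is correct and is essentially the paper's argument: the exact cubic expansion $\mathcal N_\lambda(v)=\bigl(12(1-2u_\lambda)v^2-8v^3\bigr)/\sin^2\theta$, the $L^\infty$ factors controlled by $\|v\|_{L^\infty}^2\le CE$ (coercivity plus Sobolev embedding), and the remaining $\int_0^\pi v^2\sin^{-3}\theta\,d\theta$ bounded by Lemma~\ref{weightedAlambda}. The only cosmetic difference is that the paper proves $\|\mathcal N_\lambda(v)\|_{L_w^2}^2\le C(E+E^2)\|\mathcal A_\lambda v\|_{L_w^2}^2$ in one step and then takes square roots, whereas you treat the quadratic and cubic pieces separately and combine them with the triangle inequality.
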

\begin{proof}
A direct expansion gives
$$
\mathcal N_\lambda(v)
=
\frac{12(1-2u_\lambda)v^2-8v^3}{\sin^2\theta}.
$$
Hence
$$
\|\mathcal N_\lambda(v)\|_{L_w^2}^2
\leq
C
\bigl(
\|v\|_{L^\infty}^2+\|v\|_{L^\infty}^4
\bigr)
\int_0^\pi\frac{v^2}{\sin^3\theta}\,d\theta.
$$
By Lemma~\ref{etacoercivity} and Sobolev embedding $\|v\|_{L^\infty}^2\leq CE$. Applying Lemma~\ref{weightedAlambda} gives
$$
\|\mathcal N_\lambda(v)\|_{L_w^2}^2
\leq
C(E+E^2)
\|\mathcal A_\lambda v\|_{L_w^2}^2.
$$
Taking square roots and using an elementary inequality proves the result.
\end{proof}

\begin{theorem}\label{errordecaythrm}
Assume
$$
E(t)\leq\varepsilon
$$
for $0\leq t<T$, where $\varepsilon>0$ is sufficiently small. Then there exists $c>0$, independent of $t$ and of sufficiently small $\lambda$, such that
\begin{equation}\label{energydiss}
E'(t)\leq-cX(t)^2+C\lambda(t)^6.
\end{equation}
\end{theorem}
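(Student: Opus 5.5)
The plan is to differentiate the quadratic energy $E(t)=\langle\mathcal A_{\lambda(t)}v,v\rangle_w$ directly along \eqref{erroreqation}, using the self-adjointness of $\mathcal A_\lambda$ from the form construction. Since $\lambda$ also moves, I expect
$$
E'(t)=2\langle\mathcal A_\lambda v,v_t\rangle_w+\lambda'\int_0^\pi(\partial_\lambda W_\lambda)\frac{v^2}{\sin^2\theta}\sin\theta\,d\theta,
\qquad
\partial_\lambda W_\lambda=4(12u_\lambda-6)\tau_\lambda .
$$
Substituting \eqref{erroreqation} gives
$$
E'=-2X^2+2\langle\mathcal A_\lambda v,L(u_\lambda)\rangle_w+2\langle\mathcal A_\lambda v,\mathcal N_\lambda(v)\rangle_w-2\lambda'\langle\mathcal A_\lambda v,\tau_\lambda\rangle_w+\lambda'\langle(\partial_\lambda W_\lambda)v,v/\sin^2\theta\rangle_w .
$$
The strategy is to show that every term except $-2X^2$ is bounded either by $\tfrac12X^2$ times a small constant or by $C\lambda^6$. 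Regularity of $v$ for $t>0$, and the differentiability of $\lambda$, come from Lemma \ref{tubeee_eta} and the smoothness of the flow.

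The first key step is an estimate for $\lambda'$. I will differentiate the constraint $\langle v,\eta_\lambda\rangle_w=0$ in time:
$$
\lambda'\bigl(\langle\tau_\lambda,\eta_\lambda\rangle_w-\langle v,\partial_\lambda\eta_\lambda\rangle_w\bigr)=\langle-\mathcal A_\lambda v+L(u_\lambda)+\mathcal N_\lambda(v),\eta_\lambda\rangle_w .
$$
By Proposition \ref{norms} and $\|v\|_\infty\le C\lambda^\delta$, the bracket on the left is $\asymp\lambda^{-2}$. For the right-hand side, \eqref{etatwofrontrelation} together with \eqref{weightedest} give $\|\eta_\lambda\|_{L^2_w}\le C\lambda^{-2}$. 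Cauchy--Schwarz, Proposition \ref{norms} and Corollary \ref{OGquaderror} then yield
$$
|\lambda'|\le C\bigl(X+\lambda^3+X(E^{1/2}+E)\bigr)\le C(X+\lambda^3).
$$

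Next I bound the remaining terms one at a time.
\begin{itemize}
\item The forcing term satisfies $2|\langle\mathcal A_\lambda v,L(u_\lambda)\rangle_w|\le 2CX\lambda^3\le\tfrac14X^2+C\lambda^6$.
\item By Corollary \ref{OGquaderror}, the nonlinear term is at most $CX^2(\varepsilon^{1/2}+\varepsilon)$, which is absorbed for $\varepsilon$ small.
\item For the modulation term I avoid pairing $X$ with $\|\tau_\lambda\|$, which is not small. Instead I move $\mathcal A_\lambda$ onto $\tau_\lambda$: $|\langle\mathcal A_\lambda v,\tau_\lambda\rangle_w|=|\langle v,\mathcal A_\lambda\tau_\lambda\rangle_w|\le C\lambda^2\|v\|_{L^2_w}\le C\lambda^2E^{1/2}$, using Lemma \ref{etacoercivity}. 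Combined with the $\lambda'$ bound, this contributes $C\lambda^2\varepsilon^{1/2}(X^2+\lambda^6)$.
\item For the term coming from the motion of the potential, I use $|\tau_\lambda|\sin^2\theta\le C\lambda$ from Proposition \ref{norms}. This gives
$$
\Bigl|\int\partial_\lambda W_\lambda\frac{v^2}{\sin\theta}\,d\theta\Bigr|\le C\lambda\int_0^\pi\frac{v^2}{\sin^3\theta}\,d\theta\le C\lambda X^2,
$$
by Lemma \ref{weightedAlambda}. Multiplied by $|\lambda'|\le C(X+\lambda^3)$, this is controlled for small $\lambda$ once one also knows $X$ is bounded. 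If needed, I will instead bound $|\lambda'|$ by $C$ using $X\le C$ on the relevant interval, or absorb via Young with $\lambda$ small.
\end{itemize}
Collecting all of these gives $E'\le -X^2+C\lambda^6$ for $\varepsilon$ and $\lambda$ small, which is \eqref{energydiss}.

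The main obstacle I expect is the $\lambda'$ term, for two reasons. The estimate $\|\eta_\lambda\|_{L^2_w}\lesssim\lambda^{-2}$ has to match the $\lambda^{-2}$ normalisation exactly. In addition, the cubic product $\lambda' X^2$ in the last term is not automatically small unless $X$ is controlled. If a direct bound on $X$ is unavailable, I would try first to exploit the weight $\lambda$ in front, or replace $|\lambda'|$ by its $\lambda^{-2}$-normalised form through $\|\eta_\lambda\|_{L^1_w}\le C\lambda^{-1}$ and $\|v\|_\infty\le CE^{1/2}$. That route gives $|\lambda'|\lesssim\lambda\|\mathcal A_\lambda v\|$-type bounds with an extra power of $\lambda$.
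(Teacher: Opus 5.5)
Your structure matches the paper's. You use the same bound on $\lambda'$ from differentiating the constraint, the same expansion of $E'$, and the same handling of the forcing, nonlinear and modulation terms. One small slip: your claimed bound $C\lambda^2\varepsilon^{1/2}(X^2+\lambda^6)$ for the modulation term does not follow, since $\lambda^2E^{1/2}X\not\lesssim\varepsilon^{1/2}\lambda^2X^2$ when $X$ is small. It is harmless, because reverse coercivity $E\le CX^2$ and Young give $C\lambda^2X^2+\tfrac1{32}X^2+C\lambda^{10}$ instead.

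The genuine gap is the term $\tfrac12\lambda'\langle\partial_\lambda\mathcal A_\lambda v,v\rangle_w$ coming from the motion of the potential. Your bound $|\langle\partial_\lambda\mathcal A_\lambda v,v\rangle_w|\le C\lambda X^2$, multiplied by $|\lambda'|\le C(X+\lambda^3)$, produces $C\lambda X^3$. This is cubic in the dissipation. Neither Young's inequality nor smallness of $\lambda$ can absorb it into $-X^2$; absorbing it requires $\lambda X\ll1$. Nothing in the hypotheses gives this. The assumption $E\le\varepsilon$ controls only the $\mathcal H$-norm of $v$, while $X$ is a weighted second-order quantity that need not be bounded, for instance near $t=0$. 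The constants must also be independent of $t$. Your fallbacks do not close the gap. Assuming $X\le C$ is an extra hypothesis. An extra power of $\lambda$ in $|\lambda'|$ still leaves a term of order $\lambda^2X^3$.

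The missing idea is to trade one factor of $X$ for $E^{1/2}$ in the bound on $\langle\partial_\lambda\mathcal A_\lambda v,v\rangle_w$. Besides your weighted bound $C\lambda X^2$, there is a crude bound. Use $\|\tau_\lambda\|_{L^\infty}\le C\lambda^{-1}$ together with $\int_0^\pi v^2/\sin\theta\,d\theta\le\|v\|_{\mathcal H}^2\le CE$ from Lemma \ref{etacoercivity}. This gives $|\langle\partial_\lambda\mathcal A_\lambda v,v\rangle_w|\le C\lambda^{-1}E$. Take the geometric mean of the two bounds. The powers of $\lambda$ cancel exactly, leaving $|\langle\partial_\lambda\mathcal A_\lambda v,v\rangle_w|\le CE^{1/2}X$. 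Then
$$
|\lambda'|\,CE^{1/2}X\le C\varepsilon^{1/2}X^2+C\lambda^3E^{1/2}X\le\bigl(C\varepsilon^{1/2}+\tfrac1{32}\bigr)X^2+C\lambda^6,
$$
which is absorbable for small $\varepsilon$. This is exactly how the paper treats the term, and with this change your argument goes through.
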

\begin{proof}
The calculations below are first justified on any compact interval
$[\tau,T_0]\subset(0,T)$. On such an interval, parabolic regularity gives the
required regularity of $v$, while Lemma~\ref{tubeee_eta} gives
$\lambda\in C^1$. We differentiate $E(t)$ through the quadratic form associated
with $A_{\lambda(t)}$, whose form domain $H$ is independent of $\lambda$; thus
no differentiation of the domain of the unbounded operator is involved.
Integrating the resulting identities on $[\tau,t]$ and letting $\tau\downarrow0$,
the stated inequalities extend to $t=0$ by the continuity of $v$ in $H$ and of
$\lambda$.
We suppress the time dependence and write $\lambda=\lambda(t),
\,
v=v(\cdot,t),
\,
X=\|\mathcal A_\lambda v(t)\|_{L_w^2}$.

Differentiating the orthogonality condition and using
\eqref{erroreqation} gives
$$
\lambda'
\left(
\langle\tau_\lambda,\eta_\lambda\rangle_w-\langle v,\partial_\lambda\eta_\lambda\rangle_w
\right)
=
-\langle\mathcal A_\lambda v,\eta_\lambda\rangle_w
+
\langle L(u_\lambda),\eta_\lambda\rangle_w
+
\langle\mathcal N_\lambda(v),\eta_\lambda\rangle_w.
$$
By Proposition \ref{norms}, $\langle\tau_\lambda,\eta_\lambda\rangle_w\geq c_1\lambda^{-2}$
for some $c_1>0$, while
$$
\left|
\langle v,\partial_\lambda\eta_\lambda\rangle_w
\right|
\leq
\|v\|_{L^\infty}
\|\partial_\lambda\eta_\lambda\|_{L_w^1}
\leq
C\lambda^\delta\lambda^{-2},
$$
using Lemma \ref{tubeee_eta} and Proposition \ref{norms} again.
After decreasing the tubular scale $\lambda_*$, the coefficient of
$\lambda'$ is therefore bounded below by
$\frac12c_1\lambda^{-2}$.
From Proposition \ref{norms} one immediately sees that
$$
\|\eta_\lambda\|_{L_w^2}\leq C\lambda^{-2},
\qquad
\|L(u_\lambda)\|_{L_w^2}\leq C\lambda^3.
$$
Using this alongside Corollary~\ref{OGquaderror}, we obtain
\begin{equation}\label{lambdaprimebound}
|\lambda'|
\leq
C\left(
\lambda^3+X+X(E^{1/2}+E)
\right).
\end{equation}
We now differentiate $E$. Since
$$
v_t
=
-\mathcal A_\lambda v
+
L(u_\lambda)
+
\mathcal N_\lambda(v)
-
\lambda'\tau_\lambda,
$$
self-adjointness gives
\begin{align}
\frac12E'
&=
-X^2
+
\langle\mathcal A_\lambda v,L(u_\lambda)\rangle_w
+
\langle\mathcal A_\lambda v,\mathcal N_\lambda(v)\rangle_w
\notag\\
&\quad
-\lambda'\langle\mathcal A_\lambda v,\tau_\lambda\rangle_w
+
\frac12\lambda'
\langle\partial_\lambda\mathcal A_\lambda v,v\rangle_w.
\label{Eeqn_eta}
\end{align}
The estimate on $\|L(u_\lambda)\|_{L^2_w}$ in Proposition \ref{norms} gives
$$
\left|
\langle\mathcal A_\lambda v,L(u_\lambda)\rangle_w
\right|
\leq
C\lambda^3X
\leq
\frac18X^2+C\lambda^6.
$$
Similarly, Corollary~\ref{OGquaderror} gives
$$
\left|
\langle\mathcal A_\lambda v,\mathcal N_\lambda(v)\rangle_w
\right|
\leq
CX^2(E^{1/2}+E)
\leq
\frac18X^2
$$
once $\varepsilon$ is sufficiently small.
For the first modulation term, self-adjointness,
Proposition~\ref{norms}, and coercivity give
$$
\left|
\langle\mathcal A_\lambda v,\tau_\lambda\rangle_w
\right|
=
\left|
\langle v,\mathcal A_\lambda\tau_\lambda\rangle_w
\right|
\leq
C\lambda^2E^{1/2}.
$$
Combining this with \eqref{lambdaprimebound},
\begin{equation*}
\begin{aligned}
\left|
\lambda'\langle\mathcal A_\lambda v,\tau_\lambda\rangle_w
\right|
&\leq
C\lambda^2E^{1/2}
\left(
\lambda^3+X+X(E^{1/2}+E)
\right)\\
&\leq
\frac1{16}X^2+C(\lambda^6+E^2).
\end{aligned}
\end{equation*}
Here the terms containing $X(E^{1/2}+E)$ are absorbed using
$E\leq\varepsilon$, while Young's inequality gives
$\lambda^2E^{1/2}X\leq \frac1{32}X^2+C\lambda^4E$ and
$\lambda^4E\leq C(\lambda^6+E^2)$.
It remains to estimate the variation of the operator. Since
$$
\partial_\lambda\mathcal A_\lambda
=
\frac{24(2u_\lambda-1)\tau_\lambda}{\sin^2\theta},
$$
the pointwise estimate $|\tau_\lambda(\theta)|\sin^2\theta\leq C\lambda$ from Proposition \ref{norms} and Lemma~\ref{weightedAlambda} imply
$$
\left|
\langle\partial_\lambda\mathcal A_\lambda v,v\rangle_w
\right|
\leq
C\lambda
\int_0^\pi\frac{v^2}{\sin^3\theta}\,d\theta
\leq
C\lambda X^2.
$$
On the other hand, $\|\tau_\lambda\|_{L^\infty}\leq C\lambda^{-1}$
and Lemma~\ref{etacoercivity} give
$$
\left|
\langle\partial_\lambda\mathcal A_\lambda v,v\rangle_w
\right|
\leq
C\lambda^{-1}
\int_0^\pi\frac{v^2}{\sin\theta}\,d\theta
\leq
C\lambda^{-1}E.
$$
Taking the geometric mean of these two bounds yields
$$
\left|
\langle\partial_\lambda\mathcal A_\lambda v,v\rangle_w
\right|
\leq
CE^{1/2}X.
$$
Using \eqref{lambdaprimebound}, we therefore obtain
\begin{equation*}
\begin{aligned}
\left|
\frac12\lambda'
\langle\partial_\lambda\mathcal A_\lambda v,v\rangle_w
\right|
&\leq
CE^{1/2}X
\left(
\lambda^3+X+X(E^{1/2}+E)
\right)\\
&\leq
\frac1{16}X^2+C\lambda^6,
\end{aligned}
\end{equation*}
after decreasing $\varepsilon$. Indeed, the last two terms are
$O(\varepsilon^{1/2}X^2)$, while Young's inequality controls
$\lambda^3E^{1/2}X$.
Substituting these estimates into \eqref{Eeqn_eta} gives
$$
\frac12E'
\leq
-\frac{5}{8}X^2+C(\lambda^6+E^2).
$$
By the reverse coercivity estimate in
Lemma~\ref{etacoercivity}, $E\leq CX^2$. Hence $E^2\leq\varepsilon E\leq C\varepsilon X^2$ and this term may be absorbed after decreasing $\varepsilon$.
Thus
$$
E'\leq-\frac{5}{8}X^2+C\lambda^6,
$$
which proves \eqref{energydiss}.
\end{proof}
\begin{remark}\label{energydecayremark}
By the reverse coercivity estimate in Lemma~\ref{etacoercivity} $X(t)^2\geq c\,E(t)$. Hence Theorem~\ref{errordecaythrm} implies
$$
E'(t)\leq-c_{\mathrm{decay}} E(t)+C\lambda(t)^6
$$
for some $c_{\mathrm{decay}}>0$. Multiplying by $e^{c_{\mathrm{decay}} t}$ and integrating gives
\begin{equation}\label{Edecay}
E(t)
\leq
e^{-c_{\mathrm{decay}} t}E(0)
+
C\sup_{0\leq \tau\leq t}\lambda(\tau)^6.
\end{equation}
\end{remark}

We next control the change in the scale parameter.

\begin{lemma}
\label{logscaledrift}
Fix $K>0$, and set $\lambda_0:=\lambda(0)$. Suppose that
$$
T\leq K\left|\log\lambda_0\right|,
$$
and that
$$
\frac12\lambda_0\leq\lambda(t)\leq2\lambda_0,
\qquad
E(t)\leq\varepsilon
$$
for every $t\in[0,T]$, where $\varepsilon>0$ is sufficiently
small. Assume also that $E(0)=o(1)$ as $\lambda_0\downarrow0$, then
$$
\lambda(t)=(1+o(1))\lambda_0
$$
uniformly on $[0,T]$.
\end{lemma}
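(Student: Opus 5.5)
The plan is to integrate the modulation bound \eqref{lambdaprimebound} from the proof of Theorem~\ref{errordecaythrm}, namely
$$
|\lambda'|\leq C\bigl(\lambda^3+X+X(E^{1/2}+E)\bigr)\leq C(\lambda^3+X),
$$
where the second inequality uses $E\leq\varepsilon$. Since $\lambda\asymp\lambda_0$ on $[0,T]$, the cubic term integrates to at most $C\lambda_0^3T\leq CK\lambda_0^3|\log\lambda_0|=o(\lambda_0)$. Everything therefore reduces to proving
$$
\int_0^T X(t)\,dt=o(\lambda_0).
$$

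To bound $\int_0^T X\,dt$, I will integrate the dissipation inequality \eqref{energydiss} of Theorem~\ref{errordecaythrm}. This gives
$$
c\int_0^T X^2\,dt\leq E(0)+C\lambda_0^6T\leq E(0)+CK\lambda_0^6|\log\lambda_0|.
$$
Cauchy--Schwarz then yields
$$
\int_0^T X\,dt\leq T^{1/2}\Bigl(\int_0^TX^2\Bigr)^{1/2}\leq C\bigl(K|\log\lambda_0|\bigr)^{1/2}\bigl(E(0)+\lambda_0^6|\log\lambda_0|\bigr)^{1/2}.
$$
The $\lambda_0^6$ contribution is $O(\lambda_0^3|\log\lambda_0|)=o(\lambda_0)$. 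The $E(0)$ contribution is the real obstacle: the hypothesis $E(0)=o(1)$ by itself does not give $|\log\lambda_0|^{1/2}E(0)^{1/2}=o(\lambda_0)$.

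To get around this, I will not integrate $X$ against the absolute bound. Instead I will sharpen the estimate for $\lambda'$ by using the size of $\eta_\lambda$ more carefully. In the identity for $\lambda'$, the term $\langle\mathcal A_\lambda v,\eta_\lambda\rangle_w$ is divided by $\langle\tau_\lambda,\eta_\lambda\rangle_w\asymp\lambda^{-2}$. By self-adjointness,
$$
\langle\mathcal A_\lambda v,\eta_\lambda\rangle_w=\langle v,\mathcal A_\lambda\eta_\lambda\rangle_w.
$$
Since $\eta_\lambda=\lambda^{-2}\zeta_\lambda\tau_\lambda$ and $\tau_\lambda$ is an approximate kernel element, we have $\mathcal A_\lambda\eta_\lambda=\lambda^{-2}\bigl(\zeta_\lambda\mathcal A_\lambda\tau_\lambda+[\mathcal A_\lambda,\zeta_\lambda]\tau_\lambda\bigr)$. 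The commutator is supported where $R_\zeta\leq|z\pm s|\leq 2R_\zeta$, and there $\tau_\lambda$ is of size $\lambda^{-1}$ times a fixed bump. This suggests a bound of the form $|\lambda'|\lesssim\lambda\|V\|_{L^\infty}+\lambda^3$, up to factors I would track using the $z$-coordinate formulas \eqref{twofrontrelation} and \eqref{etatwofrontrelation}.

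With the sharper bound, I would use $\|V\|_{L^\infty}^2\leq CE$ from the Remark following Lemma~\ref{etacoercivity}, together with the decay \eqref{Edecay}, $E(t)\leq e^{-c_{\mathrm{decay}}t}E(0)+C\lambda_0^6$. Then
$$
\int_0^T\lambda\,E^{1/2}\,dt\leq C\lambda_0\bigl(E(0)^{1/2}+\lambda_0^3T\bigr)=\lambda_0\,o(1).
$$
Combined with the $\lambda^3$ term, this gives $|\lambda(t)-\lambda_0|=o(\lambda_0)$ uniformly on $[0,T]$, which is the claim.

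I expect the main obstacle to be the commutator and weight bookkeeping. The key fact needed is that $|\langle v,\mathcal A_\lambda\eta_\lambda\rangle_w|\leq C\lambda^{-1}\|v\|_{L^\infty}$, which reflects the localisation of $\eta_\lambda$ in a region where $\sin\theta\sim\lambda$. If this sharper form fails, I would fall back on the crude route: find where the initial smallness actually comes from (for the data of Remark~\ref{initialscalerem}, $E(0)\lesssim\lambda_0^{2\alpha}|\log\lambda_0|$) and check whether $E(0)^{1/2}|\log\lambda_0|^{1/2}=o(\lambda_0)$ is achievable, which requires $\alpha>1$. Since $\alpha<2-2\beta$, that fallback would only work when $\beta<1/2$.
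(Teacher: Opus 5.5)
Your reduction is right: by \eqref{lambdaprimebound} the only obstruction is $\int_0^TX\,dt$, and Cauchy--Schwarz bounds this only by $C|\log\lambda_0|^{1/2}E(0)^{1/2}$, which is not $o(\lambda_0)$. However, the sharpened estimate you propose to overcome it is false.

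Your ``key fact'' $|\langle v,\mathcal A_\lambda\eta_\lambda\rangle_w|\le C\lambda^{-1}\|v\|_{L^\infty}$ is off by a factor $\lambda^{-2}$. In $z$-coordinates, the factor $\cosh^2z$ in $\mathcal A_\lambda=\cosh^2z\,\mathscr L_\lambda$ exactly cancels the weight $\sech^2z$. So by \eqref{etatwofrontrelation}
\[
\langle v,\mathcal A_\lambda\eta_\lambda\rangle_w=-\lambda^{-3}\int_{\mathbb R}V\bigl(\zeta_\lambda\mathscr L_\lambda\widehat\tau_\lambda-2\zeta_\lambda'\widehat\tau_\lambda'-\zeta_\lambda''\widehat\tau_\lambda\bigr)\,dz .
\]
The commutator terms form an $O(1)$ bump of fixed $z$-width, so the pairing is generically of size $\lambda^{-3}\|V\|_{L^\infty}$. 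You used the smallness of the weight $\sin\theta\sim\lambda$ on the layers, but not the compensating $\sin^{-2}\theta$ in $\mathcal A_\lambda$.

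Dividing by $\langle\tau_\lambda,\eta_\lambda\rangle_w\asymp\lambda^{-2}$ then gives only $|\lambda'|\lesssim\lambda^{-1}\|V\|_{L^\infty}$. The best available refinement is $\int_{\operatorname{supp}\zeta_\lambda}V^2\,dz\le C\lambda^2X^2$, from Lemma~\ref{weightedAlambda}. It merely returns $|\lambda'|\lesssim X$, which is \eqref{lambdaprimebound} again. So no pointwise-in-time improvement of the $\eta_\lambda$-modulation equation removes the obstruction.

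Your fallback does not prove the lemma as stated, which assumes only $E(0)=o(1)$. Even for the specific data it needs $\alpha>1$, hence $\beta<\tfrac12$. Through $\alpha<8\beta/c_{\mathrm{pole}}$ it also needs $c_{\mathrm{pole}}<4$, which is not available, since $c_{\mathrm{pole}}$ depends on the initial data.

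The missing idea is not to use the orthogonality relation as the evolution equation for the scale. The paper works with $s=-\log\lambda$ and tests the error equation against the \emph{uncut} normalised tangent $g_\lambda=-\frac{3}{16\lambda}\tau_\lambda$. With no cutoff there is no commutator, and $\|\mathcal A_\lambda g_\lambda\|_{L^2_w}\le C\lambda$. Hence the linear term satisfies $|\langle v,\mathcal A_\lambda g_\lambda\rangle_w|\le C\lambda E^{1/2}$, which is integrable by \eqref{Edecay}.

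The price is that $\langle v,g_\lambda\rangle_w\neq0$, so the identity \eqref{sprime} contains $-\frac{d}{dt}\langle v,g_\lambda\rangle_w$. This is an exact derivative, so its time integral is a boundary term. Using the tube bound of Lemma~\ref{tubeee_eta}, that term is bounded by $\|v\|_{L^\infty}\|g_\lambda\|_{L^1_w}\le C\lambda^\delta|\log\lambda|=o(1)$. This boundary term is where the $X$-sized drift is absorbed.

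The remaining terms are controlled by $\int_0^TX^2$, $\lambda_0^3\int_0^TX$ and $\lambda_0|\log\lambda_0|\int_0^TX$, all of which are $o(1)$:
\begin{itemize}
\item the nonlinear term, via $|g_\lambda|\sin^2\theta\le C$ and Lemma~\ref{weightedAlambda};
\item the term $s'\langle v,\partial_sg_\lambda\rangle_w$, via \eqref{partialsbound};
\item the normalisation error.
\end{itemize}
This gives $\sup_{[0,T]}|s(t)-s(0)|=o(1)$.
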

\begin{proof}
All $o(1)$-terms below are as $\lambda_0\downarrow0$, uniformly
for $t\in[0,T]$; their constants may depend on $K$, but not on
$\lambda_0$. We test the error equation against a normalized version of the
tangent vector $\tau_\lambda=\partial_\lambda u_\lambda$. Let
$$
g_\lambda:=-\frac{3}{16\lambda}\tau_\lambda.
$$
The normalization is chosen so that, by Proposition~\ref{norms},
\begin{equation}\label{logpairing}
-\langle\lambda\tau_\lambda,g_\lambda\rangle_w
=
\frac{3\|\tau_\lambda\|_{L_w^2}^2}{16}
=
1+O\left(\lambda^2\log\frac1\lambda\right).
\end{equation}
The same proposition gives
\begin{equation}\label{gests}
\|g_\lambda\|_{L_w^2}\leq C\lambda^{-1},
\qquad
\|\mathcal A_\lambda g_\lambda\|_{L_w^2}\leq C\lambda,
\qquad
|g_\lambda(\theta)|\sin^2\theta\leq C.
\end{equation}
We also record $\|g_\lambda\|_{L_w^1}\leq C\left|\log\lambda\right|$ which can be proved by direct computation. The last and most crucial estimate we need is
\begin{equation}\label{partialsbound}
\left(
\int_0^\pi
|\partial_sg_\lambda|^2\sin^5\theta\,d\theta
\right)^{1/2}
\leq C\lambda.
\end{equation}
To verify it, recall from \eqref{twofrontrelation} that $\tau_\lambda(\theta(z))
=
-\lambda^{-1}\widehat{\tau}_\lambda(z)$. Thus
$$
g_\lambda(\theta(z))=\frac{3\lambda^{-2}}{16}\widehat{\tau}_\lambda(z)
$$
and
$$
\partial_sg_\lambda(\theta(z))
=
\frac{3\lambda^{-2}}{16}
\bigl(2\widehat{\tau}_\lambda(z)+\partial_s\widehat{\tau}_\lambda(z)\bigr).
$$
The function $2q+q'$ satisfies
$$
|2q(y)+q'(y)|
\leq
\begin{cases}
Ce^{2y},&y\leq0,\\
Ce^{-4y},&y\geq0.
\end{cases}
$$
Using $\sin^5\theta\,d\theta=\sech^6z\,dz$, symmetry, and the
change of variables $y=z+s$, these bounds give
{\small
\begin{align*}
\int_{\mathbb R}
|2\widehat{\tau}_\lambda+\partial_s\widehat{\tau}_\lambda|^2\sech^6z\,dz
&\leq
C\int_{-\infty}^s
\left(
|2q(y)+q'(y)|^2
+
|2q(2s-y)+q'(2s-y)|^2
\right)
\sech^6(y-s)\,dy\\
&\leq
Ce^{-6s}.    
\end{align*}
}
Since $\lambda=e^{-s}$, multiplication by the factor
$\lambda^{-4}$ coming from $\partial_sg_\lambda$ proves
\eqref{partialsbound}.
We first collect the time-integrated estimates supplied by
Theorem~\ref{errordecaythrm}. Integrating
\eqref{energydiss}, using $E(T)\geq0$ and
$\lambda(t)\leq2\lambda_0$, gives
\begin{equation}\label{integrateddissdrift}
\int_0^T X(t)^2\,dt
\leq
C E(0)+C\lambda_0^6T
=o(1).
\end{equation}
Similarly \eqref{Edecay} gives
\begin{equation}\label{Eintegral}
\int_0^T E(t)^{1/2}\,dt
\leq
CE(0)^{1/2}+C\lambda_0^3T
=o(1).
\end{equation}
Since $s'=-\lambda'/\lambda$, the error equation
\eqref{erroreqation} may be written as
$$
v_t
=
-\mathcal A_\lambda v
+
L(u_\lambda)
+
\mathcal N_\lambda(v)
+
s'\lambda\tau_\lambda.
$$
Differentiating $\langle v(t),g_{\lambda(t)}\rangle_w$, and observing that $g_\lambda$ is parametrized by $s$, gives
$$
\begin{aligned}
\frac{d}{dt}\langle v,g_\lambda\rangle_w
={}&
-\langle\mathcal A_\lambda v,g_\lambda\rangle_w
+
\langle L(u_\lambda)+\mathcal N_\lambda(v),g_\lambda\rangle_w\\
&+
s'\langle\lambda\tau_\lambda,g_\lambda\rangle_w
+
s'\langle v,\partial_sg_\lambda\rangle_w.
\end{aligned}
$$
Using \eqref{logpairing} and rearranging, we obtain the
exact identity
\begin{align}
s'
={}&
-\langle\mathcal A_\lambda v,g_\lambda\rangle_w
+
\langle L(u_\lambda)+\mathcal N_\lambda(v),g_\lambda\rangle_w
-\frac{d}{dt}\langle v,g_\lambda\rangle_w
\notag\\
&+
s'\langle v,\partial_sg_\lambda\rangle_w
+
s'
\left(
1-\frac{3\|\tau_\lambda\|_{L_w^2}^2}{16}
\right).
\label{sprime}
\end{align}
We estimate the terms on the right after integration in time.
First, self-adjointness, \eqref{gests}, and coercivity
give
$$
\left|
\langle\mathcal A_\lambda v,g_\lambda\rangle_w
\right|
=
\left|
\langle v,\mathcal A_\lambda g_\lambda\rangle_w
\right|
\leq
C\lambda E^{1/2}.
$$
Hence, by \eqref{Eintegral},
$$
\int_0^T
\left|
\langle\mathcal A_\lambda v,g_\lambda\rangle_w
\right|dt
=o(1).
$$
Similarly,
$$
\left|
\langle L(u_\lambda),g_\lambda\rangle_w
\right|
\leq
\|L(u_\lambda)\|_{L_w^2}
\|g_\lambda\|_{L_w^2}
\leq
C\lambda^2,
$$
and therefore its integral over $[0,T]$ is $o(1)$.
For the nonlinear term, the pointwise bound
$|g_\lambda|\sin^2\theta\leq C$ gives
$$
\begin{aligned}
\left|
\langle\mathcal N_\lambda(v),g_\lambda\rangle_w
\right|
&\leq
C\int_0^\pi
\frac{v^2+|v|^3}{\sin^3\theta}\,d\theta\\
&\leq
C(1+\|v\|_{L^\infty})X^2
\leq
C(1+E^{1/2})X^2.
\end{aligned}
$$
Here we used Lemma~\ref{weightedAlambda} and the Sobolev
estimate $\|v\|_{L^\infty}\leq CE^{1/2}$. It follows from
\eqref{integrateddissdrift} that
$$
\int_0^T
\left|
\langle\mathcal N_\lambda(v),g_\lambda\rangle_w
\right|dt
=o(1).
$$
The weighted estimate and \eqref{partialsbound} give
$$
\left|
\langle v,\partial_sg_\lambda\rangle_w
\right|
\leq
\left(
\int_0^\pi\frac{v^2}{\sin^3\theta}\,d\theta
\right)^{1/2}
\left(
\int_0^\pi
|\partial_sg_\lambda|^2\sin^5\theta\,d\theta
\right)^{1/2}
\leq
C\lambda X.
$$
Dividing \eqref{lambdaprimebound} by $\lambda$ yields
$$
|s'|
\leq
C\left(
\lambda^2
+
\lambda^{-1}X
+
\lambda^{-1}X(E^{1/2}+E)
\right).
$$
Since $E\leq\varepsilon$,
$$
|s'|
\left|
\langle v,\partial_sg_\lambda\rangle_w
\right|
\leq
C\lambda^3X+CX^2.
$$
The $X^2$-term is integrable by
\eqref{integrateddissdrift}, while
$$
\int_0^T\lambda^3X\,dt
\leq
C\lambda_0^3T^{1/2}
\left(\int_0^TX^2\,dt\right)^{1/2}
=o(1).
$$
Thus
$$
\int_0^T
|s'|
\left|
\langle v,\partial_sg_\lambda\rangle_w
\right|dt
=o(1).
$$
It remains to control the small normalization error. By
Proposition~\ref{norms},
$$
\left|
1-\frac{3\|\tau_\lambda\|_{L_w^2}^2}{16}
\right|
\leq
C\lambda^2\log\frac1\lambda.
$$
Using the preceding estimate for $s'$ and
$\lambda(t)\asymp\lambda_0$, we obtain
$$
\begin{aligned}
\int_0^T
|s'|
\left|
1-\frac{3\|\tau_\lambda\|_{L_w^2}^2}{16}
\right|dt
&\leq
C\lambda_0^4\log\frac1{\lambda_0}\,T\\
&\quad+
C\lambda_0\log\frac1{\lambda_0}\int_0^TX\,dt\\
&\quad+
C\lambda_0\log\frac1{\lambda_0}
\int_0^TX(E^{1/2}+E)\,dt.
\end{aligned}
$$
The first term tends to zero because
$T\leq K|\log\lambda_0|$. For the second,
Cauchy--Schwarz gives
$$
\lambda_0\log\frac1{\lambda_0}\int_0^TX\,dt
\leq
\lambda_0\log\frac1{\lambda_0}\,
T^{1/2}
\left(\int_0^TX^2\,dt\right)^{1/2}
=o(1).
$$
The last term is bounded in the same way, since
$E^{1/2}+E\leq C\varepsilon^{1/2}$. Hence the normalization-error
term in \eqref{sprime} also has integral $o(1)$.
Finally, Lemma~\ref{tubeee_eta} and the bound recorded after
\eqref{gests} imply
$$
|\langle v(t),g_{\lambda(t)}\rangle_w|
\leq
\|v(t)\|_{L^\infty}
\|g_{\lambda(t)}\|_{L_w^1}
\leq
C\lambda(t)^\delta
\left|\log\lambda(t)\right|
=o(1)
$$
uniformly on $[0,T]$.
Integrating \eqref{sprime} from $0$ to any
$t\leq T$, every term on the right is $o(1)$, including the boundary term
$$
\langle v(t),g_{\lambda(t)}\rangle_w
-
\langle v(0),g_{\lambda_0}\rangle_w.
$$
Therefore $\sup_{0\leq t\leq T}|s(t)-s(0)|=o(1)$. Exponentiating gives
$$
\lambda(t)=e^{o(1)}\lambda_0=(1+o(1))\lambda_0
$$
uniformly on $[0,T]$.
\end{proof}

\begin{theorem}\label{eventualdropthm}
Let
$$
a_0(\theta)
=
u_{\lambda_0}(\theta)
+
\lambda_0^\alpha\psi_{\lambda_0}(\theta)
$$
be the initial data constructed in
Lemma~\ref{perturbboundlem} with $\psi_{\lambda_0}$ from Remark \ref{perturbationcutoff}. Let $a(\theta,t)$ denote
the corresponding solution. Then, for all
sufficiently small $\lambda_0>0$, there exists $t_*>0$ such that
$$
\sup_{\theta\in[0,\pi]}a(\theta,t_*)<1.
$$
\end{theorem}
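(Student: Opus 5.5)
The plan is to run a bootstrap inside the tubular neighbourhood of Lemma~\ref{tubeee_eta} over a time window of length $T_0:=K|\log\lambda_0|$. On this window the error $v$ decays to size $O(\lambda_0^3)$ while the scale $\lambda(t)$ barely moves. The conclusion then follows from the explicit maximum of $u_\lambda$: at $\theta=\pi/2$ (so $r=1$) one has
$$
\max u_\lambda=\frac{1-\lambda^2}{1+\lambda^2}=1-2\lambda^2+O(\lambda^4),
$$
so any error of size $o(\lambda^2)$ leaves the solution strictly below $1$.

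\emph{Setup.} Fix $\delta\in(0,\alpha)$. By Remark~\ref{initialscalerem}, $\lambda(0)=\lambda_0$ and $v(\cdot,0)=\lambda_0^\alpha\psi_{\lambda_0}$, which is symmetric. By Remark~\ref{perturbationcutoff} and \eqref{H1identity}, and since $W_\lambda$ is bounded,
$$
E(0)\le C\|v(0)\|_{\mathcal H}^2\le C\lambda_0^{2\alpha}|\log\lambda_0|=o(1).
$$
In particular $\|v(0)\|_{L^\infty}=\lambda_0^\alpha<\lambda_0^\delta$, so $a_0\in\mathcal U_{\lambda_*,\delta}$. The flow preserves symmetry, so $v(t)$ is symmetric and orthogonal to $\eta_{\lambda(t)}$, and Lemmas~\ref{etacoercivity}, \ref{weightedAlambda} and Corollary~\ref{OGquaderror} apply for $t>0$.

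\emph{Bootstrap.} Let $T_1\le T_0$ be the supremum of times $t$ such that on $[0,t]$ the following hold: $a(\cdot,\cdot)\in\mathcal U_{\lambda_*,\delta}$, $E\le\varepsilon$, and $\tfrac12\lambda_0\le\lambda\le2\lambda_0$. On $[0,T_1)$ I would improve each condition strictly.
\begin{itemize}
\item Remark~\ref{energydecayremark} gives $E(t)\le E(0)+C\lambda_0^6=o(1)$, which is well below $\varepsilon$.
\item Lemma~\ref{logscaledrift}, applicable because $T_1\le K|\log\lambda_0|$ and $E(0)=o(1)$, gives $\lambda(t)=(1+o(1))\lambda_0$. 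This is well inside the factor-$2$ window.
\item The Sobolev remark after Lemma~\ref{etacoercivity} gives $\|v(t)\|_{L^\infty}\le CE(t)^{1/2}\le C\lambda_0^{\alpha}|\log\lambda_0|^{1/2}+C\lambda_0^3$. Since $\delta<\alpha$, this is $<\tfrac12\lambda(t)^\delta$. Hence $\|a-u_{\lambda(t)}\|_{L^\infty}<\lambda(t)^\delta$ with $\lambda(t)\le\lambda_*/2$, so $a(t)$ stays in the tube.
\end{itemize}
By continuity of $E$, $\lambda$ and $a$ in $L^\infty$, none of the conditions can fail at $T_1<T_0$. The tubular interval and all hypotheses therefore persist up to $T_0$.

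\emph{Conclusion.} At $t_*:=T_0$, \eqref{Edecay} gives
$$
E(t_*)\le e^{-c_{\mathrm{decay}}K|\log\lambda_0|}E(0)+C\lambda_0^6\le \lambda_0^{c_{\mathrm{decay}}K}+C\lambda_0^6.
$$
Choosing $K\ge 6/c_{\mathrm{decay}}$, this yields $\|v(t_*)\|_{L^\infty}\le C\lambda_0^3$. Combined with $\lambda(t_*)=(1+o(1))\lambda_0$, we get
$$
\sup a(\cdot,t_*)\le \max u_{\lambda(t_*)}+C\lambda_0^3\le 1-2(1+o(1))\lambda_0^2+C\lambda_0^3<1
$$
for $\lambda_0$ small.

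\emph{Main obstacle.} The delicate point is the consistency of the bootstrap, namely ensuring that every hypothesis of Theorem~\ref{errordecaythrm} and Lemma~\ref{logscaledrift} is available on the whole window. This requires the choice $\delta<\alpha$, so that the initial perturbation of height $\lambda_0^\alpha$ lies inside the tube. It also requires justifying the energy computations near $t=0$, where $v(0)$ lies only in $\mathcal H$ and not in $\mathcal D(\mathcal A_\lambda)$; as in the proof of Theorem~\ref{errordecaythrm}, I would handle this by integrating on $[\tau,t]$ and letting $\tau\downarrow0$. A secondary check is that the $o(1)$ in Lemma~\ref{logscaledrift} is uniform once $K$ is fixed, which that lemma states.
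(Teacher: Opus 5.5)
Your proposal is correct and follows essentially the same bootstrap as the paper. It uses the same choice $\delta<\alpha$, the window $t_*=\frac{6}{c_{\mathrm{decay}}}|\log\lambda_0|$, the improvement of the energy, scale and $L^\infty$ bounds via Remark~\ref{energydecayremark}, Lemma~\ref{logscaledrift} and the Sobolev remark, and the conclusion from $\max u_\lambda=\frac{1-\lambda^2}{1+\lambda^2}$. The only difference is bookkeeping: you put tube membership into the bootstrap, while the paper bootstraps $\|v\|_{L^\infty}<\lambda^\delta$ and then excludes $T\le t_*$ by passing to a subsequence $\lambda(t_n)\to\bar\lambda$ and using openness of $\mathcal U_{\lambda_*,\delta}$. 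That limiting argument is exactly what your ``by continuity'' step at $T_1$ requires, since $\lambda$ is only defined inside the tube.
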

\begin{proof}
Choose
$$
0<\delta<\alpha.
$$
Set
$$
t_*:=\frac{6}{c_{\mathrm{decay}}}\left|\log\lambda_0\right|,
$$
where $c_{\mathrm{decay}}$ is the constant from
Remark~\ref{energydecayremark}.
By Remark~\ref{initialscalerem},
$$
\lambda(0)=\lambda_0,
\qquad
v(\theta,0)
=
\lambda_0^\alpha\psi_{\lambda_0}(\theta).
$$
Moreover, Remark~\ref{perturbationcutoff} gives
$$
\|v(\cdot,0)\|_{\mathcal H}^2
\leq
C\lambda_0^{2\alpha}
\left|\log\lambda_0\right|.
$$
Since $\mathcal E_{\lambda_0}$ is bounded
above by $C\|\cdot\|_{\mathcal H}^2$,
\begin{equation}\label{initialenergy}
E(0)
\leq
C\lambda_0^{2\alpha}
\left|\log\lambda_0\right|
=
o(1).
\end{equation}
In particular, since $\delta<\alpha$, the initial condition belongs
to $\mathcal U_{\lambda_*,\delta}$ for all sufficiently small
$\lambda_0$.
Let $[0,T)$ be the maximal tubular interval from
Lemma~\ref{tubeee_eta}. We decrease $\lambda_0$ so that $2\lambda_0<\frac{\lambda_*}{2}$. Define $\kappa$ to be the supremum of the times
$t<\min\{T,t_*\}$ for which, throughout $[0,t]$,
$$
\frac12\lambda_0<\lambda(\tau)<2\lambda_0,
\qquad
E(\tau)<\varepsilon,
\qquad
\|v(\cdot,\tau)\|_{L^\infty}<\lambda(\tau)^\delta.
$$
Here $\varepsilon$ is sufficiently small for 
Theorem~\ref{errordecaythrm} and Lemma \ref{logscaledrift}. By
\eqref{initialenergy}, $\lambda(0)=\lambda_0$, and
$\delta<\alpha$, these inequalities hold on a non-empty initial
interval.
For $t<\kappa$, \eqref{Edecay} gives
$$
E(t)
\leq
e^{-c_{\mathrm{decay}} t}E(0)
+
C\sup_{0\leq \tau\leq t}\lambda(\tau)^6
\leq
e^{-c_{\mathrm{decay}} t}E(0)+C\lambda_0^6.
$$
Thus $E(t)=o(1)$ uniformly for $t<\kappa$, and hence, after
decreasing $\lambda_0$ we have $E(t)\leq\frac{\varepsilon}{2}$. We next control the scale. For every $t<\kappa$, the hypotheses of
Lemma~\ref{logscaledrift} hold on $[0,t]$, with the
fixed constant $K=6/c_{\mathrm{decay}}$. Therefore $\lambda(t)=(1+o(1))\lambda_0$ uniformly for $t<\kappa$. In particular, for sufficiently small
$\lambda_0$,
$$
\frac34\lambda_0
\leq
\lambda(t)
\leq
\frac32\lambda_0.
$$
Finally, coercivity and Sobolev embedding give $\|v(\cdot,t)\|_{L^\infty}
\leq
CE(t)^{1/2}$. Using the preceding energy estimate and
\eqref{initialenergy},
$$
\|v(\cdot,t)\|_{L^\infty}
\leq
Ce^{-c_{\mathrm{decay}} t/2}
\lambda_0^\alpha
\left|\log\lambda_0\right|^{1/2}
+
C\lambda_0^3.
$$
Since $\delta<\alpha<2<3$, the right-hand side is
$o(\lambda_0^\delta)$, uniformly for $t<\kappa$. As
$\lambda(t)\asymp\lambda_0$, it follows that
$$
\|v(\cdot,t)\|_{L^\infty}
\leq
\frac12\lambda(t)^\delta
$$
for sufficiently small $\lambda_0$.
All three bootstrap inequalities have therefore been improved by a
fixed margin. Hence $\kappa=\min\{T,t_*\}$. We claim that $T>t_*$. Suppose instead that $T\leq t_*$. Choose
$t_n\uparrow T$. The improved scale bounds allow us, after passing
to a subsequence, to assume that
$$
\lambda(t_n)\longrightarrow\bar\lambda
\in
\left[\frac34\lambda_0,\frac32\lambda_0\right]
\subset
\left(0,\frac{\lambda_*}{2}\right].
$$
The solution exists smoothly beyond $T$, and hence
$a(\cdot,t_n)\to a(\cdot,T)$ in $L^\infty$. Since
$$
a(\cdot,t_n)
=
u_{\lambda(t_n)}+v(\cdot,t_n),
\qquad
\|v(\cdot,t_n)\|_{L^\infty}
\leq
\frac12\lambda(t_n)^\delta,
$$
passing to the limit gives
$$
\|a(\cdot,T)-u_{\bar\lambda}\|_{L^\infty}
\leq
\frac12\bar\lambda^\delta
<
\bar\lambda^\delta.
$$
Thus $a(\cdot,T)\in\mathcal U_{\lambda_*,\delta}$. This set is open
in $L^\infty$, so continuity of the flow implies that
$a(\cdot,t)\in\mathcal U_{\lambda_*,\delta}$ for times slightly
larger than $T$, contradicting the maximality of the tubular
interval. Therefore $T>t_*$ and $\kappa=t_*$.

At time $t_*$, the energy estimate gives
$$
\begin{aligned}
E(t_*)
&\leq
e^{-c_{\mathrm{decay}} t_*}E(0)+C\lambda_0^6\\
&=
\lambda_0^6E(0)+C\lambda_0^6
=
O(\lambda_0^6).
\end{aligned}
$$
Consequently,
$$
\|v(\cdot,t_*)\|_{L^\infty}
\leq
CE(t_*)^{1/2}
\leq
C\lambda_0^3.
$$ 
Lemma \ref{logscaledrift} also gives
$$
\lambda(t_*)=(1+o(1))\lambda_0.
$$
The profile $u_\lambda$ is maximized at the equator, where
$$
\max_{\theta\in[0,\pi]}u_\lambda(\theta)
=
u_\lambda\left(\frac{\pi}{2}\right)
=
\frac{1-\lambda^2}{1+\lambda^2}.
$$
It follows that
$$
\max_{\theta\in[0,\pi]}u_{\lambda(t_*)}(\theta)
=
1-2\lambda_0^2+o(\lambda_0^2).
$$
Therefore
$$
\begin{aligned}
\sup_{\theta\in[0,\pi]}a(\theta,t_*)
&\leq
\max_{\theta\in[0,\pi]}u_{\lambda(t_*)}(\theta)
+
\|v(\cdot,t_*)\|_{L^\infty}\\
&=
1-2\lambda_0^2+o(\lambda_0^2)
<
1
\end{aligned}
$$
for all sufficiently small $\lambda_0$.
\end{proof}

\section*{Acknowledgements}

I am grateful to Jason Lotay and Andrew Dancer for many helpful discussions,
and for their careful reading of and valuable comments on the manuscript.

\bigskip

\bibliographystyle{amsplain} 
\bibliography{refs}

\end{document}